\numberwithin{equation}{section}
\newtheorem{theorem}[subsection]{Theorem}
\newtheorem{corollary}[subsection]{Corollary}
\newtheorem{lemma}[subsection]{Lemma}
\newtheorem{proposition}[subsection]{Proposition}
\theoremstyle{definition}
\newtheorem{definition}[subsection]{Definition}
\newtheorem{remark}[subsection]{Remark}
\newcommand{\bS}{\mathbb{S}}
\newcommand{\cB}{\mathcal{B}}
\newcommand{\cI}{\mathcal{I}}
\newcommand{\cL}{\mathcal{L}}
\newcommand{\cN}{\mathcal{N}}
\newcommand{\cT}{\mathcal{T}}
\DeclareMathOperator{\hocolim}{hocolim}
\DeclareMathOperator*{\hocolimsubscr}{hocolim}
\DeclareMathOperator{\holim}{holim}
\DeclareMathOperator{\colim}{colim}
\DeclareMathOperator{\sd}{sd}
\DeclareMathOperator{\THH}{THH}
\DeclareMathOperator{\TC}{TC}
\newcommand{\ot}{\leftarrow}
\newcommand{\op}{{\mathrm{op}}}
\newcommand{\id}{{\mathrm{id}}}
\newcommand{\sm}{\wedge}
\newcommand{\sma}{\wedge}
\newcommand{\barsmash}{\barwedge}
\newcommand{\iso}{\cong}
\newcommand{\concat}{\sqcup}
\newcommand{\bld}[1]{{\mathbf{#1}}}
\newcommand{\Spsym}{\mathrm{Sp}^{\Sigma}}
\newcommand{\SpO}{\mathrm{Sp}^{O}}
\newcommand{\sh}{\mathrm{sh}}
\newcommand{\R}{\mathbb{R}}
\newcommand{\Sph}{\mathbb{S}}
\newcommand{\arxivlink}[1]{\href{http://arxiv.org/abs/#1}{\texttt{arXiv:#1}}}
  \newcommand{\authorcomment}[1]{{\marginpar{\hspace{0.2\marginparwidth}\rule{0.6\marginparwidth}{0.75mm}\hspace{0.2\marginparwidth}}\noindent\bfseries[#1]}}
  \newcommand{\authorcomment}[1]{}
\title[Comparing cyclotomic structures on different models for \texorpdfstring{$\THH$}{THH}]{Comparing cyclotomic structures on different models for topological Hochschild homology}
\author[E. Dotto]{Emanuele Dotto} \address{Mathematical Institute, University of Bonn, 
Germany
} 
\email{dotto@math.uni-bonn.de}
\author[C. Malkiewich]{Cary Malkiewich} \address{Department of Mathematics, Binghamton University, 
USA
} 
\email{malkiewich@math.binghamton.edu}
\author[I. Patchkoria]{Irakli Patchkoria} \address{Department of Mathematics, University of Aberdeen, UK}
\email{irakli.patchkoria@abdn.ac.uk}
\author[S. Sagave]{Steffen Sagave} \address{IMAPP, Radboud University Nijmegen, 
The Netherlands
} \email{s.sagave@math.ru.nl}
\author[C. Woo]{Calvin Woo} \address{Department of Mathematics, Indiana University, 
USA
}
\email{calwoo@indiana.edu}
\date{\today}
\keywords{Topological cyclic homology, cyclotomic spectrum, orthogonal $G$-spectrum}
\subjclass[2010]{19D55; 55Q91, 55P43}
\begin{document}
\begin{abstract}
  The topological Hochschild homology $\THH(A)$ of an orthogonal ring
  spectrum $A$ can be defined by evaluating the cyclic bar
  construction on $A$ or by applying B\"okstedt's original definition
  of $\THH$ to $A$. In this paper, we construct a chain of stable
  equivalences of cyclotomic spectra comparing these two models for
  $\THH(A)$. This implies that the two versions of topological cyclic
  homology resulting from these variants of $\THH(A)$ are equivalent.
\end{abstract}
\maketitle
\section{Introduction}
Topological cyclic homology $(\TC)$ was introduced by B\"okstedt,
Hsiang and Madsen in the influential paper~\cite{BHM93}. It is the
target of a cyclotomic trace map from algebraic $K$-theory, and the
study of $\TC$ and the cyclotomic trace map has led to many successful
computations in algebraic $K$-theory (see
e.g.~\cite{HM97}). In~\cite{BHM93}, $\TC$ was defined for
so-called \textit{functors with smash products} $F$. The first step of
the definition is to form B\"okstedt's topological Hochschild
homology $\THH(F)$~\cite{Boekstedt_THH}. This is the realization of a cyclic object, which is at each simplicial level a homotopy colimit of loop spaces of smash products of the values of $F$. The $S^1$-spectrum $\THH(F)$ admits
certain ``cyclotomic'' structure maps. These give rise to a diagram of
fixed point spectra of $\THH(F)$, and $\TC(F)$ is defined to be the
homotopy limit of this diagram.

Soon after the invention of $\TC$, categories of spectra with a
structured smash product were introduced, including the categories of symmetric \cite{HSS00} and orthogonal spectra~\cite{MMSS}. In these categories, functors with smash
product can be expressed as ring spectra, which are by definition
monoids with respect to the smash product. This viewpoint allows for a
conceptually simpler definition of topological Hochschild homology:
Under a mild cofibrancy hypothesis on a ring spectrum $A$, it can be
defined to be the cyclic bar construction $B^{\mathrm{cy}}(A)$. This
is the realization of a cyclic object given in degree $q$ by $A^{\sm
  (q+1)}$, the $(q+1)$-fold power of $A$ with respect to the smash
product.

For many years, it was not known how to equip the cyclic bar
construction $B^{\mathrm{cy}}(A)$ with a cyclotomic structure so that
it can be used as a basis for the definition of $\TC(A)$. This
situation changed through progress on the understanding of
equivariant orthogonal spectra made in recent years: Based on results about norm
constructions and geometric fixed points obtained by Hill, Hopkins,
and Ravenel \cite{HHR}, Angeltveit et al.~\cite{ABG15} defined a
cyclotomic structure on the cyclic bar construction
$B^{\mathrm{cy}}(A)$ of a sufficiently cofibrant orthogonal ring
spectrum $A$.  Using a fibrant replacement functor introduced by
Blumberg and Mandell \cite{BM16}, this cyclotomic structure leads to a
definition of $\TC(A)$. Independent of this, Stolz~\cite{Stolz_equivariant} studied equivariant
orthogonal spectra by implementing a flat (or~$\mathbb S$-) model
structure. Employing a fibrant replacement functor from Kro's thesis
\cite[\S 5.1]{Kro_involutions}, Stolz' work also leads to a
construction of $\TC(A)$ based on the cyclic bar construction, see
\cite[Section 3.6.1]{Stolz_equivariant} and \cite[Section
4.6.1]{BDS16}.

The first main result of the present paper is that the cyclotomic
spectrum coming from $B^{\mathrm{cy}}(A)$ is equivalent to the cyclotomic spectrum $\THH(A)$ considered
in~\cite{BHM93}: 
\begin{theorem}\label{thm:comparison-in-introduction}
Let $A$ be a flat orthogonal ring spectrum. 
Then there is a chain of stable equivalences of cyclotomic spectra 
\[ B^{\mathrm{cy}}(A) \to  \mathrm{thh}(\sh{}A) 
\ot \THH(A)\] 
relating the cyclic bar construction and B\"okstedt's model for $\THH$.
\end{theorem}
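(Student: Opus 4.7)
The plan is to identify $\mathrm{thh}(\sh{}A)$ as B\"okstedt's construction applied to the once-shifted orthogonal ring spectrum $\sh{}A$, and to build the two comparison maps first as cyclic orthogonal spectra before upgrading them to maps of cyclotomic spectra. Passing through $\sh{}A$ is essential: the structure maps of the shift encode the $O(n)$-equivariance needed to realize the cyclic bar construction as the value at the smallest object of a B\"okstedt-type homotopy colimit indexed by $\cI^{q+1}$ (or $\cJ^{q+1}$), so that one expects a natural map \emph{into} $\mathrm{thh}(\sh{}A)$ from both sides rather than a single direct comparison.

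First I would construct the two maps cyclic-degree-wise. In simplicial degree $q$, $B^{\mathrm{cy}}(A)_q = A^{\sm (q+1)}$ maps to $\mathrm{thh}(\sh{}A)_q$ via the assembly map at the initial object of the indexing category, combined with the canonical morphism $A \to \sh{}A$ in each smash factor. On the other side, $\THH(A)_q \to \mathrm{thh}(\sh{}A)_q$ is induced by the orthogonal structure maps of $A$ promoting $A_n$ to $(\sh{}A)_n = A_{n+1}$ levelwise. Both maps are cyclic by naturality. Under the flatness hypothesis, I would then show each is a stable equivalence of underlying orthogonal spectra by a cofinality-and-convergence argument in B\"okstedt's style: the homotopy colimit is computable on a cofinal subcategory, and flatness of $A$ ensures that the smash powers have the correct homotopy type, so that passing from $A$ to $\sh{}A$ or from the initial-object assembly to the full hocolim produces a level equivalence. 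Realization of cyclic objects then preserves these equivalences, using the good cofibrancy properties of the terms.

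The hard part will be lifting this chain to cyclotomic equivalences. On $B^{\mathrm{cy}}(A)$ the cyclotomic structure comes from the Angeltveit-Blumberg-Gepner/Stolz construction via genuine norms and Hill-Hopkins-Ravenel geometric fixed points; on $\THH(A)$ it comes from B\"okstedt's original $r$-fold cyclic subdivision together with an identification of $C_r$-fixed points inside a homotopy colimit over $\cI^{r(q+1)}$. I would construct an explicit point-set cyclotomic structure on $\mathrm{thh}(\sh{}A)$ --- essentially the subdivision trick applied to the shifted ring spectrum --- and verify that both comparison maps intertwine the respective $\rho_r^*\Phi^{C_r}$ maps. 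The principal technical obstacle I expect is matching, at the point-set level, the $C_r$-geometric fixed points of the B\"okstedt homotopy colimit (computed via cofinality of the diagonal $\cI \to \cI^r$ and the fixed-point identification on smash powers) with the ABG/Stolz $\Phi^{C_r}$ applied to smash powers of $\sh{}A$. This identification, carried out coherently in the cyclic direction and compatibly with the cyclic operators $\rho_r$, is where the bulk of the work of the proof should live.
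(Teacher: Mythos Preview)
Your overall architecture is exactly the paper's: build both maps at the level of cyclic objects, prove levelwise stable equivalences, realize, then construct a cyclotomic structure on the middle term and check both maps are compatible with it. But there is a genuine misreading of the middle object that would derail the details.

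You treat $\sh{}A$ as the \emph{once-shifted orthogonal ring spectrum}, writing $(\sh{}A)_n = A_{n+1}$. In the paper $\sh{}A$ is a ring \emph{bispectrum}: a symmetric spectrum object in $\SpO$ whose $n$-th symmetric level is the orthogonal spectrum $\sh^n A = A(\mathbb R^n \oplus -)$. Accordingly $\mathrm{thh}_q(\sh{}A)$ is the homotopy colimit over $\cI^{\times(q+1)}$ of the orthogonal spectra $\Omega^{n_0+\cdots+n_q}(\sh^{n_0}A \sma \cdots \sma \sh^{n_q}A)$. With this in hand the two maps are cleaner than you describe: the left map is just inclusion at $(\bld{0},\ldots,\bld{0})$ (no extra ``$A \to \sh A$'' is needed, since $(\sh A)^0 = A$), and the right map is induced termwise by the map of ring bispectra $\Sigma^\infty A \to \sh{}A$, i.e.\ by the orthogonal structure maps $\Sigma^\infty A_{n_i} \to \sh^{n_i}A$, not by a single shift $A_n \to A_{n+1}$.

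This distinction matters for the equivalence arguments. Your ``cofinality-and-convergence in B\"okstedt's style'' is not what the paper does, and would not obviously work here: the B\"okstedt approximation lemma needs connectivity hypotheses you have not assumed. Instead, the left map is an equivalence because $\cI^{\times(q+1)}$ has contractible nerve and every structure map in the diagram $\Omega^{n_0+\cdots+n_q}(\sh^{n_0}A \sma \cdots \sma \sh^{n_q}A)$ is already a $\pi_*$-isomorphism of orthogonal spectra. That last fact leans on the key technical input you do not mention: the $n$-fold shift $\sh^n A$ of a flat orthogonal spectrum is again flat, so these iterated smash products are homotopically meaningful. The right map is an equivalence because $\Sigma^\infty A \to \sh{}A$ is a $\pi_*$-isomorphism of bispectra and $\mathrm{thh}_q$ is homotopy invariant under such maps when all symmetric levels are flat. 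The same flatness-of-shifts result is what makes the diagonal isomorphism $X \xrightarrow{\cong} \Phi^{C_r}(X^{\sma r})$ available for $X = \sh^{n_0}A \sma \cdots \sma \sh^{n_q}A$, which is the heart of the cyclotomic structure on $\mathrm{thh}(\sh{}A)$ and of the compatibility check on the left. So your instinct about where the work lives is right, but the missing idea is the bispectrum framework together with preservation of flatness under shift.
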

In the theorem, being flat is a mild cofibrancy assumption on the underlying spectrum that is in particular satisfied by cofibrant associative and cofibrant commutative orthogonal ring spectra.

We will explain briefly the relationship between this comparison and earlier work. On the level of topological Hochschild homology, $B^{\mathrm{cy}}(A)$
has been compared to B\"okstedt's $\THH(A)$ by Shipley
\cite{MR1740756}, and one step in the argument has recently been
corrected in~\cite{MR3513565}. However, the intermediate steps in this
comparison do not appear to admit cyclotomic structures, and thus this
comparison does not provide an equivalence of cyclotomic spectra.  We note that as a byproduct, our theorem gives a direct comparison of the different
models for topological Hochschild homology, different from the one
carried out by Shipley~\cite{MR1740756}. The fact that we work with
orthogonal rather than symmetric spectra is no limitation since the
corresponding categories of ring spectra are Quillen
equivalent.

Angeltveit et al.~\cite{MR3556283} also give evidence that $B^{\mathrm{cy}}(A)$ and $\THH(A)$ should agree as cyclotomic spectra, by showing that the equivariant homotopy types of the individual levels of the two cyclic spectra are equivalent; however these equivalences are not compatible with the cyclic structure maps.  Our comparison theorem also gives a different proof of this main result of~\cite{MR3556283}. Finally, in the special case where $A$ is commutative, a comparison is described by Brun, Dundas, and Stolz \cite[\S 4.5.19]{BDS16} between the Loday functor modeled on smash products of orthogonal spectra and the Loday functor in $\Gamma$-spaces modeled on B\" okstedt smash products from the paper of Brun, Carlsson and Dundas \cite{BCD}. They point out that the behavior of the genuine fixed points is the same, but highlight some of the remaining work needed to make the comparison between the genuine fixed points.

Nikolaus and Scholze \cite{NS17} recently introduced a new and conceptually simpler definition of cyclotomic spectra and constructed a cyclotomic structure on topological Hochschild homology. In \cite[Theorem III.6.1 and Corollary III.6.8]{NS17}, they show that their approach provides a cyclotomic spectrum that is equivalent to the cyclotomic spectrum $\THH(A)$ considered in~\cite{BHM93}. 
However, the paper \cite{NS17} does not give a comparison to the cyclotomic spectrum $B^{\mathrm{cy}}(A)$ of \cite{ABG15}. Combining the comparison results of \cite{NS17} with our Theorem~\ref{thm:comparison-in-introduction} thus shows that the Nikolaus--Scholze model for the cyclotomic spectrum $\THH(A)$ is equivalent to the cyclotomic spectrum $B^{\mathrm{cy}}(A)$.

The proof of Theorem~\ref{thm:comparison-in-introduction} may be
summarized as follows. At simplicial level $q \geq 0$ the chain of equivalences in Theorem~\ref{thm:comparison-in-introduction} takes the form
\begin{multline} A^{\sma (q+1)}\label{eq:levelwise-zigzag-intro}
\xrightarrow{\sim} \hocolimsubscr_{(\bld{n_0},\ldots,\bld{n_q}) \in \cI^{\times (q+1)}} \Omega^{n_0+\ldots+n_q} (\sh^{n_0}A \sma \ldots \sma \sh^{n_q}A)
\\ \xleftarrow{\sim} \hocolimsubscr_{(\bld{n_0},\ldots,\bld{n_q}) \in \cI^{\times (q+1)}} \Omega^{n_0+\ldots+n_q} \Sigma^\infty (A_{n_0} \sma \ldots \sma A_{n_q}). \end{multline}
The middle term has a well-defined homotopy type, by a relatively new technical result~\cite{DMPP} that the shift functor $\sh$ for orthogonal spectra preserves the property of being flat. As $q$ varies, these equivalences fit into a zig-zag of maps of cyclic objects, which are good in the appropriate sense so that the realizations are also equivalent. For the bookkeeping of structure maps, it seems easiest to regard the middle and right-hand terms as special cases of a more general construction. It takes as input a ring object $E$ in symmetric-orthogonal bispectra, and returns as output a cyclic orthogonal spectrum $\mathrm{thh}_\bullet(E)$. We also use this general framework to define a cyclotomic structure on $|\mathrm{thh}_\bullet(E)|$ and therefore on the terms in the above zig-zag. Finally, we show that the maps of the zig-zag respect the cyclotomic structure and give equivariant equivalences on each simplicial level.

\subsection{Topological cyclic homology}
To construct the topological cyclic homology from the cyclotomic spectrum $B^{\mathrm{cy}}(A)$ for
a sufficiently cofibrant orthogonal ring spectrum $A$, it is necessary to first replace $B^{\mathrm{cy}}(A)$
by a fibrant cyclotomic spectrum $B^{\mathrm{cy}}(A)^{\mathrm{fib}}$ to ensure that the $C_{p^n}$-fixed points appearing
in the definition of $\TC$ capture a well-defined homotopy type. This can be achieved by using the fibrant replacement functor for the model${}^*$ structure on cyclotomic spectra of ~\cite{BM16}. 

\begin{theorem}\label{thm:TC-comparison-intro} Let $A$ be a flat orthogonal ring spectrum. For every prime $p$, there is a chain of maps  
\[\TC((B^{\mathrm{cy}}(A))^{\mathrm{fib}};p) \xrightarrow{\sim}  \TC((\mathrm{thh}(\sh{}A))^{\mathrm{fib}};p) \xleftarrow{\sim} \TC((\THH(A))^{\mathrm{fib}};p) \ot \TC(A;p) \]
such that the first two maps are stable equivalences. If $A$ is in addition
strictly connective, then the last map is also a stable equivalence.  

An analogous statement holds for Goodwillie's integral $\TC$.
\end{theorem}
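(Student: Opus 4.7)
The plan is to deduce Theorem~\ref{thm:TC-comparison-intro} from Theorem~\ref{thm:comparison-in-introduction} by applying $\TC(-;p)$ after fibrant replacement, and to produce the last map via the unit of the fibrant replacement functor.

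For the first two arrows I would apply a functorial fibrant replacement for the model${}^*$ structure on cyclotomic spectra from~\cite{BM16} to each term of the zig-zag
\[B^{\mathrm{cy}}(A)\xrightarrow{\sim}\mathrm{thh}(\sh A)\xleftarrow{\sim}\THH(A)\]
supplied by Theorem~\ref{thm:comparison-in-introduction}. This yields a zig-zag of stable equivalences of fibrant cyclotomic spectra. On a fibrant cyclotomic spectrum every $C_{p^n}$-fixed point spectrum has the correct derived homotopy type, so such a stable equivalence induces weak equivalences on all $C_{p^n}$-fixed points, compatibly with the Restriction and Frobenius maps. Since $\TC(-;p)$ is by definition the homotopy limit of the diagram assembled from these fixed points under $R$ and $F$, it sends the zig-zag to a zig-zag of stable equivalences.

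For the third arrow I would take the map induced by the unit $\THH(A)\to\THH(A)^{\mathrm{fib}}$: stagewise it compares the naive point-set $C_{p^n}$-fixed points of B\"okstedt's $\THH(A)$, which are what feed into the original BHM definition of $\TC(A;p)$, with the derived $C_{p^n}$-fixed points of the fibrant replacement. Under the hypothesis that $A$ is strictly connective, B\"okstedt's approximation lemma implies that the naive fixed points of $\THH(A)$ already model the correct equivariant homotopy type at each stage of the $R$- and $F$-tower; consequently, the comparison map is a levelwise weak equivalence and induces a stable equivalence on homotopy limits. The main technical obstacle I anticipate is showing that this naive-to-derived identification is compatible with the Restriction maps (which involve geometric fixed points and are constructed in a model-specific way on B\"okstedt's $\THH$) before passing to the homotopy limit; without this compatibility one cannot compare the two $\TC$ towers stagewise.

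The statement for Goodwillie's integral $\TC$ follows by the same argument applied to the enlarged indexing diagram over all natural numbers rather than only the powers of a fixed prime $p$: both the invariance of $\TC$ under stable equivalences of fibrant cyclotomic spectra and the connectivity-based identification of naive and derived fixed points are pointwise statements that pass through the larger homotopy limit unchanged.
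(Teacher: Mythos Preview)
Your treatment of the $p$-typical part is essentially the paper's: apply the functorial fibrant replacement from~\cite{BM16} to the zig-zag of Theorem~\ref{thm:comparison-in-introduction}, use that $\TC(-;p)$ is a homotopy limit of genuine fixed points and hence homotopy invariant on fibrant objects, and for the last map invoke strict connectivity via~\cite[Proposition~2.4]{HM97} to see that $\THH(A)$ is already a $C_{p^n}$-$\Omega$-spectrum, so the unit into its fibrant replacement induces an equivalence on $\TC$. You also correctly isolate the compatibility of the two constructions of the Restriction maps as the substantive remaining point; the paper devotes a separate proposition (Proposition~\ref{prop:compatible-R-maps}) to exactly this.

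The integral-$\TC$ paragraph contains a genuine gap. Goodwillie's integral $\TC$ is \emph{not} the homotopy limit over ``the enlarged indexing diagram over all natural numbers''; the homotopy limit over~$\mathbb I$ is a different object and does not in general agree with Goodwillie's $\TC$ before profinite completion. The paper uses the definition from~\cite[\S 6.4.3]{DGM13}: $\TC(X)$ is the homotopy pullback of
\[
X^{hS^1}\longrightarrow\prod_p\bigl(\holim_F X^{hC_{p^n}}\bigr)^{\wedge}_p\longleftarrow\prod_p\TC(X;p)^{\wedge}_p,
\]
and then observes that this diagram is natural in morphisms of cyclotomic spectra, so the already-proved $p$-typical equivalences, together with the evident equivalences on the homotopy-fixed-point terms, give equivalences of pullbacks. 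Your ``pointwise statements pass through the larger homotopy limit'' reasoning addresses the wrong construction: it would only prove something about $\holim_{\mathbb I}$, and you have not argued that this agrees with Goodwillie's $\TC$. To fix the argument you need to work with the pullback description and check the $X^{hS^1}$ vertex and the maps into the $p$-completions separately.
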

In the theorem, the first three instances of $\TC$ take as input cyclotomic
spectra whose cyclotomic structure maps involve geometric fixed points, 
while the last instance is the classical $\TC(A;p)$ of \cite{BHM93} defined in terms of fixed
points. \textit{Strictly connective} means that $A_{n}$ is $(n-1)$-connective. This
condition implies that $\THH(A)$ is fibrant as a $C_{p^n}$-$\Omega$-spectrum and does not need to be fibrantly replaced.  

In particular, Theorem~\ref{thm:TC-comparison-intro} shows that results about topological cyclic homology verified
in one of the models carry over to the other one. Its proof is based on 
Theorem~\ref{thm:comparison-in-introduction} and a result about the compatibility of cyclotomic structures defined
in terms of geometric and categorical fixed points.

\subsection{Organization}
In Section~\ref{sec:bispectra}, we develop a framework for bispectra that is convenient for keeping track of the coherences in the zig-zag~\eqref{eq:levelwise-zigzag-intro}. This is used in Section~\ref{sec:thh} to construct the chain of equivalences in
Theorem~\ref{thm:comparison-in-introduction} at the level of orthogonal
spectra with $S^1$-action. After setting up foundations about orthogonal $G$-spectra in
Section~\ref{sec:orthogonal-G}, we show in Section~\ref{sec:cyclotomic-str} that the
spectra in this chain admit compatible cyclotomic
structures and thereby prove Theorem~\ref{thm:comparison-in-introduction}.
In the final Section~\ref{sec:TC-comparison} we show how this leads to the
comparison of topological cyclic homology spectra formulated in
Theorem~\ref{thm:TC-comparison-intro}.

\subsection{Notation and conventions}
We write $\cT$ for the category of compactly generated weak Hausdorff spaces and $\cT_*$ for the corresponding category of based spaces. We use the Bousfield--Kan formula as our model for homotopy colimits. 

\subsection{Acknowledgments}
The authors would like to thank Andrew Blumberg, Amalie H\o genhaven, Michael Mandell, Kristian Moi, Thomas Nikolaus, Stefan Schwe\-de, and Martin Stolz for helpful conversations related to this project. Moreover, the authors would like to thank the referee for a detailed report that helped to substantially improve this paper. C.\,M. was supported by an AMS Simons Travel Grant. I.\,P. was supported by the Danish National Research Foundation through the Centre for Symmetry and Deformation (DNRF92) and by the German Research Foundation Schwerpunktprogramm 1786. E.\,D. and I.\,P. were supported by the Hausdorff Center for Mathematics at the University of Bonn. C.\,M. and C.\,W. thank the Hausdorff Research Institute for Mathematics in Bonn for their hospitality while the draft of this paper was finalized. 

\section{Diagram spectra and bispectra}\label{sec:bispectra}
In this section we review basic notions about diagram spectra and introduce symmetric orthogonal bispectra that will be used later. 

\subsection{Diagram spaces and spectra} We begin by recalling
from~\cite{MMSS} how symmetric and orthogonal spectra can be
described as enriched functors.

Let $\cI$ be the category of finite sets $\bld{m} = \{1, \dots, m\}$
with $m\geq 0$ and morphisms the injective maps. Let $\cL$ be the
topological category of finite dimensional real inner product spaces
and $\mathbb R$-linear isometric embeddings. Let $\cN$ be the
category associated with the partially ordered set $(\mathbb N,\leq)$. We note that there are canonical functors $\cN \to \cI, m \mapsto \bld{m}$ and $\cI \to \cL, \bld{m}\mapsto \mathbb R^m$.

If $V$ is an object of $\cL$, we write $S^V$ for the one point
compactification of $V$. It inherits an action of the
orthogonal group $O(V) = \cL(V,V)$. If $V = \mathbb R^m$, then $S^V$
is the $m$-sphere~$S^m$. The space $\cL(V,W)$ is the base space of a
``complementary'' vector bundle with total space
$\{(w,\phi) \in W \times \cL(V,W) \, | \, w \perp \phi(V) \},$ and we
let $\cL_S(V,W)$ be the Thom space of this bundle. The spaces
$\cL_S(V,W)$ assemble to a $\cT_*$-enriched category $\cL_S$ with the same
objects as $\cL$ and composition induced by the composition of $\cL$.
(Other authors write $\mathscr J$ instead of $\cL_S$.)
The choice of a linear isometry $\phi\colon V \to W$ induces a
homeomorphism
\[ \cL_S(V,W) \iso O(W)_+ \sm_{O(W-\phi(V))} S^{W-\phi(V)} \ .\]
Following~\cite[3.1]{Schlichtkrull_Thom-symmetric}, we define $\cI_S$ to be the $\cT_*$-enriched category
with the same objects as~$\cI$ and morphism spaces
\[ \cI_S(\bld{m},\bld{n}) = \textstyle\bigvee_{\alpha\colon
  \bld{m}\to\bld{n} \in \cI} S^{\bld{n}-\alpha}\]
where $\bld{n}-\alpha$ denotes the complement of the image of $\alpha$. 
If $\alpha\colon \bld{m} \to \bld{n}$ and
${\beta \colon \bld{n} \to \bld{p}}$ are injections, then the
composition in $\cI_S$ is defined by the composition in $\cI$ and the
homeomorphism
$ S^{\bld{p}-\beta} \sm S^{\bld{n}-\alpha} \to
S^{\bld{p}-\beta\alpha}$
induced by the linear isometry
$ {\mathbb R^{\bld{p}-\beta} \oplus \mathbb R^{\bld{n}-\alpha}} \to
\mathbb R^{\bld{p}-\beta \alpha}$
determined by $\alpha$ and $\beta$. The functor
$\mathbb R^{-}\colon \cI\to \cL$ induces a functor $\cI_S \to \cL_S$
that we also denote by $\mathbb R^{-}$. On objects it sends $\bld{m}$ to $\mathbb R^m$, and $\cI_S(\bld{m},\bld{n}) \to \cL_S(\mathbb R^m,\mathbb R^n)$ is the canonical map induced by the map $\cI(\bld{m},\bld{n}) \to \cL(\mathbb R^m,\mathbb R^n)$ sending an injection to the associated isometry.

For the present paper, it is convenient to view orthogonal and symmetric
spectra as enriched functors (compare~\cite[Examples 4.2 and 4.4]{MMSS}). 
\begin{definition}
\begin{enumerate}[(i)]
\item A \textit{symmetric spectrum} is a based continuous functor
  ${\cI_S \!\to\! \cT_*}$.
\item An \textit{orthogonal spectrum} is a based continuous functor
  $\cL_S \to \cT_*$.
\end{enumerate}
\end{definition}
We write $\Spsym$ and $\SpO$ for the resulting functor categories.
Since $\cI_S$ and $\cL_S$ are symmetric monoidal under the disjoint
union and the sum, $\Spsym$ and $\SpO$ inherit $\sm$-products
defined as Day convolution products. We refer to associative (but not
necessarily commutative) monoids as symmetric or orthogonal
\textit{ring spectra} and recall that they are given by lax monoidal
functors from $\cI_S$ or $\cL_S$ to $\cT_*$. 

Let $\cT_*^{\cL}$ be the category of based continuous functors $\cL \to \cT_*$. (These are different from orthogonal spectra $\SpO$, which are based continuous functors $\cL_S \to \cT_*$.) There is a functor 
\begin{equation}\label{eq:OmegaL} \Omega^{\cL} \colon \SpO \to \cT_*^{\cL}, \quad X\mapsto \left(V\mapsto \mathrm{Map}(S^V,X(V))\right) \ .
\end{equation}
On morphism spaces $\Omega^{\cL}$  is given by the continuous maps
\[\begin{split}
&\cL(V,W) \sm \mathrm{Map}(S^V,X(V)) \to \mathrm{Map}(S^W,X(W)) \\
 &(\phi, f)\mapsto \left(S^W \xrightarrow{\iso} S^V \sm S^{W-\phi(V)} \xrightarrow{f \sm \mathrm{id}} X(V) \sm S^{W-\phi(V)}\xrightarrow{\phi_*} X(W)\right).
\end{split}
\] 
For symmetric spectra there is an analogously defined functor 
\begin{equation}\label{eq:OmegaI} \Omega^{\cI} \colon \Spsym \to \cT_*^{\cI}, \quad X\mapsto \left(\bld{n}\mapsto \mathrm{Map}(S^n,X_n)\right) \ .
\end{equation}
If $X \colon \cL_S \to \cT_*$ is an orthogonal spectrum and $m \geq 0$
is an integer, then the $m$-fold shift of $X$ is the orthogonal
spectrum $\sh^m X = X(\mathbb R^m \oplus -) \colon \cL_S \to \cT_*$.
For later use, we note that the restrictions of the $\cL$-diagrams
$\Omega^{\cL}(\sh^mX)$ to $\cN$-diagrams along the canonical
functor $\cN \to \cL$ can be used to detect the homotopy groups and therefore the $\pi_*$-isomorphisms between orthogonal spectra:
\begin{lemma}\label{lem:pi-star-orthogonal-detection}
  A map of orthogonal spectra $X \to Y$ is a $\pi_*$-isomorphism if
  and only if the induced map
  $\hocolim_{\cN}\Omega^{\cL}(\sh^mX) \to
  \hocolim_{\cN}\Omega^{\cL}(\sh^mY)$
  is a weak homotopy equivalence of based spaces for all $m\geq
  0$. \qed
\end{lemma}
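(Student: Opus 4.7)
The strategy is to identify the homotopy groups of the homotopy colimit $\hocolim_{\cN}\Omega^{\cL}(\sh^m X)$ with the stable homotopy groups of $X$, thereby reducing both conditions to the single condition of $\pi_\ell$-isomorphism for all $\ell \in \mathbb Z$.

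First I would unpack the diagram: restricting $\Omega^{\cL}(\sh^m X)$ along the canonical functor $\cN \to \cL$ yields the sequence $n \mapsto \mathrm{Map}(S^n, X(\mathbb R^{m+n})) = \Omega^n X_{m+n}$, with structure maps coming from the inclusions $\mathbb R^n \subset \mathbb R^{n+1}$. Since $\cN$ is the linearly ordered poset $(\mathbb N, \leq)$, the Bousfield--Kan homotopy colimit is a mapping telescope, and the standard compatibility of homotopy groups with sequential homotopy colimits gives
\begin{equation*}
\pi_k\!\bigl(\hocolim_{\cN}\Omega^{\cL}(\sh^m X)\bigr) \;\cong\; \colim_n \pi_{k+n}(X_{m+n}) \;\cong\; \pi_{k-m}(X)
\end{equation*}
for all $k, m \geq 0$, where the second isomorphism reindexes via $j = m+n$ and uses that truncating an initial segment does not change a filtered colimit. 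By naturality this identifies the induced map on $\pi_k$ with $\pi_{k-m}(X) \to \pi_{k-m}(Y)$.

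With this in hand, both implications are immediate: a $\pi_*$-isomorphism $X \to Y$ induces an isomorphism on all $\pi_k$ of the hocolims, and this promotes to a weak homotopy equivalence because for $m \geq 1$ the hocolim is weakly equivalent to a loop space so iso at the canonical basepoint suffices; conversely, every $\ell \in \mathbb Z$ arises as $k - m$ with $k, m \geq 0$, so weak equivalence of hocolims for all $m$ implies $\pi_\ell$-isomorphism for all $\ell$. There is no significant obstacle; the entire argument rests on the standard identification of homotopy groups of a sequential homotopy colimit of based spaces with the colimit of homotopy groups, together with the definition of the stable homotopy groups of an orthogonal spectrum as a filtered colimit.
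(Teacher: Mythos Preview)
The paper offers no proof of this lemma: it is stated with a trailing \qed, signalling that the authors regard it as an immediate consequence of the definition of the stable homotopy groups $\pi_\ell(X)=\colim_n\pi_{\ell+n}(X_n)$. Your argument supplies exactly those details and is correct in substance.

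One small slip worth fixing: the clause ``for $m \geq 1$ the hocolim is weakly equivalent to a loop space'' misplaces the index. What matters is that the terms of the telescope with $n \geq 1$ are loop spaces $\Omega^n X_{m+n}$ and that the bonding maps are loop maps $\Omega^n\tilde\sigma$; this holds for every $m\geq 0$, including $m=0$. Dropping the $n=0$ term does not change the homotopy type of the telescope, and a telescope of loop spaces along loop maps has all path components sharing the same homotopy groups (left translation by a representing loop and its successive images gives compatible self-homotopy-equivalences moving the basepoint component to any other). Hence a $\pi_*$-isomorphism at the canonical basepoint is already a weak homotopy equivalence, for all $m\geq 0$. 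With this correction your forward implication goes through uniformly, and the converse is exactly as you wrote.
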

An analogous statement holds for $\pi_*$-isomorphisms of symmetric
spectra.
\begin{definition}
  A map of orthogonal spectra $X \to Y$ is a \textit{flat cofibration} if it is the retract of
  a relative cell complex built from cells of the form
  $ \{ \cL_S(V,-) \sm_{O(V)}(i \times O(V)/H)_+ \}$ where $V$ is an
  object of $\cL$, $H$ is a closed subgroup of $O(V)$, and
  $i \colon \partial D^k \to D^k$, $k \geq 0$, is a generating
  cofibration for the standard model structure on $\cT$. An orthogonal spectrum 
  $X$ is flat if $* \to X$ is a flat cofibration, and an orthogonal
  ring spectrum is flat if its underlying orthogonal spectrum is.
\end{definition}
\begin{remark}
  This use of the term flat follows Schwede's
  terminology~\cite{Schwede_global}. Our flat orthogonal spectra are
  called \textit{$\bS$-cofibrant} in~\cite{Stolz_equivariant} and
  should not be confused with the more general flat objects of~\cite[Definition B.15]{HHR}.
\end{remark}
Flat cofibrations of orthogonal spectra are the cofibrations in a stable model
structure on $\SpO$ whose weak equivalences are the $\pi_*$-isomorphisms~\cite[Proposition
1.3.10]{Stolz_equivariant}.
By~\cite[Theorem 4.1(3)]{SS02}, both the projective model structure \cite{MM02} and the flat model structure \cite{Stolz_equivariant} on $\SpO$ lift to a model structure on the category of associative orthogonal ring spectra $\mathcal{A}\SpO$. Analogously, the category of commutative orthogonal ring spectra $\mathcal{C}\SpO$ admits a positive projective \cite[Theorem 15.1]{MMSS} and a positive flat \cite[Theorem 1.3.28]{Stolz_equivariant} model structure.
\begin{lemma}\label{lem:unit-is-flat-cof}
Let $A$ be an orthogonal ring spectrum.
\begin{enumerate}[(i)]
\item Suppose that $A$ is not the terminal ring spectrum. Then the unit $\bS \to A$ is a flat cofibration if and only if $A$ is flat as an orthogonal spectrum. \item If $A$ is cofibrant in one of the above model structures on $\mathcal{A}\SpO$ or $\mathcal{C}\SpO$ then the unit 
$\bS \to A$ is a flat cofibration. 
\end{enumerate}
\end{lemma}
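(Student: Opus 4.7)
For part (i), the forward direction I would handle immediately: the sphere spectrum is nothing but $\cL_S(0,-)$, so $\bS$ is itself built as a cell complex with a single cell from the given generating set, obtained by taking $V = 0$, $H = O(0) = \{e\}$, and $k = 0$ (so that $(\emptyset \to D^0)_+$ yields the map $* \to S^0$). Thus $* \to \bS$ is a flat cofibration, and composing with the assumed flat cofibration $\bS \to A$ shows that $* \to A$ is a flat cofibration, i.e.\ $A$ is flat.

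The backward direction of (i) is more delicate. Given a flat, nonterminal ring spectrum $A$, I would verify the left lifting property of $\bS \to A$ against acyclic fibrations in the flat model structure. The key input is the ring structure on $A$: the unit $\eta_V \colon S^V \to A(V)$ is $O(V)$-equivariant, and together with the flatness of $A$ one checks level-wise that its image forms a subcomplex of the $O(V)$-CW decomposition of $A(V)$. Non-terminality ensures $\eta_0 \colon S^0 \to A_0$ is nondegenerate, so $\bS$ genuinely sits inside $A$ as a subcomplex. Lifts against an acyclic fibration $p \colon X \to Y$ are then constructed by induction over the cellular filtration of $A$ relative to this subcomplex, mirroring the parallel treatment for symmetric spectra in \cite[\S 1.3]{Stolz_equivariant}.

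For part (ii), the plan is to use that each of the four cited model structures on $\cA\SpO$ and $\cC\SpO$ is transferred along the free-forgetful adjunction from a corresponding (positive) projective or flat model structure on $\SpO$. A cofibrant ring spectrum $A$ is then a retract of a cell object, i.e.\ a sequential colimit of pushouts of maps $T(i) \colon T(X) \to T(Y)$, where $T$ is the free associative (resp.\ commutative) monoid functor and $i \colon X \to Y$ is a generating cofibration of the underlying structure on $\SpO$. I would then show that each such pushout preserves flat cofibrations on underlying spectra. The standard filtration of a free-ring pushout (see~\cite[Proposition~1.3.28]{Stolz_equivariant}) expresses the underlying map in $\SpO$ as a transfinite composition of pushouts along flat cofibrations of the form $R^{\sm p} \sm i \sm R^{\sm q}$ in the associative case, and along suitable $\Sigma_n$-equivariant iterated pushout-products in the commutative case. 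Monoidality of the flat model structure handles the associative case, while in the commutative case the positivity hypothesis is exactly what guarantees that the relevant $\Sigma_n$-orbits of pushout-products remain flat cofibrations. Starting from the initial ring $\bS$ (which is flat by the forward direction above) and iterating these cell attachments yields that $\bS \to A$ is a flat cofibration.

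The main technical obstacle will be the backward direction of (i): combining the ring-theoretic data and the equivariant cell structure on $A$ to exhibit $\bS$ as a genuine subcomplex of $A$, rather than merely as the target of a map of flat spectra. Part (ii) should be a fairly standard consequence of the transfer principle for lifted model structures, modulo the well-known positivity technicality in the commutative case.
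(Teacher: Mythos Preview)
Your forward direction of (i) and your treatment of (ii) are fine; for (ii) your outline is precisely what the references~\cite[Theorem 4.1(3)]{SS02} and~\cite[Theorem 1.3.30]{Stolz_equivariant} cited in the paper establish, so there is nothing to add there.

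The backward direction of (i), however, has a genuine gap. You assume that $A(V)$ carries an $O(V)$-CW decomposition in which the image of $\eta_V$ is a subcomplex. But flatness only says that $A$ is a \emph{retract} of a cell complex $B$; the spectrum $A$ itself need not admit any cell structure, so ``the $O(V)$-CW decomposition of $A(V)$'' is not well-defined. And even after passing to $B$, the image of the non-basepoint of $S^0$ under $S^0 \to A_0 \to B_0$ is merely some point of $B_0$, with no reason to be a vertex of the given decomposition. Your inductive lifting argument therefore has nothing to induct over.

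The paper bypasses the lifting argument entirely and instead exhibits $\bS \to A$ directly as a retract of a relative cell complex. The key observation is that cells $\cL_S(\R^m,-) \sma i_+$ with $m>0$ contribute nothing in spectral degree $0$, so the non-basepoint of $S^0$ lands in the interior of some cell $\cL_S(\R^0,-) \sma D^n_+$ of $B$. Subdividing that cell makes the image a $0$-cell; reordering so this $0$-cell comes first exhibits $\bS \to B$ as a relative cell complex, and $\bS \to A$ is then a retract of it. This subdivision step is exactly the missing ingredient in your plan.
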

\begin{proof}
For the nontrivial implication of (i) we assume that $A$ is flat and non-zero. Then $S^0 \to A_0$ is injective. We write $A$ as the retract of a cell complex $B$. Since attaching cells of the form $\cL_S(\R^m,-) \sma i_+$ with $m > 0$ doesn't change the spectral degree zero part, we can pick the first cell of $B$ of the form $\cL_S(\R^0,-) \sma i_+$ with $i\colon \partial D^n \to D^n$ a generating cofibration that accommodates the image of the non-basepoint of $S^0$ in its image. After possibly subdividing $D^n$, we may assume that the non-basepoint of $S^0$ is a zero-cell $B_0$. Re-indexing the cells of $B$ so that this new $0$-cell comes first, we see that $\bS \to B$ and thus $\bS \to A$ is a retract of a relative cell complex.

For (ii), we only need to address the flat model structures since projective cofibrant implies flat cofibrant. The associative case follows from~\cite[Theorem 4.1(3)]{SS02} and the commutative case follows from \cite[Theorem 1.3.30]{Stolz_equivariant}.
\end{proof}
\begin{lemma}\phantomsection{}\label{lem:flat-preservation}\begin{enumerate}[(i)]
\item Smashing with flat orthogonal spectra preservers flat cofibrations. In particular, if $X$ and $Y$ are flat in $\SpO$, then so is $X \sm Y$. 
\item If $X$ is flat in $\SpO$, then $X \sm -$ preserves $\pi_*$-isomorphisms. 
\end{enumerate}
\end{lemma}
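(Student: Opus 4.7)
For (i), my plan is to reduce to the pushout-product of two generating flat cofibrations using the Day convolution structure on $\SpO$. Since $\sm$ preserves colimits in each variable, it suffices to check that the pushout-product of two generators $f = \cL_S(V,-) \sm_{O(V)} (i \times O(V)/H)_+$ and $g = \cL_S(W,-) \sm_{O(W)} (j \times O(W)/K)_+$ is a relative flat cell complex. The Day convolution identification $\cL_S(V,-) \sm \cL_S(W,-) \cong \cL_S(V \oplus W,-)$, together with the closed inclusion $O(V) \times O(W) \hookrightarrow O(V \oplus W)$, identifies $f \square g$ with
\[
\cL_S(V \oplus W, -) \sm_{O(V \oplus W)} \bigl((i \square j) \times O(V \oplus W)/(H \times K)\bigr)_+.
\]
Here $i \square j$ is a relative cell complex of generating cofibrations in $\cT$ and $H \times K$ is a closed subgroup of $O(V \oplus W)$, so the right-hand side is a relative flat cell complex. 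Passage to retracts and transfinite compositions of cell attachments extends this to arbitrary flat cofibrations, giving (i).

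For (ii), I plan to combine (i) with the monoidality of the flat stable model structure on $\SpO$ established by Stolz. By (i) and the analogous argument applied to pushout-products involving generating flat acyclic cofibrations, the functor $X \sm -$ is a left Quillen endofunctor of $\SpO$ with the flat stable model structure. Ken Brown's lemma then yields that $X \sm -$ preserves $\pi_*$-isomorphisms between flat orthogonal spectra.

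The main obstacle is extending this preservation to $\pi_*$-isomorphisms of arbitrary orthogonal spectra. I would handle it by cell induction on $X$: the class of flat spectra $X$ for which $X \sm -$ preserves $\pi_*$-isos is closed under retracts, under sequential colimits (via compatibility with the homotopy colimits in Lemma~\ref{lem:pi-star-orthogonal-detection}), and under pushouts along flat cofibrations (by (i) together with the gluing lemma for $\pi_*$-isos). It thus suffices to verify the statement for semifree spectra $X = \cL_S(V,-) \sm_{O(V)} (O(V)/H)_+$, for which the smash product $X \sm Y$ admits an explicit description in terms of the shift $\sh^V Y$ and the quotient by $H$. Preservation of $\pi_*$-isos then reduces via Lemma~\ref{lem:pi-star-orthogonal-detection} to the facts that shifts preserve $\pi_*$-isomorphisms and that the relevant homotopy colimits are compatible with the free $H$-action.
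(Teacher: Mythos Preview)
The paper's own proof of this lemma is a two-line citation of Stolz's thesis (monoidality of the flat model structure for (i), and \cite[Proposition~1.3.11]{Stolz_equivariant} for (ii)), so the comparison is to the arguments behind those references.

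Your argument for (i) is correct and is exactly the verification of the pushout-product axiom on generating flat cofibrations; the identification
\[
f \,\square\, g \;\cong\; \cL_S(V \oplus W,-)\sm_{O(V\oplus W)}\bigl((i \,\square\, j)\times O(V\oplus W)/(H\times K)\bigr)_+
\]
is right, and $H\times K$ is closed in $O(V\oplus W)$ while $i\,\square\, j$ is a relative CW pair, so this is a flat cell.  This is the standard route and matches what Stolz does.

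For (ii), the cell-induction strategy on $X$ is reasonable, but the base case has a real gap.  For the semifree spectrum $X=\cL_S(V,-)\sm_{O(V)}(O(V)/H)_+$ one finds $X\sm Y\cong\bigl(\cL_S(V,-)\sm Y\bigr)/H$, and $\cL_S(V,-)\sm(-)=F_VS^0\sm(-)$ is the \emph{left adjoint} of $\sh^V$, not the shift itself.  On the point-set level these are quite different functors: $F_VS^0\sm Y$ models $\Sigma^{-\dim V}Y$ while $\sh^V Y$ models $\Sigma^{\dim V}Y$.  So the sentence ``preservation of $\pi_*$-isos then reduces \ldots\ to the fact that shifts preserve $\pi_*$-isomorphisms'' does not go through as written.

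There are two standard repairs.  One can prove directly that the adjunction unit $Y\to\sh^V(F_VS^0\sm Y)$ is a $\pi_*$-isomorphism for \emph{every} $Y$ (a Stiefel-manifold connectivity argument on the levels $\cL_S(V,V\oplus U)$), and then conclude since $\sh^V$ reflects $\pi_*$-isomorphisms; this is genuine extra work your sketch does not supply.  Alternatively---and this is closer to how Stolz and Schwede actually argue---one avoids cell induction on $X$ entirely by first showing that for flat $X$ the functor $X\sm(-)$ takes \emph{level} equivalences to level equivalences (using the latching description of flat spectra), and then combining this with flat-cofibrant replacement of the source and target of an arbitrary $\pi_*$-isomorphism together with the Ken~Brown step you already have.
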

\begin{proof}
  Part (i) holds because the model structure of~\cite[Proposition
  1.3.10]{Stolz_equivariant} is monoidal, and part (ii)
  is~\cite[Proposition 1.3.11]{Stolz_equivariant}.
\end{proof}
\begin{remark}
  The flat model structure on orthogonal spectra and the previous
  lemma also appear in more general equivariant contexts
  in~\cite[Proposition 2.10.1]{BDS16} and~\cite[Theorem III.5.10]{Schwede_global}.
\end{remark}

The following result about shifts from \cite[Appendix A.2]{DMPP} is a key technical ingredient for the present paper.
\begin{theorem}\label{thm:shifts-of-flat}
  If $X$ is a flat orthogonal spectrum, then its $m$-fold shift $\sh^m
  X$ is also flat.
\end{theorem}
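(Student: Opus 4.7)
The proof proceeds by a cellular argument. Since colimits in $\SpO$ are computed levelwise and $\sh^m$ is defined by precomposition with the functor $\mathbb R^m \oplus (-)\colon \cL_S \to \cL_S$, the functor $\sh^m$ preserves all colimits and satisfies $\sh^m(*) = *$. Flat cofibrations form the smallest class of maps closed under pushout, transfinite composition, and retracts that contains the generators $\cL_S(V,-) \sma_{O(V)} (i \times O(V)/H)_+$, so it suffices to prove that $\sh^m$ sends each generator to a flat cofibration. Unwinding the definitions, this reduces to showing that the orthogonal spectrum
\[
Y_{V,H} \;=\; \cL_S(V, \mathbb R^m \oplus -) \sma_{O(V)} (O(V)/H)_+
\]
is flat for every $V$ in $\cL$ and every closed subgroup $H \leq O(V)$, since smashing a flat spectrum with $i_+ = (\partial D^k \hookrightarrow D^k)_+$ yields a flat cofibration.

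The core computation is to analyze $Y_{V,H}$ by filtering the Stiefel-type space $\cL(V, \mathbb R^m \oplus W)$ by the rank of the projection $\pi_1 \phi\colon V \to \mathbb R^m$. Setting $F_r(W) = \{\phi : \mathrm{rank}(\pi_1 \phi) \leq r\}$ for $0 \leq r \leq \min(\dim V, m)$ yields a closed, exhaustive filtration that is preserved both by the $O(V)$-action by precomposition and by the $\cL_S$-structure maps, since composing $\phi$ with an isometry $\id \oplus \psi\colon \mathbb R^m \oplus W \to \mathbb R^m \oplus W'$ does not change $\pi_1 \phi$. Passing to Thom spaces of the complementary bundle gives an $O(V)$-equivariant filtration $Y^{(0)} \subseteq \cdots \subseteq Y^{(\min(\dim V, m))} = \cL_S(V, \mathbb R^m \oplus -)$ of orthogonal spectra. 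Using the decomposition of a rank-$r$ isometry along the splitting $V = \ker(\pi_1 \phi) \oplus \ker(\pi_1 \phi)^\perp$ (the first summand embeds isometrically into $W$ and thereby contributes the ``spectrum variable,'' while the second summand is governed by a polar-type decomposition involving the Grassmannian of $r$-dimensional subspaces of $V$ and a space of rank-$r$ partial isometries into $\mathbb R^m$), I would identify each subquotient $Y^{(r)}/Y^{(r-1)}$ with the free orthogonal spectrum $\cL_S(\mathbb R^{\dim V - r}, -) \sma Z_r$ on a based $O(V)$-CW complex $Z_r$ built from a compact smooth manifold of rank-$r$ partial isometries, together with the Thom twist coming from the complementary bundle.

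Each $Z_r$ carries a smooth $O(V)$-action and so admits an $O(V)$-CW structure with closed-subgroup isotropies; consequently $(Y^{(r)}/Y^{(r-1)}) \sma_{O(V)} (O(V)/H)_+$ is flat, being a free orthogonal spectrum on a based $O(\dim V - r)$-CW complex with closed-subgroup isotropies. Assembling the filtration through its successive pushout squares then exhibits $Y_{V,H}$ as flat, completing the cellular reduction and hence the proof. The main obstacle is the geometric step above: one must carefully identify the filtration quotients, keep track of the Thom-space twisting by the complementary bundle on each stratum, and check that the resulting $O(V)$-equivariant cell structure on the moduli space $Z_r$ has only closed-subgroup isotropies, so that passage to the orbit spectrum indeed lands in the flat cell structure generated by the fixed generators.
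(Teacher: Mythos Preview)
The paper does not give its own proof of this statement; it is imported from \cite[Appendix~A.2]{DMPP} as a technical input, so there is no in-paper argument to compare against. Your cellular reduction---$\sh^m$ preserves colimits, so it suffices to show that each $Y_{V,H}=\cL_S(V,\mathbb R^m\oplus-)/H$ is flat---and the idea of filtering by the rank of the projection $\pi_1\phi\colon V\to\mathbb R^m$ are the right opening moves and are in the spirit of the argument one finds in that reference.

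The gap is in the equivariant identification of the associated graded. You assert that $Y^{(r)}/Y^{(r-1)}\cong\cL_S(\mathbb R^{\dim V-r},-)\sma Z_r$ with $O(V)$ acting through $Z_r$ alone, and then, after passing to $H$-orbits, describe the result as a free spectrum on an $O(\dim V-r)$-CW complex. These two descriptions are not mutually consistent, and the first one is not available: on the rank-$r$ stratum the ``spectrum direction'' is the isometric embedding $\psi\colon K\hookrightarrow W$ with $K=\ker(\pi_1\phi)$, and $K$ varies as the fiber of the tautological $(\dim V-r)$-plane bundle over the Grassmannian of $(\dim V - r)$-planes in $V$. That bundle is $O(V)$-equivariantly (indeed non-equivariantly, for $0<r<\dim V$) nontrivial, so one cannot split off a fixed factor $\cL_S(\mathbb R^{\dim V-r},-)$ with $O(V)$ acting only on the cofactor. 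The correct form of the subquotient is a \emph{semifree} spectrum $\cL_S(\mathbb R^{\dim V-r},-)\sma_{O(\dim V-r)}\widetilde Z_r$ on a based $(O(V)\times O(\dim V-r))$-space $\widetilde Z_r$ (the frame bundle of the tautological bundle together with the remaining partial-isometry and Thom data). After $H$-orbits one then has to check that $\widetilde Z_r/H$ admits an $O(\dim V-r)$-CW structure with closed isotropy, and---separately---that each inclusion $Y^{(r-1)}/H\hookrightarrow Y^{(r)}/H$ is itself a flat cofibration, not merely that its cofiber is flat. Both points are where the real work lies (Illman's theorem plus an explicit pushout description of the filtration step), and both are elided in your sketch.
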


\subsection{Symmetric orthogonal bispectra}
The following notion of bispectra will be convenient for keeping track of coherences.
\begin{definition}\label{def:bispectra}
  A symmetric spectrum object in orthogonal spectra is a based continuous functor 
  $E\colon \cI_S \sm \cL_S \to \cT_*$. We simply call these objects \textit{bispectra} 
  and write $E^n = E(n,-)$ for the orthogonal spectrum obtained by fixing the
  symmetric spectrum degree of $E$.
\end{definition}
Unraveling definitions, $E$ consists of a sequence of orthogonal
spectra $E^n, n\geq 0,$ with $\Sigma_n$-actions through maps in
$\SpO{}$, and structure maps $E^n \sm S^1 \to E^{n+1}$ in $\SpO{}$ that
are compatible with the $\Sigma_n$-action in the same sense required
for ordinary symmetric spectra. So $E$ can be viewed as a symmetric
spectrum object in $\SpO{}$. We will also sometimes view $E$ as an
orthogonal spectrum object in $\Spsym{}$.

The symmetric monoidal structures on $\cI_S$ and $\cL_S$ provide a
symmetric monoidal structure on $ \cI_S \sm \cL_S$ that induces a Day
convolution product on $\Spsym(\SpO)$. Almost immediately from its definition, this product is canonically isomorphic to the smash product of symmetric spectrum objects in orthogonal spectra. We therefore call the associative monoids
in $\Spsym(\SpO)$ symmetric ring spectra of orthogonal spectra, or simply ring bispectra. They are the lax monoidal functors
$\cI_S \sm \cL_S \to \cT_*$.

We shall be interested in two types of examples for symmetric spectra in $\SpO{}$. 

\begin{definition}
  Let $X$ be an orthogonal spectrum. Then $\Sigma^{\infty}X$ is the bispectrum 
  given in level $(m,V)$ by
  $(\Sigma^{\infty}X)(m,V) = X(\mathbb R^m) \sm S^V$. The structure
  maps arise by viewing $\Sigma^{\infty}X$ as an enriched 
  functor
  \[\Sigma^{\infty}X = X(\mathbb R^{-}) \sm \cL_S(0,-) \colon \cI_S \sm
  \cL_S \to \cT_*\ .\]
\end{definition}
So $(\Sigma^{\infty}X)^n$ is the
orthogonal suspension spectrum of the $n$-th level $X_n=X(\mathbb
R^n)$ of $X$. When $A$ is an orthogonal ring spectrum, then
$\Sigma^{\infty}A$ is a smash product of two diagrams that define symmetric and orthogonal ring spectra, respectively, and is therefore a ring bispectrum.

\begin{definition}
  Let $X$ be an orthogonal spectrum. Then $\sh{}X$ is the
  bispectrum given in level $(m,V)$ by
  $(\sh{}X)(m,V) = X(\mathbb R^m \oplus V)$. The structure maps
  arise by viewing $\sh{}X$ as an enriched
  functor
  \[\sh{}X = X(\mathbb R^{-}\oplus -) \colon \cI_S \sm \cL_S \to
  \cT_*\ .\]
\end{definition}
For each value of $n \geq 0$, the orthogonal spectrum $\sh^nX = (\sh{} X)^n =
(\sh{}X)(n,-)$ is the $n$-fold shift of $X$. Since the composite
\[ \cI_S \sm \cL_S \xrightarrow{\mathbb R^{-} \sm \mathrm{id}} \cL_S \sm \cL_S \xrightarrow{\oplus} \cL_S \]
is a strong symmetric monoidal functor, it follows that an orthogonal ring spectrum $A$ gives rise to a ring bispectrum $\sh{}A$.

We notice that the structure maps of $X$ induce a canonical morphism of bispectra
\begin{equation}\label{eq:suspension-to-shift}
\Sigma^{\infty}X \to \sh{}X 
\end{equation} 
that is a morphism of ring bispectra when $X$ is an orthogonal ring spectrum.

\begin{lemma}\label{lem:suspension-shift-equivalence}
Let $X$ be an orthogonal spectrum. In each orthogonal level $V$, the map~\eqref{eq:suspension-to-shift} is a $\pi_*$-isomorphism of symmetric spectra $(\Sigma^{\infty}X)(-,V) \to (\sh{}X)(-,V)$. 
\end{lemma}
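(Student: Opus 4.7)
Plan: The map $(\Sigma^{\infty}X)(-, V) \to (\sh{}X)(-, V)$ at symmetric level $m$ is, by construction, the orthogonal structure map $X_m \sma S^V \to X(\mathbb{R}^m \oplus V)$ of $X$. My plan is to show this assembly is a $\pi_*$-isomorphism of symmetric spectra by computing the naive homotopy groups $\pi_k(Y) = \colim_m \pi_{k+m}(Y_m)$ on both sides and checking that the induced map is an isomorphism. Picking a linear isometry $V \iso \mathbb{R}^n$ with $n = \dim V$ identifies the source with the symmetric spectrum having $m$-th space $X_m \sma S^n$ and the target with the symmetric spectrum having $m$-th space $X_{m+n}$; under these identifications the comparison map becomes the iterated orthogonal structure map $X_m \sma S^n \to X_{m+n}$.

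Both sides then identify canonically with the orthogonal homotopy group $\pi_{k-n}(X) = \colim_m \pi_{k+m-n}(X_m)$: the target via the cofinal re-indexing $m \mapsto m+n$, and the source via the suspension maps $\pi_{k+m-n}(X_m) \to \pi_{k+m}(X_m \sma S^n)$, which become isomorphisms in the colimit because $X$ is an orthogonal (hence semistable) spectrum, so smashing with $S^n$ shifts its stable homotopy groups by $-n$. The comparison map intertwines these two identifications: the composite $\pi_{k+m-n}(X_m) \to \pi_{k+m}(X_m \sma S^n) \to \pi_{k+m}(X_{m+n})$ equals the $n$-fold orthogonal structure map that is precisely the transition map in the colimit for $\pi_{k-n}(X)$. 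Consequently, the induced map on $\pi_k$ corresponds to the identity on $\pi_{k-n}(X)$.

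The main technical subtlety lies in the identification on the source side, since naive homotopy groups of general symmetric spectra need not compute the correct stable homotopy groups. Here this works out because the source symmetric spectrum is built from the orthogonal spectrum $X$ by operations that preserve semistability. Alternatively, one could verify the $\pi_*$-isomorphism claim via the symmetric-spectrum analog of Lemma~\ref{lem:pi-star-orthogonal-detection}, reducing to a weak-equivalence check on $\hocolim_{\cN} \Omega^{\cI} \sh^{j}$ of both sides, which follows by a standard cofinality argument and a stable-range suspension comparison. Once one of these identifications is in place, the rest of the argument is direct bookkeeping of the colimit systems.
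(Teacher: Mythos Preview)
Your proposal is correct and rests on the same key input as the paper's proof: the underlying symmetric spectrum $X(\mathbb{R}^{-})$ of an orthogonal spectrum is semistable. The paper's argument is more concise: it identifies the map in question as the canonical map $X(\mathbb{R}^{-}) \sma S^V \to X(\mathbb{R}^{-} \oplus V)$ and directly invokes \cite[Proposition~5.6.2(2)]{HSS00}, which characterizes semistability of a symmetric spectrum $Y$ by the condition that $Y \sma S^1 \to \sh Y$ is a $\pi_*$-isomorphism. Your approach instead unpacks this by computing the naive homotopy groups on each side and matching them with $\pi_{k-n}(X)$; this is essentially a reproof of the cited characterization in the particular case at hand.

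One small point worth making explicit: your ``cofinal reindexing $m \mapsto m+n$'' for the target side is not purely formal. The structure maps of $(\sh X)(-,V)$ insert the new suspension coordinate between $\mathbb{R}^m$ and $V$, whereas the transition maps in the colimit for $\pi_{k-n}(X)$ insert it at the end; these differ by a permutation isometry. The orthogonal structure of $X$ is what lets you conjugate one system into the other, so semistability is in fact used on both sides of your identification, not only on the source side where you flag it.
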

\begin{proof}
The map in question is the canonical map $X(\mathbb R^{-}) \sm S^V \to X(\mathbb R^{-} \oplus V)$. Essentially by the definition of semistability, this map is a $\pi_*$-isomorphism if and only if the symmetric spectrum $X(\mathbb R^{-})$ is semistable; see~\cite[Proposition 5.6.2(2)]{HSS00}. The claim follows because $X(\mathbb R^{-})$ is the underlying symmetric spectrum of an orthogonal spectrum. 
\end{proof}
The following definition is motivated by Lemma~\ref{lem:pi-star-orthogonal-detection} and provides one of various equivalent ways to define $\pi_*$-isomorphisms of bispectra. To state it, we note that there is
a functor 
\[ \Omega^{\cI \times \cL} \colon \Spsym(\SpO) \to \cT_*^{\cI\times \cL}, \quad E\mapsto ((\bld{m},V) \mapsto \mathrm{Map}(S^m \sm S^V,E^m(V)))\]
that is defined on morphisms in a manner similar to $\Omega^{\cL}$ and $\Omega^{\cI}$. An $\cI\times \cL$-diagram may be viewed as an $\cN\times \cN$-diagram by restricting along the inclusion $\cN\times \cN \to \cI\times \cL$. 
\begin{definition}
A morphism of bispectra $D \to E$ is a \textit{$\pi_*$-isomorphism} if for all $k,l \geq 0$ the induced map 
\[ \hocolim_{\cN\times\cN} \Omega^{\cI\times \cL}(D(\bld{k}\concat -, \mathbb R^l \oplus -)) \to \hocolim_{\cN\times\cN} \Omega^{\cI\times \cL}(E(\bld{k}\concat -, \mathbb R^l \oplus -))\]
is a weak homotopy equivalence of spaces. 
\end{definition}

\begin{corollary}\label{cor:shiftandsusp}
If $X$ is an orthogonal spectrum, then $\Sigma^{\infty}X \to \sh{}X$ is a $\pi_*$-isomorphism. 
\end{corollary}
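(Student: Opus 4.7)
The plan is to unpack the definition of $\pi_*$-isomorphism of bispectra via a Fubini argument and reduce the statement to Lemma~\ref{lem:suspension-shift-equivalence} combined with the symmetric-spectrum analog of Lemma~\ref{lem:pi-star-orthogonal-detection}. Fix $k,l\geq 0$. I would first apply Fubini for Bousfield--Kan homotopy colimits to rewrite the double hocolim over $\cN\times\cN$ appearing in the $\pi_*$-iso condition as an iterated hocolim, with outer index $V\in\cN$ and inner index $\bld{m}\in\cN$. Since $\hocolim$ preserves levelwise weak equivalences, it then suffices to show that for each fixed $V$ the inner map
\[ \hocolim_{\bld m\in\cN} \Omega^{m+V}(X_{k+m}\sm S^{\mathbb{R}^l\oplus V}) \to \hocolim_{\bld m\in\cN} \Omega^{m+V} X(\mathbb{R}^{k+m+l}\oplus V) \]
is a weak equivalence of based spaces.

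For each $V$ I would introduce the symmetric spectra $Y^V$ and $Z^V$ obtained by applying $\Omega^V$ levelwise to the symmetric spectra $(\Sigma^{\infty}X)(-,\mathbb{R}^l\oplus V)$ and $(\sh{}X)(-,\mathbb{R}^l\oplus V)$, respectively. The inner map above then takes the form $\hocolim_m \Omega^m Y^V_{k+m}\to \hocolim_m \Omega^m Z^V_{k+m}$, and by the symmetric-spectrum analog of Lemma~\ref{lem:pi-star-orthogonal-detection} it is a weak equivalence provided that the map $Y^V\to Z^V$ is a $\pi_*$-isomorphism of symmetric spectra.

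To verify this last point, Lemma~\ref{lem:suspension-shift-equivalence} already supplies a $\pi_*$-iso $(\Sigma^{\infty}X)(-,\mathbb{R}^l\oplus V)\to(\sh{}X)(-,\mathbb{R}^l\oplus V)$ of symmetric spectra, and the small remaining observation is that applying $\Omega^V$ levelwise preserves $\pi_*$-isomorphisms of symmetric spectra. This is immediate from the chain of identifications
\[ \pi_j(\Omega^V Y) = \colim_m \pi_{j+m}(\Omega^V Y_m) = \colim_m \pi_{j+m+\dim V}(Y_m) = \pi_{j+\dim V}(Y). \]
The main obstacle is purely one of bookkeeping---making the Fubini rearrangement precise, and verifying that the levelwise $\Omega^V$ construction genuinely yields a symmetric spectrum with the expected structure maps that inherits the map from the original $\pi_*$-isomorphism. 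Once this is set up, the conclusion is a formal combination of Lemma~\ref{lem:suspension-shift-equivalence}, the symmetric analog of Lemma~\ref{lem:pi-star-orthogonal-detection}, and the preservation of weak equivalences by $\hocolim$.
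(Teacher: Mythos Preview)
Your proposal is correct and follows essentially the same approach as the paper: fix the orthogonal variable $V$, use Lemma~\ref{lem:suspension-shift-equivalence} (together with the symmetric analog of Lemma~\ref{lem:pi-star-orthogonal-detection} and the observation that levelwise $\Omega^V$ preserves $\pi_*$-isomorphisms) to get a weak equivalence on the inner $\hocolim_{\cN}$, and then conclude by Fubini for $\hocolim_{\cN\times\cN}$. The paper's proof is simply terser, leaving the passage through $\Omega^V$ and the symmetric analog of Lemma~\ref{lem:pi-star-orthogonal-detection} implicit.
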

\begin{proof}
  Lemma~\ref{lem:suspension-shift-equivalence} implies that for fixed $V$ and $k,l\geq 0$, the natural transformation
  \[ \mathrm{Map}(S^{-}\sm S^V, (\Sigma^{\infty}X)(\bld{k}\concat -,\mathbb R^l \oplus V)) \to \mathrm{Map}(S^{-}\sm S^V, (\sh{}X)(\bld{k}\concat -,\mathbb R^l \oplus V)) \]
  induces a weak equivalence when evaluating the homotopy colimit over $\cN$. The claim follows by computing
  $\hocolim_{\cN\times\cN}$ as an iterated homotopy colimit.
\end{proof}

An orthogonal spectrum defines an endofunctor $X \sm - \colon \Spsym(\SpO) \to \Spsym(\SpO)$ by forming the smash product of orthogonal spectra $X \sma E^n$ for each level $n$.
\begin{lemma}\label{lem:smashing-with-flat-preserves-diagonal-isos}
  Let $D \to E$ be a $\pi_*$-isomorphism of bispectra and let $X$ be a
  flat orthogonal spectrum. Then $X \sm D \to X \sm E$ is a
  $\pi_*$-isomorphism of bispectra. 
\end{lemma}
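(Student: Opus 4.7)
The plan is to reduce the bispectrum $\pi_*$-isomorphism condition to a $\pi_*$-isomorphism of an associated orthogonal spectrum, and then invoke Lemma~\ref{lem:flat-preservation}(ii). For each $k \geq 0$ and bispectrum $D$, I would define the orthogonal spectrum
\[ \tilde{D}_k(V) := \hocolim_{\bld{n} \in \cN} \mathrm{Map}(S^n, D^{k+n}(V)), \]
where the structure in $V$ is inherited from each $D^{k+n}$ and the transition maps in $\bld{n}$ arise as adjoints of the symmetric spectrum structure maps $D^{k+n} \sm S^1 \to D^{k+n+1}$. Using compactness of $S^n$ to commute $\mathrm{Map}(S^n,-)$ past filtered colimits, together with Fubini for homotopy colimits, one identifies
\[ \hocolim_{\bld{m}\in\cN} \Omega^{\cL}(\sh^l \tilde{D}_k)(\R^m) \;\simeq\; \hocolim_{\cN\times\cN} \Omega^{\cI\times\cL}(D(\bld{k}\concat-,\R^l\oplus-)) \]
naturally in $D$. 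Applying Lemma~\ref{lem:pi-star-orthogonal-detection} to $\tilde{D}_k \to \tilde{E}_k$ then shows that $D \to E$ is a $\pi_*$-isomorphism of bispectra if and only if $\tilde{D}_k \to \tilde{E}_k$ is a $\pi_*$-isomorphism of orthogonal spectra for every $k \geq 0$.

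With this reformulation in hand, it suffices to construct a natural $\pi_*$-isomorphism $X \sm \tilde{D}_k \xrightarrow{\sim} \tilde{(X \sm D)}_k$ for flat $X$; then Lemma~\ref{lem:flat-preservation}(ii) applied to the $\pi_*$-isomorphism $\tilde{D}_k \to \tilde{E}_k$ combined with naturality yields the $\pi_*$-isomorphism $\tilde{(X \sm D)}_k \to \tilde{(X \sm E)}_k$, which by the reformulation is equivalent to $X \sm D \to X \sm E$ being a $\pi_*$-isomorphism of bispectra. Since $X \sm -$ commutes with sequential homotopy colimits (as a left adjoint), constructing this comparison reduces to showing that the classical assembly map $X \sm \mathrm{Map}(S^n, Y) \to \mathrm{Map}(S^n, X \sm Y)$ is a $\pi_*$-isomorphism of orthogonal spectra for $X$ flat.

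I expect this assembly map verification to be the main obstacle. The plan is to handle it by passing through the orthogonal shift $\sh^n$, which commutes with Day convolution smash products on the nose, and using that the natural adjoint comparison between $\mathrm{Map}(S^n,-)$ and $\sh^n$ becomes a $\pi_*$-isomorphism once the sequential $\hocolim_{\cN}$ appearing in the definition of $\tilde{D}_k$ is taken. Theorem~\ref{thm:shifts-of-flat} ensures that $\sh^n X$ remains flat, so that Lemma~\ref{lem:flat-preservation}(i)--(ii) can be applied at each stage of the resulting zigzag of $\pi_*$-isomorphisms.
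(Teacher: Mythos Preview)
Your overall strategy---reducing the bispectrum $\pi_*$-isomorphism condition to $\pi_*$-isomorphisms of the auxiliary orthogonal spectra $\tilde D_k$ and then invoking Lemma~\ref{lem:flat-preservation}(ii)---is sound and genuinely different from the paper's proof. The paper instead argues via the stabilization $\mathrm{Sp}^{\mathbb N}(\SpO)$ of the flat stable model structure: it identifies bispectrum $\pi_*$-isomorphisms with weak equivalences in Hovey's stabilized model category, replaces $E$ cofibrantly by a levelwise weak equivalence of spaces (which $X\sm-$ preserves because flat spectra send level equivalences to stable equivalences), and then cites \cite[Theorem~6.3]{Hovey-general} for the cofibrant case. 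Your route is more elementary in that it avoids this machinery, at the cost of having to verify the assembly map by hand.

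That verification, however, contains an error: the shift functor $\sh^n$ does \emph{not} commute with the Day convolution smash product on the nose. There is only a natural map $X \sm \sh^n Y \to \sh^n(X \sm Y)$, and for flat $X$ it is a $\pi_*$-isomorphism but not an isomorphism (this is exactly the content of the square in the proof of Lemma~\ref{lem:Bcy-to-thh-sh-R}). Moreover there is no direct natural comparison between $\Omega^n Y$ and $\sh^n Y$; the canonical map goes $Y \to \Omega^n\sh^n Y$. So the zigzag you sketch does not assemble as stated, and Theorem~\ref{thm:shifts-of-flat} is not what is needed here.

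The assembly map $a\colon X \sm \Omega^n Y \to \Omega^n(X\sm Y)$ \emph{is} a $\pi_*$-isomorphism for flat $X$, but the clean argument bypasses shifts entirely. By adjunction, $\Sigma^n a$ composed with the counit $\varepsilon_{X\sm Y}\colon \Sigma^n\Omega^n(X\sm Y)\to X\sm Y$ equals the composite
\[
\Sigma^n(X \sm \Omega^n Y) \cong X \sm \Sigma^n\Omega^n Y \xrightarrow{\,X\sm\varepsilon_Y\,} X \sm Y.
\]
Since the counit $\varepsilon_Z$ is a $\pi_*$-isomorphism for every orthogonal spectrum $Z$, both $\varepsilon_{X\sm Y}$ and (by Lemma~\ref{lem:flat-preservation}(ii)) $X\sm\varepsilon_Y$ are $\pi_*$-isomorphisms; hence so is $\Sigma^n a$, and therefore $a$. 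With this correction your argument goes through.
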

\begin{proof}
  We argue with stabilization $\mathrm{Sp}^{\mathbb N}(\SpO)$ of the
  flat stable model structure on $\SpO$ (compare e.g.~\cite[Section
  3]{Hovey-general}).  One can check that a map in $\Spsym(\SpO)$ is a
  $\pi_*$-isomorphism if and only if its underlying map in
  $\mathrm{Sp}^{\mathbb N}(\SpO)$ is a weak equivalence. Moreover,
  $\mathrm{Sp}^{\mathbb N}(\SpO)$ is also tensored over $\SpO$ and the
  tensor commutes with the forgetful functor from
  $\Spsym(\SpO)$. Since a cofibrant replacement $E^c \to E$ in
  $\mathrm{Sp}^{\mathbb N}(\SpO)$ can be chosen to be a weak
  equivalence of spaces in every bi-degree, the fact that $X\sm -$
  sends level equivalences of orthogonal spectra to stable equivalences
  implies that $X \sm E^c \to X \sm E$ is a $\pi_*$-isomorphism. So it
  is enough to show that $X \sm -$ preserves weak equivalences between
  cofibrant objects in $\mathrm{Sp}^{\mathbb N}(\SpO)$, and this
  follows from \cite[Theorem 6.3]{Hovey-general}. \end{proof}

\section{Topological Hochschild homology}\label{sec:thh}
We recall the cyclic bar construction in orthogonal spectra. If $A$ is an orthogonal ring spectrum, we define a cyclic object $B^{\mathrm{cy}}_{\bullet}(A)$ by
\[ B^{\mathrm{cy}}_q(A) = A^{\sma (q+1)} \]
where the $\sma$ denotes the smash product of orthogonal spectra. We imagine these are arranged in a circle. Then the face maps multiply adjacent copies of $A$, the degeneracy maps insert copies of $A$ along the unit $\Sph \to A$, and the cyclic structure maps act by cyclic permutations (see e.g. \cite[Definition 4.1.2]{MR1740756}, \cite[\S 2.1]{madsen_survey}, \cite[Section 4.2]{Mal17}).

To define topological Hochschild homology we emulate this cyclic bar construction but with B\" okstedt's variant of the smash product. Specifically, let $X_0, \ldots, X_{q}$ be any $(q+1)$-tuple of bispectra in the sense of Definition~\ref{def:bispectra}. 
Their external smash product $X_0 \barsmash \ldots \barsmash X_q$ is defined to be a continuous functor $\cI_S^{\sma (q+1)} \to \SpO$ sending 
\[(\bld{n_0},\ldots,\bld{n_q})\longmapsto X_0^{n_0} \sma \ldots \sma X_q^{n_q},\]
where the superscripts are symmetric spectrum levels and $\sma$ is the smash product of orthogonal spectra. Here $\cI_S^{\sma (q+1)}$ denotes the $q+1$-fold product $\cI_S \sma \dots \sma \cI_S$ of the enriched category $\cI_S$, and we view $X_0 \barsmash \ldots \barsmash X_q$  as a $(q+1)$-multisymmetric spectrum of orthogonal spectra. By definition, the smash product of bispectra $X_0 \sma \ldots \sma X_q$ is obtained from this external smash product $X_0 \barsmash \ldots \barsmash X_q$ by left Kan extension along a direct sum functor $\oplus\colon \cI_S^{\sm (q+1)} \to \cI_S$. 

The functor $\Omega^{\cI}\colon \Spsym(\SpO) \to (\SpO)^{\cI}$ from~\eqref{eq:OmegaI} generalizes to a functor
\[ \Omega^{\cI^{\times (q+1)}}\colon (\Spsym)^{(q+1)}(\SpO) \to (\SpO)^{\cI^{\times (q+1)}} \]
from $(q+1)$-multisymmetric spectra of orthogonal spectra to $\cI^{\times (q+1)}$-diagrams of orthogonal spectra (where $\cI^{\times (q+1)}$ denotes the $(q+1)$-fold product $\cI \times \dots \times\cI$). Evaluated at the external smash product it is given by the formula
\[ \Omega^{\cI^{\times (q+1)}}(X_0 \barsmash \ldots \barsmash X_q) = \left((\bld{n_0},\dots, \bld{n_q})
  \mapsto \mathrm{Map}(S^{n_0}\sm \dots \sm S^{n_q},X_0^{n_0}\sm \dots \sm X_q^{n_q}) \right)\]
as a functor $\cI^{\times (q+1)} \to \SpO$. We define the B\" okstedt smash product of the bispectra $X_0,\ldots,X_q$ to be its homotopy colimit in the category of orthogonal spectra:
\[ \cB(X_0,\ldots,X_q) := \hocolimsubscr_{(\bld{n_0},\ldots,\bld{n_q}) \in \cI^{\times (q+1)}} \mathrm{Map}(S^{n_0}\sm \dots \sm S^{n_q},X_0^{n_0}\sm \dots \sm X_q^{n_q}). \]

We recall and slightly generalize the following functoriality property for the B\"okstedt smash product. Suppose that $X_0,\ldots,X_q$, and $Y_0,\ldots,Y_p$ are bispectra. To each map of sets $f\colon \{0,\ldots,q\} \to \{0,\ldots,p\}$, with a choice of total ordering on the preimage of each point, we define a functor 
\begin{equation}\label{ordered_loday}
f_* \colon
  \cI^{\times (q+1)} \to \cI^{\times (p+1)},\quad  (\bld{n_0},\dots, \bld{n_q})
  \mapsto \left(\textstyle{}\bigsqcup_{i \in f^{-1}(0)} \bld{n_i},\dots, \bigsqcup_{i \in f^{-1}(p)} \bld{n_i}\right)
\end{equation}
where the terms of the coproduct are ordered according to the chosen ordering of each $f^{-1}(j)$. For each such $f$, and each collection of maps of bispectra
\[ \varphi_j\colon \bigwedge_{i \in f^{-1}(j)} X_i \to Y_j, \]
we define a map of diagrams
\[ \Omega^{\cI^{\times (q+1)}}(X_0 \barsmash \ldots \barsmash X_q) \to \Omega^{\cI^{\times (p+1)}}(Y_0 \barsmash \ldots \barsmash Y_p) \circ f_* \]
using the $\varphi_j$ and the evident homeomorphism $S^{\sum_i n_i} \iso S^{\sum_j \sum_{i \in f^{-1}(j)} n_i}$. In total, the data of $f$, the orderings, and the $\{\varphi_j\}$ define a map of B\"okstedt smash products
\[ \cB(f,\{\varphi_j\})\colon \cB(X_0,\ldots,X_q) \to \cB(Y_0,\ldots,Y_p).\]
The verification of the following is straightforward and identical to the corresponding verification for symmetric spectra objects of spaces.
\begin{proposition}\label{bokstedt_functoriality}
The construction $\cB(f,\{\varphi_j\})$ respects identity maps and compositions, using the evident dictionary order to compose total orderings. \qed
\end{proposition}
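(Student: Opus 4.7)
The plan is to factor $\cB(f,\{\varphi_j\})$ through two standard functoriality operations on homotopy colimits and verify compatibility at each layer separately. Writing $F^X = \Omega^{\cI^{\times(q+1)}}(X_0 \barsmash \cdots \barsmash X_q)$ and $F^Y = \Omega^{\cI^{\times(p+1)}}(Y_0 \barsmash \cdots \barsmash Y_p)$, the map $\cB(f,\{\varphi_j\})$ is, by construction, the composite
\[
\hocolimsubscr_{\cI^{\times(q+1)}} F^X \xrightarrow{\hocolim \theta} \hocolimsubscr_{\cI^{\times(q+1)}} (F^Y \circ f_*) \to \hocolimsubscr_{\cI^{\times(p+1)}} F^Y,
\]
where $\theta$ is the natural transformation of $\cI^{\times(q+1)}$-diagrams assembled from the $\varphi_j$ together with the canonical homeomorphism that regroups the sphere coordinates according to $f$, and the second arrow is the map on Bousfield--Kan homotopy colimits induced by the functor $f_*$. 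Both operations -- postcomposition with a natural transformation and pushforward along a functor of indexing categories -- are strictly functorial in their respective inputs.

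For the identity assertion I would simply observe that when $f = \mathrm{id}$ (with the trivial singleton orderings) and each $\varphi_j = \mathrm{id}$, the functor $f_*$ is literally the identity of $\cI^{\times(q+1)}$ and $\theta$ is the identity natural transformation, so both arrows above are identities.

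For composition with a second set of data $(g,\{\psi_k\})$ where $g\colon\{0,\ldots,p\}\to\{0,\ldots,r\}$, I would reduce the question to two independent bookkeeping checks. First, $(g\circ f)_* = g_* \circ f_*$ as functors $\cI^{\times(q+1)} \to \cI^{\times(r+1)}$: this is exactly the statement that the dictionary order on $(g\circ f)^{-1}(k) = \bigsqcup_{j\in g^{-1}(k)} f^{-1}(j)$ is the order obtained by iterating the two given orderings, together with the strict associativity of disjoint unions of indexed finite sets. Second, the natural transformation $\theta_{g\circ f}$ built from the wedge composites $\psi_k \circ \bigwedge_j \varphi_j$ equals the horizontal composite $(\theta_g \ast f_*)\circ \theta_f$. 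At a point $(\bld{n_0},\ldots,\bld{n_q})$ both natural transformations are built from the same $\varphi_j$ and $\psi_k$ together with a sphere-regrouping isomorphism $S^{\sum_i n_i} \iso S^{\sum_k \sum_{j\in g^{-1}(k)} \sum_{i\in f^{-1}(j)} n_i}$, which agrees on the two sides by the symmetric monoidal coherence of $(\cT_*, \sma)$ applied to spheres and to orthogonal spectra.

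The main obstacle, to the extent that there is one, is purely notational: keeping the $(q+1)$-fold orderings, nested sphere coordinates, and iterated wedge maps aligned. Once the factorization above is in place and the dictionary-order convention is spelled out, every coherence that appears is one of the standard strict or canonical coherences for symmetric monoidal categories (finite sets under $\sqcup$, based spaces under $\sma$, orthogonal spectra under $\sma$). No genuinely new input beyond these coherences and the functoriality of Bousfield--Kan homotopy colimits is required, which is why the proposition reduces to the corresponding verification for symmetric spectrum objects of spaces referenced in the statement.
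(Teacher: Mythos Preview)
Your proposal is correct and is essentially what the paper has in mind: note that the paper gives no proof whatsoever, placing the \qed\ immediately after the statement and remarking only that the verification is ``straightforward and identical to the corresponding verification for symmetric spectra objects of spaces.'' Your factorization through the natural transformation $\theta$ followed by the pushforward along $f_*$, together with the reduction of the composition check to the dictionary-order identity $(g\circ f)_* = g_*\circ f_*$ and the symmetric monoidal coherence for the sphere regrouping, is exactly the routine bookkeeping the paper is gesturing at.
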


We specialize to the case where all of the $X_i$ and $Y_j$ are the same bispectrum $E$. In this case we use the notations $\Omega^{\cI}_{q+1}(E)  = \Omega^{\cI^{\times (q+1)}}(E \barsmash \ldots \barsmash E)$ and define
\begin{equation}\label{eq:Omega-Iq+1}
  \mathrm{thh}_q(E)= \hocolimsubscr_{\cI^{\times (q+1)}} \Omega^{\cI}_{q+1}(E)
   = \hocolimsubscr_{(\bld{n_0},\dots, \bld{n_q})} \mathrm{Map}(S^{n_0}\sm \dots \sm S^{n_q},E^{n_0}\sm \dots \sm E^{n_q}) \ .
\end{equation}
The notation is meant to suggest that this is the $q$th simplicial
level of the simplicial object which realizes to the B\"okstedt topological
Hochschild homology of $E$.
Since $\mathrm{thh}_q(E)$ is a B\"okstedt smash product $\cB(E,\ldots,E)$, the previous proposition immediately implies the following.
\begin{corollary}\label{cyclic_structure}
If $E$ is a ring bispectrum, the multiplication and unit maps of $E$ define a cyclic structure on the orthogonal spectra $\mathrm{thh}_q(E)$.\qed
\end{corollary}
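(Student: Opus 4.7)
The plan is to construct each structure map of a cyclic object---the face maps $d_i$, the degeneracies $s_i$, and the cyclic operators $t_q$---on $\mathrm{thh}_\bullet(E)$ as an instance of the functorial construction $\cB(f,\{\varphi_j\})$ from Proposition~\ref{bokstedt_functoriality}, and then to deduce the cyclic relations from that functoriality combined with the associativity and unitality of $E$.

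For the inner face $d_i\colon \mathrm{thh}_q(E) \to \mathrm{thh}_{q-1}(E)$ with $0\leq i < q$, I would take $f\colon \{0,\dots,q\} \to \{0,\dots,q-1\}$ defined by $f(j)=j$ for $j\leq i$, $f(i+1)=i$, and $f(j)=j-1$ for $j>i+1$, together with the ordering $i<i+1$ on $f^{-1}(i)$; the associated $\varphi_i=\mu\colon E\sma E \to E$ is the multiplication, while the remaining $\varphi_j$ are identities. For the cyclic face $d_q$, I would take $f(j)=j$ for $j<q$ and $f(q)=0$, equipped with the ordering $q<0$ on $f^{-1}(0)$ and $\varphi_0=\mu$. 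For the degeneracy $s_i\colon \mathrm{thh}_q(E) \to \mathrm{thh}_{q+1}(E)$, the map $f$ is the order-preserving injection whose image skips $i+1$; then $f^{-1}(i+1)=\emptyset$, so that $\varphi_{i+1}\colon \bS \to E$ is the unit (the empty smash product being $\bS$) and the other $\varphi_j$ are identities. Finally, the cyclic operator $t_q$ comes from the bijection $j\mapsto j+1 \bmod (q+1)$ with all $\varphi_j$ equal to the identity.

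The cyclic relations on $\mathrm{thh}_\bullet(E)$ then follow by invoking Proposition~\ref{bokstedt_functoriality}: each composite of structure maps equals $\cB$ applied to the composed combinatorial data, using the dictionary order on orderings, so each cyclic identity reduces to two pieces of data. First, an equality of underlying ordered-set maps, which is combinatorial and identical to the verification for the usual cyclic bar construction; and second, an equality of maps built from smash products of the $\varphi_j$'s, which holds by the associativity and unit axioms of the ring bispectrum $E$. For example, the relation $d_i d_{i+1} = d_i d_i$ corresponds on the $\varphi$-side to the associativity of $\mu$, and $d_i s_i = \mathrm{id} = d_{i+1} s_i$ corresponds to the unit axioms.

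The main bookkeeping obstacle is the careful tracking of total orderings on preimages under composition, particularly in the identities involving the cyclic operator, such as $d_j t_q = t_{q-1} d_{j-1}$ for $j>0$, $d_0 t_q = d_q$, and $t_q^{q+1} = \mathrm{id}$. The orderings produced by the dictionary-order convention of Proposition~\ref{bokstedt_functoriality} have to agree on both sides of each identity; this is mechanical but requires care, and proceeds in exactly the same way as for the B\"okstedt-style cyclic bar construction in symmetric spectra of spaces, as noted in the excerpt.
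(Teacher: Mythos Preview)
Your proposal is correct and follows exactly the approach the paper intends: the corollary is marked \qed because it is immediate from Proposition~\ref{bokstedt_functoriality}, and you have simply unpacked that immediacy by writing down the specific $(f,\{\varphi_j\})$ for each cyclic structure map and noting that the cyclic identities reduce to associativity and unitality of $E$ together with the combinatorics of ordered-set maps. The paper gives no proof beyond the sentence preceding the corollary and the brief description of face and degeneracy maps following it, so your level of detail exceeds what the paper provides while remaining faithful to its argument.
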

In particular, each face map concatenates two of the sets $\bld{n_i}$ and applies a multiplication map of orthogonal spectra $E^{n_i} \sma E^{n_{i+1}} \to E^{n_i + n_{i+1}}$ (with the convention that $n_{q+1}=n_0$), while each degeneracy map inserts a $\bld{0}$ and a unit map of orthogonal spectra $\Sph \to E^0$.

\begin{definition}
We write
$\mathrm{thh}(E)$ for the realization of the cyclic object $\mathrm{thh}_\bullet(E)$. In the
case where $E = \Sigma^{\infty}A$ for an orthogonal ring spectrum $A$,
we write $\THH_{\bullet}(A)$ for
$\mathrm{thh}_{\bullet}(\Sigma^{\infty}(A))$ and $\THH(A)$ for its
realization.
\end{definition}
We note that $\THH(A)$ is the orthogonal spectrum version of
B\"okstedt's original definition of topological Hochschild homology
(compare~\cite[Section 4.2]{MR1740756}). As discussed in the introduction, we are primarily interested in this case and in the case of $E = \sh A$.

Of course, since the definition of
$\Omega^{\cI}_{q+1}(E)$ involves smash products of the orthogonal
spectra $E^n$, it will only capture a well-defined homotopy type if
the $E^n$ are flat.
We will now argue the same for the realization.
We say that a map of orthogonal spectra $X \to Y$ is a levelwise $h$-cofibration if each map $X(\R^n) \to Y(\R^n)$ has the unbased homotopy extension property, and that a simplicial orthogonal spectrum is ``good'' if each degeneracy map is a levelwise $h$-cofibration. It is well-known that maps of good simplicial spectra $X_\bullet \overset\sim\to Y_\bullet$ that are stable equivalences on each simplicial level give stable equivalences on their realizations $|X_\bullet| \overset\sim\to |Y_\bullet|$.
\begin{proposition}\label{goodness}
If $E$ is a ring bispectrum and $E^n$ is flat for all $n \geq 0$, then $\mathrm{thh}_\bullet(E)$ is good.
\end{proposition}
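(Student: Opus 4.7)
The plan is to reduce goodness of $\mathrm{thh}_\bullet(E)$ to the fact that, for a flat orthogonal ring spectrum, the unit is a flat cofibration, and then to propagate this cofibrancy through every construction --- smash product, $\mathrm{Map}(S^n,-)$, Bousfield--Kan homotopy colimit, and restriction along a functor of indexing categories --- that enters the definition of $\mathrm{thh}_\bullet(E)$.

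First, since $E$ is a ring bispectrum, restricting the lax monoidal structure on $E$ to the monoidal unit $(\bld{0},\mathbb{R}^0) \in \cI_S \sma \cL_S$ exhibits $E^0 = E(\bld{0},-)$ as an orthogonal ring spectrum with unit $\eta \colon \Sph \to E^0$. The hypothesis that each $E^n$ is flat in particular applies to $E^0$, so Lemma~\ref{lem:unit-is-flat-cof}(i) shows that $\eta$ is a flat cofibration of orthogonal spectra (the degenerate case $E^0 = *$ forces $E$ itself to be trivial and can be set aside).

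Next, by Corollary~\ref{cyclic_structure} together with Proposition~\ref{bokstedt_functoriality}, each degeneracy $s_i \colon \mathrm{thh}_q(E) \to \mathrm{thh}_{q+1}(E)$ is induced by the functor $f_* \colon \cI^{\times(q+1)} \to \cI^{\times(q+2)}$ that inserts $\bld{0}$ at the relevant position, with the unit $\eta$ at the inserted slot and identities elsewhere. Before forming homotopy colimits, this produces a natural transformation $\alpha \colon \Omega^{\cI}_{q+1}(E) \Rightarrow \Omega^{\cI}_{q+2}(E) \circ f_*$ of $\cI^{\times(q+1)}$-diagrams of orthogonal spectra, whose component at $(\bld{n_0},\ldots,\bld{n_q})$ is obtained by applying $\mathrm{Map}(S^{n_0 + \cdots + n_q},-)$ to the map
\[ E^{n_0} \sma \cdots \sma E^{n_q} \to E^{n_0} \sma \cdots \sma E^{n_i} \sma E^0 \sma E^{n_{i+1}} \sma \cdots \sma E^{n_q} \]
that smashes $\eta$ with the identity of the flat orthogonal spectrum $E^{n_0} \sma \cdots \sma E^{n_q}$. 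By Lemma~\ref{lem:flat-preservation}(i) this map is a flat cofibration of orthogonal spectra, hence a based cofibration at each orthogonal level, and $\mathrm{Map}(S^{V'},-)$ preserves such cofibrations. Thus each component of $\alpha$ is a levelwise $h$-cofibration of orthogonal spectra.

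Finally, I would propagate this through the Bousfield--Kan homotopy colimits. Each $\mathrm{thh}_q(E)$ is the geometric realization of a simplicial orthogonal spectrum whose $p$-simplices form a wedge of evaluations of the relevant diagram, with good degeneracies. Wedges and realizations of good simplicial orthogonal spectra preserve levelwise $h$-cofibrations of based spaces, so $\hocolim \alpha$ is a levelwise $h$-cofibration. Composing with the canonical comparison map
\[\hocolim_{\cI^{\times(q+1)}} \bigl(\Omega^{\cI}_{q+2}(E) \circ f_*\bigr) \to \hocolim_{\cI^{\times(q+2)}} \Omega^{\cI}_{q+2}(E) = \mathrm{thh}_{q+1}(E)\]
recovers $s_i$. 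Because $f_*$ is injective on both objects and morphisms, this comparison arises from an inclusion of wedge summands at each simplicial level of the Bousfield--Kan construction, hence is itself a levelwise $h$-cofibration, and $s_i$ is the composite of two such maps. I expect the main technical obstacle to lie precisely in this bookkeeping of two different indexing categories; it can alternatively be sidestepped by left Kan extending $\alpha$ along $f_*$ and arguing with a single natural transformation of $\cI^{\times(q+2)}$-diagrams, at the cost of checking that this Kan extension remains a levelwise $h$-cofibration.
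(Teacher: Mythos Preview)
Your strategy is the same as the paper's: reduce to the unit $\Sph \to E^0$ being a flat cofibration via Lemma~\ref{lem:unit-is-flat-cof}, smash with the remaining flat factors, push the resulting levelwise $h$-cofibration through $\Omega^n$, and then use injectivity of $f_*$ to handle the change of indexing category in the Bousfield--Kan homotopy colimit. The paper packages the last two steps by invoking \cite[Lemma~6.17]{LRRV17} rather than decomposing $s_i$ as you do, but the content is the same.

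Two technical points you pass over need justification, and the paper addresses both explicitly. First, your assertion that $\mathrm{Map}(S^{V'},-)$ preserves levelwise $h$-cofibrations is not automatic: the paper observes that $\Omega(-)$ preserves $h$-cofibrations only \emph{between well-based spaces}, and cites \cite{solomon} for this (the passage to compactly generated weak Hausdorff spaces uses that $h$-cofibrations there are automatically closed). You therefore need to note that the source and target of your smash-product map are flat, hence levelwise well-based, before applying $\Omega^{n_0+\cdots+n_q}$. Second, your claim that the comparison map along $f_*$ is a levelwise $h$-cofibration because it is ``an inclusion of wedge summands'' is incomplete: an inclusion $X \hookrightarrow X \vee Y$ is an $h$-cofibration only when $Y$ is well-based, so you must also verify that every value $\Omega^{n_0+\cdots+n_{q+1}}(E^{n_0}\sma\cdots\sma E^{n_{q+1}})$ is levelwise well-based --- which again comes down to $\Omega$ of a well-based space being well-based. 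These are exactly the two hypotheses isolated in \cite[Lemma~6.17]{LRRV17}, and once you supply them your argument is complete.
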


\begin{proof}
Our degeneracy map is a map of based homotopy colimits, induced by the map of categories $f_\ast\colon \cI^{\times q} \to \cI^{\times (q+1)}$ that inserts a $\bld{0}$ and a map of diagrams
\[ \Omega^{\cI^{\times q}}(E \barsmash \ldots \barsmash E) \to \Omega^{\cI^{\times (q+1)}}(E \barsmash \ldots \barsmash E) \circ f_\ast \]
whose evaluation at an object $(\bld{n_0}, \dots, \bld{n_{q-1}})$ is the map 
\begin{equation}\label{eq:goodness_proof}
\Omega^{n_0+\ldots+n_{q-1}} \left( (E^{n_0} \!\sma\! \ldots \!\sma\! \Sph \!\sma\! \ldots \!\sma\! E^{n_{q-1}}) \to  (E^{n_0} \!\sma\! \ldots \!\sma\! E^0 \!\sma\! \ldots \!\sma\! E^{n_{q-1}}) \right)
\end{equation}
induced by identity maps $E^{n_i} \to E^{n_i}$ and the unit $\Sph \to E^0$.
Since the map $f_\ast$ is injective, it follows from the argument
given in the proof of \cite[Lemma 6.17]{LRRV17} that the map of homotopy
colimits is an $h$-cofibration as soon as for each object $(\bld{n_0},\ldots,
\bld{n_{q-1}})$ the map \eqref{eq:goodness_proof} is a levelwise 
$h$-cofibration, and for each object $(\bld{n_0},\ldots,\bld{n_q})$ the map $* 
\to \Omega^{n_0+\ldots+n_q} (E^{n_0} \sma \ldots \sma E^{n_q})$ is a levelwise $h$-cofibration, i.e., the levels are well-based.

The map $\Sph \to E^0$ is a flat cofibration by Lemma~\ref{lem:unit-is-flat-cof}(i). Therefore, before taking loops, the map \eqref{eq:goodness_proof} is a flat cofibration by Lemma~\ref{lem:flat-preservation}. Both source and target are flat and are therefore levelwise well-based. To conclude that after loops both spectra are well-based and the map is still a levelwise $h$-cofibration, it is sufficient to know that the functor $\Omega(-)$ on based spaces preserves $h$-cofibrations between well-based spaces. For closed $h$-cofibrations in the category of all topological spaces, this is shown in~\cite[Application (vi)]{solomon}. The desired statement in the category of compactly generated weak Hausdorff spaces  $\cT$ we are working in follows because $h$-cofibrations in $\cT$ are automatically closed (see e.g.~\cite[Proposition A.31]{Schwede_global}). 
\end{proof}

Returning to the B\"okstedt smash product of bispectra $X_0,\ldots,X_{q}$, the inclusion of $(\bld{0},\dots, \bld{0})$ in $\cI^{\times (q+1)}$
gives rise to a functor from the terminal category to
$\cI^{\times (q+1)}$, inducing a map of orthogonal spectra
\begin{equation} X_0^0 \sma \ldots \sma X_q^0 \to \cB(X_0,\ldots,X_q).
\end{equation}
It is immediate that the map on the right induced by $(f,\{\varphi_j\})$ agrees with the smash product of the maps $\varphi_j^0$ on the left.
Specializing again to $X_i = E$, we get maps for each $q$
\begin{equation}\label{eq:E-zero-q+1-to-thh-q-E} (E^0)^{\sma (q+1)} \to\mathrm{thh}_q(E)
\end{equation}
which induce a map of cyclic objects
\begin{equation}\label{eq:thh-sh-E-to-By-E-zero}
B^{\mathrm{cy}}_{\bullet}(E^0) \to \mathrm{thh}_\bullet(E) \ .
\end{equation}

If $A$ is an orthogonal ring spectrum, the above discussion now gives us maps of cyclic orthogonal spectra
\begin{equation}\label{eq:THH-model-comparison} 
  B^{\mathrm{cy}}_{\bullet}(A) \to  \mathrm{thh}_{\bullet}(\sh{}A) 
  \ot\mathrm{thh}_{\bullet}(\Sigma^{\infty}A) = \THH_{\bullet}(A). 
\end{equation}
The left-hand map is~\eqref{eq:thh-sh-E-to-By-E-zero} for $E = \sh{}A$, and the right-hand map arises from the morphism of ring bispectra 
$\Sigma^{\infty}A \to \sh{}A$
from~\eqref{eq:suspension-to-shift} and the naturality of the cyclic structure of Corollary~\ref{cyclic_structure}.

\begin{theorem}\label{thm:THH-model-comparison}
 Let $A$ be an orthogonal ring spectrum whose underlying orthogonal spectrum is flat. Then on geometric realizations, the chain~\eqref{eq:THH-model-comparison}
  induces the following chain of stable equivalences of orthogonal
  spectra with $S^1$-action:
\[ B^{\mathrm{cy}}(A) \to  \mathrm{thh}(\sh{}A) 
\ot\mathrm{thh}(\Sigma^{\infty}A) = \THH(A).\]
\end{theorem}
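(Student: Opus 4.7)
The plan is to reduce the statement to a levelwise comparison on the underlying simplicial orthogonal spectra and then analyze the two maps separately, relying on Lemma~\ref{lem:pi-star-orthogonal-detection} as the main detection principle.

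First I would verify that all three cyclic objects are good in the sense used for Proposition~\ref{goodness}, so that stable equivalences on each simplicial level yield stable equivalences on realizations. For $\mathrm{thh}_\bullet(\sh{}A)$ this is Proposition~\ref{goodness} combined with Theorem~\ref{thm:shifts-of-flat}, which ensures that each $(\sh A)^n = \sh^n A$ is flat; the case of $\mathrm{thh}_\bullet(\Sigma^\infty A)$ is analogous, using that each level $A_n$ inherits a CW structure from the flatness of $A$. For $B^{\mathrm{cy}}_\bullet(A)$, goodness follows from the fact that every degeneracy map has the form $\id \sma \cdots \sma \eta \sma \cdots \sma \id$ for the unit $\eta \colon \Sph \to A$, which is a flat cofibration by Lemma~\ref{lem:unit-is-flat-cof}(i); smashing with the flat spectrum $A^{\sma q}$ preserves flat cofibrations by Lemma~\ref{lem:flat-preservation}(i), and flat cofibrations are in particular levelwise $h$-cofibrations.

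For the right-hand map $\mathrm{thh}_q(\Sigma^\infty A) \to \mathrm{thh}_q(\sh{}A)$, I would use that it is induced by the $\pi_*$-isomorphism of bispectra $\Sigma^\infty A \to \sh A$ from Corollary~\ref{cor:shiftandsusp}. Applying Lemma~\ref{lem:pi-star-orthogonal-detection} and commuting the stabilization $\hocolim_\cN \Omega^\cL \sh^l$ past the B\"okstedt homotopy colimit over $\cI^{\times(q+1)}$, the task reduces to a comparison of multi-indexed homotopy colimits of iterated loop spaces of smash products. A telescope-style argument then replaces one factor of $\Sigma^\infty A$ by $\sh A$ at a time, each step being an instance of Lemma~\ref{lem:smashing-with-flat-preserves-diagonal-isos} applied with respect to the remaining smash factors, all of which are flat by Theorem~\ref{thm:shifts-of-flat} and Lemma~\ref{lem:flat-preservation}(i).

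For the left-hand map $B^{\mathrm{cy}}_q(A) = A^{\sma(q+1)} \to \mathrm{thh}_q(\sh{}A)$, which is the inclusion at the object $(\bld{0},\ldots,\bld{0}) \in \cI^{\times(q+1)}$ of the homotopy colimit defining the target, I would apply Lemma~\ref{lem:pi-star-orthogonal-detection} to reduce to showing that
\[ \hocolim_\cN \Omega^\cL \sh^l (A^{\sma(q+1)}) \longrightarrow \hocolim_\cN \Omega^\cL \sh^l (\mathrm{thh}_q(\sh A)) \]
is a weak equivalence of spaces for every $l \geq 0$. Expanding the target and commuting homotopy colimits reveals a multi-variable B\"okstedt-style stabilization of the smash products $\sh^{n_0}A \sma \cdots \sma \sh^{n_q}A$, whose correct derived homotopy type is afforded by Theorem~\ref{thm:shifts-of-flat} together with Lemma~\ref{lem:flat-preservation}. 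The desired equivalence then follows from homotopy cofinality of the diagonal $\cN \hookrightarrow \cN^{\times(q+1)}$ combined with the fact that along this diagonal a B\"okstedt approximation argument identifies the stabilized loop spectra with $A^{\sma(q+1)}$.

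The main obstacle will be the left-hand map, where one must execute a multi-variable B\"okstedt-type approximation argument in which the $(q+1)$ stabilization parameters $n_0, \ldots, n_q$ vary independently rather than in lockstep; identifying the resulting homotopy colimit with $A^{\sma(q+1)}$ requires both the cofinality of the diagonal and the flatness of all iterated smash products of shifts, the latter being precisely the role played by Theorem~\ref{thm:shifts-of-flat}. The right-hand map, by contrast, is a fairly direct consequence of the bispectrum framework developed in Section~\ref{sec:bispectra}.
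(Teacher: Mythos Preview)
Your treatment of goodness and of the right-hand map $\mathrm{thh}_\bullet(\Sigma^\infty A)\to\mathrm{thh}_\bullet(\sh A)$ matches the paper: Proposition~\ref{prop:thh-homotopy-invariant} is precisely the factor-by-factor replacement argument you describe, and goodness is handled with the same ingredients.

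For the left-hand map, however, the paper's Lemma~\ref{lem:Bcy-to-thh-sh-R} takes a shorter and more direct route than yours. Since $\cI$ has an initial object, the classifying space of $\cI^{\times(q+1)}$ is contractible, so the inclusion of the vertex $(\bld{0},\ldots,\bld{0})$ into the homotopy colimit is a stable equivalence provided \emph{every morphism} in the $\cI^{\times(q+1)}$-diagram $\Omega^{\cI}_{q+1}(\sh A)$ is a stable equivalence of orthogonal spectra. For $q=0$ this is exactly semistability of the underlying symmetric spectrum of $A$; for higher $q$ one reduces to the $q=0$ case by comparing $\sh^{m}A\sma\sh^{n}A$ with $\sh^{m+n}(A\sma A)$ via Theorem~\ref{thm:shifts-of-flat} and Lemma~\ref{lem:flat-preservation}. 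This entirely avoids invoking Lemma~\ref{lem:pi-star-orthogonal-detection}, the commutation of $\Omega^{\cL}$ with a non-filtered homotopy colimit, and any cofinality argument.

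Your proposed route for the left-hand map is plausible, but the phrase ``commuting homotopy colimits'' hides a genuine difficulty: loop spaces only commute (up to weak equivalence) with \emph{filtered} homotopy colimits, and $\cI^{\times(q+1)}$ is not filtered. The paper's Proposition~\ref{prop:thh-homotopy-invariant} handles this for the right-hand map by first working over $\cN^{\times(q+1)}$ (which is filtered) and only at the end invoking the Shipley-type cofinality result to upgrade to $\cI^{\times(q+1)}$. If you carry out the analogous reduction for the left-hand map, you will find you need the structure maps of the diagram to be stable equivalences anyway---at which point the paper's one-line argument from contractibility of $B\cI^{\times(q+1)}$ is available and the detour through space-level detection becomes unnecessary.
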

By Lemma~\ref{lem:unit-is-flat-cof}(ii), the Theorem applies to cofibrant associative and cofibrant commutative orthogonal ring spectra. We now set up some auxiliary results and prove the theorem
at the end of the section.

\begin{lemma}\label{lem:Bcy-to-thh-sh-R}
  Let $X$ be a flat orthogonal spectrum and let $q\geq 0$. Then the
  map $X^{\sm (q+1)} \to \mathrm{thh}_q(\sh{}X)$ obtained by
setting $E = \sh{}X$ in~\eqref{eq:E-zero-q+1-to-thh-q-E} is a stable equivalence. 
\end{lemma}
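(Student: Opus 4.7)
The plan is to reduce to the standard fact that every orthogonal spectrum is semistable, after performing a Bökstedt-style cofinality argument along the diagonal of $\cI^{\times (q+1)}$.

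First, I would collect the consequences of flatness. Since $X$ is flat, Theorem~\ref{thm:shifts-of-flat} gives that each shift $\sh^{n_i} X$ is flat, and Lemma~\ref{lem:flat-preservation}(i) then gives that the iterated smash product $\sh^{n_0}X \sm \cdots \sm \sh^{n_q}X$ and the smash product $X^{\sm(q+1)}$ are flat. In particular both sides of the map represent the correct (derived) homotopy type, and smashing with them preserves $\pi_*$-isomorphisms. Next, I would record the natural isomorphism of orthogonal spectra
\[
\sh^{n_0} X \sm \cdots \sm \sh^{n_q} X \;\cong\; \sh^{n_0+\ldots+n_q}\bigl(X^{\sm (q+1)}\bigr),
\]
coming from the fact that the external shift by $V\in\cL_S$ is the Day convolution action of the unit shifted by $V$; under this identification, the map
\[
X^{\sm (q+1)} \to \Omega^{n_0+\ldots+n_q}\bigl(\sh^{n_0}X\sm\cdots\sm\sh^{n_q}X\bigr)
\]
adjoint to the structure map is exactly the $|\vec n|$-fold structure map of the orthogonal spectrum $X^{\sm(q+1)}$.

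To check the $\pi_*$-isomorphism claim, I would apply Lemma~\ref{lem:pi-star-orthogonal-detection} and pass to $\hocolim_{\cN}\Omega^{\cL}(\sh^m(-))$ for each fixed $m\geq 0$. Because homotopy colimits commute, the right-hand side becomes (at each $V\in\cL$) a homotopy colimit over $\cN \times \cI^{\times (q+1)}$ of based mapping spaces into evaluations of $\sh^{n_0}X\sm\cdots\sm\sh^{n_q}X$ at levels determined by $m$ and $V$. The core of the argument is then a Bökstedt approximation: restriction along the diagonal $\Delta\colon \cI \to \cI^{\times (q+1)}$, $\bld n \mapsto (\bld n,\ldots,\bld n)$, induces an equivalence on homotopy colimits. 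Granting this, restriction to the diagonal combined with the identification above transforms the diagram into
\[
n \longmapsto \Omega^{(q+1)n}\sh^{(q+1)n}\bigl(X^{\sm(q+1)}\bigr),
\]
whose homotopy colimit over $\cN$ is $\pi_*$-equivalent to the original spectrum $X^{\sm(q+1)}$ by the standard semistability of orthogonal spectra. The comparison map in the lemma is precisely the canonical map to this iterated stabilization, so combining these steps yields the desired $\pi_*$-isomorphism.

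The main obstacle is the Bökstedt approximation step. It requires controlling the connectivity of the transition maps of the multivariable $\cI^{\times(q+1)}$-diagram in terms of $|\vec n|$. Concretely, one must verify that for flat $X$, the maps arising from enlarging one coordinate $n_i \to m_i$ become highly connected in a way that is uniform enough for the standard cofinality argument in Bökstedt's lemma to apply; flatness enters crucially here because it ensures that the individual structure maps $S^{m_i-n_i}\sm \sh^{n_i}X \to \sh^{m_i}X$ satisfy the required connectivity bounds, and that these bounds are preserved when smashed with the other flat tensor factors. Once this connectivity estimate is in place, the diagonal cofinality is the usual Bökstedt argument and the remaining steps are formal.
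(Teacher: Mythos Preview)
Your proposal contains a genuine error: the claimed natural isomorphism
\[
\sh^{n_0} X \sm \cdots \sm \sh^{n_q} X \;\cong\; \sh^{n_0+\ldots+n_q}\bigl(X^{\sm (q+1)}\bigr)
\]
is false. The shift is \emph{not} given by smashing with the shifted unit; indeed $\sh^V\mathbb{S}\cong\Sigma^V\mathbb{S}$, but $\sh^V X$ and $\Sigma^V X$ already differ at level~$0$, where they are $X(V)$ and $X(0)\sm S^V$ respectively. What is true is that there is a natural \emph{map} $\sh^{n_0}X\sm\cdots\sm\sh^{n_q}X \to \sh^{n_0+\ldots+n_q}(X^{\sm(q+1)})$, and showing it is a stable equivalence is essentially the content of the lemma. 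The paper handles this for $q=1$ by comparing both sides to $(X\sm S^m)\sm(X\sm S^n)\cong(X\sm X)\sm S^{m+n}$ via the suspension-to-shift map~\eqref{eq:suspension-to-shift}, using semistability and Lemma~\ref{lem:flat-preservation}(ii); the general case is analogous.

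There is a second problem with your B\"okstedt approximation step. That lemma requires the structure maps of the $\cI^{\times(q+1)}$-diagram of \emph{spaces} to have connectivity tending to infinity, and flatness does not supply such bounds (a flat orthogonal spectrum can be an $\Omega$-spectrum with arbitrary negative homotopy groups). The paper avoids this entirely by working one categorical level up: the diagram $\Omega^{\cI}_{q+1}(\sh X)$ is a diagram of orthogonal \emph{spectra}, and one shows directly that every arrow in it is a stable equivalence. Since $\cI^{\times(q+1)}$ has an initial object, its classifying space is contractible, so the inclusion of the value at $(\bld{0},\ldots,\bld{0})$ into the homotopy colimit is then automatically a stable equivalence. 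No connectivity estimate or cofinality argument is needed.
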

\begin{proof}
  Since $\cI$ has an initial object, the classifying space of
  $\cI^{\times (q+1)}$ is contractible. Therefore it is sufficient to show that
  with $E = \sh{}X$, every map in the
  $\cI^{\times (q+1)}$-diagram~\eqref{eq:Omega-Iq+1} is a stable equivalence of
  orthogonal spectra.

  When $q=0$, the map induced by $\alpha\colon \bld{m} \to \bld{n}$
  can be identified with the $m$-fold loop of the map
  $X(\mathbb R^m\oplus-) \to \Omega^{\bld{n}-\alpha(\bld{m})}(X(\mathbb
  R^n\oplus-))$
  that is adjoint to the canonical map
  \[ \sh^m X \sm S^{\bld{n}-\alpha(\bld{m})} = X(\mathbb R^m\oplus-) \sm
  S^{\bld{n}-\alpha(\bld{m})} \to X(\mathbb R^n\oplus-) = \sh^nX\ .\]
  That latter map is a $\pi_*$-isomorphism since the underlying
  symmetric spectrum of $X$ is semistable. This shows the $q=0$ case.

  Now let $q=1$. Then the map~\eqref{eq:suspension-to-shift} induces a commutative square
  \[ \xymatrix@-1pc{ (X \sm S^m) \sm (X \sm S^n) \ar[r] \ar[d] & 
    \sh^m X \sm \sh^n X \ar[d] \\
    (X \sm X) \sm S^{m+n} \ar[r] & \sh^{m+n}(X \sm X) \rlap{\ .} }\]
  By Theorem~\ref{thm:shifts-of-flat}, the orthogonal spectra
  $\sh^m X$, $\sh^m X$ and $\sh^{m+n} X$ are
  flat. Since the maps from the suspensions to the shifts are stable
  equivalences by the argument for $q=0$, this implies that the
  horizontal maps are stable equivalences. The left hand vertical map
  is an isomorphism. So we deduce that the right hand vertical map is
  a stable equivalence. This reduces the claim for $q=1$ to the case
  $q=0$.

The case $q > 1$ follows analogously. 
\end{proof} 

\begin{proposition}\label{prop:thh-homotopy-invariant}
  Let $D \to E$ be a $\pi_*$-isomorphism in $\Spsym(\SpO)$. If $E^m$ and $D^m$ are flat orthogonal spectra for all
  $m\geq 0$, then the induced map $\mathrm{thh}_q(D) \to
  \mathrm{thh}_q(E)$ is a $\pi_*$-isomorphism of orthogonal spectra.
\end{proposition}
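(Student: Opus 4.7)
The plan is to reduce to the case $q=0$ by changing one entry of $\cB$ at a time. I interpolate the map $\mathrm{thh}_q(D) \to \mathrm{thh}_q(E)$ through the chain
\[ \cB(D,\ldots,D) \to \cB(E,D,\ldots,D) \to \cdots \to \cB(E,\ldots,E), \]
whose successive maps each replace one copy of $D$ by $E$. Using the $\Sigma_{q+1}$-symmetry of the indexing category $\cI^{\times (q+1)}$ (which permutes the smash factors in the formula for $\mathrm{thh}_q$), each individual map is, up to a canonical isomorphism coming from the symmetry of the smash product of orthogonal spectra, of the form $\cB(Y,X_1,\ldots,X_q) \to \cB(Y',X_1,\ldots,X_q)$ for a $\pi_*$-iso $Y \to Y'$ of bispectra with flat levels and bispectra $X_1,\ldots,X_q$ with flat levels. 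It suffices to prove every such map is a $\pi_*$-iso of orthogonal spectra.

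By Lemma~\ref{lem:pi-star-orthogonal-detection} it is enough to show that $\hocolim_\cN \Omega^\cL \sh^l$ applied to both sides gives weakly equivalent spaces for every $l\geq 0$. Unwinding the expression for $\cB$, this identifies with a homotopy colimit over $(\bld{k},\bld{n_0},\ldots,\bld{n_q}) \in \cN \times \cI^{\times (q+1)}$ of $\Omega^{n_0+\cdots+n_q+k}(Y^{n_0}\sma X_1^{n_1}\sma\cdots\sma X_q^{n_q})(\R^{l+k})$. Separating the outer hocolim over $(\bld{n_1},\ldots,\bld{n_q})$ and commuting $\Omega^{n_1+\cdots+n_q}$ past the inner hocolim over $(\bld{k},\bld{n_0})$ (which is valid because all relevant spaces are well-based, by iterated use of Lemma~\ref{lem:flat-preservation}(i) and the argument in the proof of Proposition~\ref{goodness}), the inner part at a fixed tuple identifies with $\hocolim_\cN \Omega^\cL \sh^l \mathrm{thh}_0(Y\sma F_{\bar n})$, where $F_{\bar n} = X_1^{n_1}\sma\cdots\sma X_q^{n_q}$ is a flat orthogonal spectrum by Lemma~\ref{lem:flat-preservation}(i). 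By Lemma~\ref{lem:smashing-with-flat-preserves-diagonal-isos}, the induced map $Y\sma F_{\bar n} \to Y'\sma F_{\bar n}$ of bispectra is a $\pi_*$-iso with flat levels, so the problem reduces to the case $q=0$.

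For $q=0$, the identification is essentially tautological: unwinding the definitions of $\mathrm{thh}_0$ and $\sh$ gives
\[ \hocolim_\cN \Omega^\cL \sh^l \mathrm{thh}_0(Z) \cong \hocolim_{(\bld{a},\bld{v}) \in \cN \times \cN} \Omega^{a+v} Z^a(\R^{l+v}), \]
which up to a relabeling of dummy variables is precisely the homotopy colimit appearing in the definition of a $\pi_*$-iso of bispectra at parameters $(k,l)=(0,l)$. Ranging over all $l\geq 0$ therefore converts the $\pi_*$-iso hypothesis for bispectra into the criterion of Lemma~\ref{lem:pi-star-orthogonal-detection}, completing the argument.

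The main technical obstacle is the commutation of finite loops with homotopy colimits in the reduction step. This depends on the well-basedness of all spaces involved, guaranteed by repeated use of Lemma~\ref{lem:flat-preservation}(i) and Theorem~\ref{thm:shifts-of-flat}; the flatness hypothesis on the levels of $D$, $E$, and the $X_i$ is essential throughout.
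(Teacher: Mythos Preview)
Your overall strategy—interpolating from $D$ to $E$ one factor at a time and using Lemma~\ref{lem:smashing-with-flat-preserves-diagonal-isos} for each step—is exactly what the paper does. The gap is in how you handle the indexing category $\cI$ versus $\cN$ and the commutation of loops with homotopy colimits.

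Concretely, your very first unwinding already asserts that $\hocolim_{\cN}\Omega^{\cL}\sh^l\,\cB(\ldots)$ ``identifies with'' a homotopy colimit over $\cN\times\cI^{\times(q+1)}$. But $\cB$ is a $\hocolim$ over $\cI^{\times(q+1)}$, so the honest expression is $\hocolim_{k\in\cN}\Omega^k\bigl(\hocolim_{\cI^{\times(q+1)}}\ldots\bigr)$, and passing to $\hocolim_{\cN\times\cI^{\times(q+1)}}$ requires moving $\Omega^k$ inside the $\cI^{\times(q+1)}$-hocolim. Well-basedness does not justify this: loops commute with sequential homotopy colimits (telescopes) because those agree with filtered colimits, but $\cI$ is not filtered. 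The same issue reappears in your ``$q=0$ is tautological'' step, where you write
\[
\hocolim_{\cN}\Omega^{\cL}\sh^l\,\mathrm{thh}_0(Z)\;\cong\;\hocolim_{(\bld a,\bld v)\in\cN\times\cN}\Omega^{a+v}Z^a(\R^{l+v}).
\]
Unwinding $\mathrm{thh}_0$ gives a $\hocolim_{\cI}$ in the $a$-variable, not $\hocolim_{\cN}$, so this is not a relabeling of variables; you have silently replaced $\cI$ by $\cN$ and commuted a loop past the $\cI$-hocolim.

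The paper avoids both problems by working from the outset with the restriction to $\cN^{\times(q+1)}\times\cN$, where the loop/telescope commutation is legitimate, proving the weak equivalence there, and only at the very end invoking (the orthogonal-spectrum version of) Shipley's cofinality result \cite[Proposition 2.2.9]{MR1740756} to pass from $\hocolim_{\cN^{\times(q+1)}}$ to $\hocolim_{\cI^{\times(q+1)}}$. That cofinality step is the missing ingredient in your argument.
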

\begin{proof}
  We let $m\geq 0$ be an integer and write $\Omega^{\cL}_m = \Omega^{\cL}\sh^m$. Applying $\Omega^{\cL}_m$ to $\Omega^{\cI}_{q+1}(E)$ provides a diagram
  \[\begin{split} \Omega^{\cL}_m\Omega^{\cI}_{q+1}(E)\colon &\cI^{\times (q+1)}\times \cL \to
  \cT_*,\\ &(\bld{n_0},\dots, \bld{n_q};V) \mapsto \Omega^{n_0+ \dots +
    n_q+V}((E^{n_0}\sm \dots \sm E^{n_q})(\mathbb R^m \oplus V)) .\end{split}\]
  Via the inclusions $\cN \to \cI$ and $\cN \to \cL$, it
  restricts to an $\cN^{\times(q+1)}\times \cN$-diagram of spaces. We claim
  that $D \to E$ induces a weak equivalence
\begin{equation}\label{eq:thh-homotopy-invariant}
 \hocolim_{\cN^{\times(q+1)}\times \cN} \Omega^{\cL}_m\Omega^{\cI}_{q+1}(D) \to \hocolim_{\cN^{\times(q+1)}\times \cN} \Omega^{\cL}_m\Omega^{\cI}_{q+1}(E) 
\end{equation}
when forming the homotopy colimits of these restricted diagrams. To
see this, we first consider the map of $\cN^{\times(q+1)}\times \cN$-diagrams
\begin{multline*} (n_0,\dots, n_q;k) \mapsto \Big( \Omega^{n_0+ \dots + n_q+k}((D^{n_0}\sm \dots \sm D^{n_{q-1}} \sm D^{n_q})(\mathbb R^m \oplus \mathbb{R}^{k})) \to \\ \Omega^{n_0+ \dots + n_q+k}((D^{n_0}\sm \dots \sm D^{n_{q-1}} \sm E^{n_q})(\mathbb R^m \oplus \mathbb{R}^{k})) \Big)\ .
\end{multline*}
By Lemma~\ref{lem:flat-preservation}(i),
$D^{n_0}\sm \dots \sm D^{n_{q-1}}$ is flat. Hence
Lemma~\ref{lem:smashing-with-flat-preserves-diagonal-isos} implies
that this map induces a weak equivalence when fixing
$(n_0,\dots, n_{q-1})$ and forming $\hocolim_{\cN \times \cN}$ with
respect to the remaining two $\cN$-directions. Hence it also induces an equivalence on
$\hocolim_{\cN^{\times(q+1)}\times \cN}$. In the next step, we fix $(n_0,\dots , n_{q-2}, n_q)$ and argue analogously with the map of $\cN\times \cN$-diagrams induced by $D^{n_{q-1}}\to E^{n_{q-1}}$. Continuing this process leads to a decomposition of~\eqref{eq:thh-homotopy-invariant} into a composite of $q$ weak equivalences.  

Since forming loop spaces commutes up to homotopy with mapping telescopes and homotopy colimits of spectra are formed levelwise, it follows that 
\[
 \hocolim_{\cN} \Omega^{\cL}_m \hocolim_{\cN^{\times(q+1)}} \Omega^{\cI}_{q+1}(D) \to \hocolim_{\cN} \Omega^{\cL}_m \hocolim_{\cN^{\times(q+1)}} \Omega^{\cI}_{q+1}(E) 
\]
is a weak equivalence. Thus Lemma~\ref{lem:pi-star-orthogonal-detection} implies
that $ \Omega^{\cI}_{q+1}(D) \to \Omega^{\cI}_{q+1}(E)$ induces a $\pi_*$-isomorphism after applying $\hocolim_{\cN^{\times(q+1)}}$. By the obvious generalization of ~\cite[Proposition 2.2.9]{MR1740756} from based spaces to orthogonal spectra, it follows that $ \Omega^{\cI}_{q+1}(D) \to \Omega^{\cI}_{q+1}(E)$  induces a $\pi_*$-isomorphism after applying $\hocolim_{\cI^{\times (q+1)}}$.
\end{proof}

\begin{proof}[Proof of Theorem \ref{thm:THH-model-comparison}]
  Lemma~\ref{lem:Bcy-to-thh-sh-R} shows that the first map
  in~\eqref{eq:THH-model-comparison} is a stable equivalence in every
  simplicial degree. Combining Corollary~\ref{cor:shiftandsusp} and
  Proposition~\ref{prop:thh-homotopy-invariant} implies that the
  second map in~\eqref{eq:THH-model-comparison} is a stable
  equivalence in every simplicial degree.
  
So it remains to show that each of the simplicial objects is good. For $B^{\mathrm{cy}}(A)$, the degeneracy maps are flat
  cofibrations by Lemma~\ref{lem:unit-is-flat-cof}(ii) and Lemma~\ref{lem:flat-preservation}, and hence in particular the degeneracy maps are levelwise  
  $h$-cofibrations. For the two remaining cases we apply Proposition~\ref{goodness}. 
  The orthogonal spectrum $(\sh A)^n = \sh^n A$ is flat by 
  Theorem~\ref{thm:shifts-of-flat}, and the suspension spectrum $(\Sigma^{\infty}A)^n = \Sigma^\infty A_n = A_n \sm \mathcal L_S(0,-)$ is flat because $A_n$ is cofibrant as a based space.
\end{proof}

\section{Orthogonal \texorpdfstring{$G$}{G}-spectra and geometric fixed points}\label{sec:orthogonal-G} In this section we review some results about equivariant orthogonal spectra that we will need for the definition of the cyclotomic structures in the next section. 

Let $G$ be a compact Lie group. Let $(\SpO)^G$ denote the category of orthogonal $G$-spectra, defined as orthogonal spectra with continuous left $G$-actions and equivariant maps between them.

Unless otherwise noted, orthogonal $G$-spectra will always be considered as objects of this category, i.e., they are always indexed on the trivial universe. This is not a restriction because for each complete $G$-universe $\mathcal U$, the change of universe functor $\mathcal I_{\R^\infty}^{\mathcal U}$ gives an equivalence between $(\SpO)^G$ and the category of orthogonal $G$-spectra indexed by finite-dimensional representations in $\mathcal U$~\cite[V.Theorem~1.5]{MM02}. 

For each closed normal subgroup $N \leq G$ we write
\[ \Phi^N \colon (\SpO{})^{G} \to (\SpO)^{G/N} \]
for the \textit{geometric fixed points} functor. If $X$ is an orthogonal $G$-spectrum, $\Phi^N(X)$ is defined to be the coequalizer of a diagram
\begin{equation}\label{geometric_fixed_points_definition}
\bigvee_{V,W} \mathcal L_S(W^N,-) \sm \mathcal L_S(V,W)^N \sm X(V)^N 
\rightrightarrows \bigvee_{V} \mathcal L_S(V^N,-) \sm X(V)^N.
\end{equation}
Here $V,W$ run through all finite-dimensional representations in a
complete $G$-universe $\mathcal U$, the $G/N$-space $\mathcal
L_S(V,W)^N$ is the space of $N$-fixed points that arises from
restricting the conjugate $G$-action on $\mathcal L_S(V,W)$ to $N$,
the orthogonal $G/N$-spectrum $\mathcal L_S(W^N,-)$ is the free one on the
$G/N$-space $W^N$, and the $G/N$-space $X(V)^N$ is the space of $N$-fixed
points of the $G$-space $(\mathcal I_{\R^\infty}^{\mathcal U} X)(V) =\mathcal L(\mathbb R^d,V)_+ \sm_{O(d)}
X(\mathbb R^d)$ with diagonal $G$-action and $d = \mathrm{dim} V$. The two parallel arrows are induced by composition in $\mathcal L_S$ and by the structure maps of $X$, respectively.

\begin{remark} 
In~\cite[B.10]{HHR} this functor appears under the name \textit{monoidal geometric fixed points}. The equivalence of the above definition and the one in~\cite[V.Definition~4.1]{MM02} follows from the description of the enriched left Kan extension as a coequalizer (see also ~\cite[Remark 2.11]{ABG15}).
\end{remark}

In all of our examples, $G$ will be the circle group $S^1$ or one of its finite subgroups $C_r \cong \mathbb{Z}/r\mathbb{Z}$, $r \geq 1$. We therefore fix once and for all a complete $S^1$-universe $\mathcal U$, which is automatically a complete $C_r$-universe for all $r \geq 1$, and consistently use this universe for the definition of $\Phi^N$. Under this convention, we get the convenient property that the geometric fixed points functor strictly commutes with forgetting group actions. Specifically, if $N \leq G \leq S^1$, then in the following square each horizontal functor is defined by some coequalizer system, and the square
\[ \xymatrix{
(\SpO{})^{G} \ar[r]^-{\Phi^N} \ar[d]_-*\txt{forget} & (\SpO)^{G/N} \ar[d]^-*\txt{forget} \\
(\SpO{})^{N} \ar[r]^-{\Phi^N} & \SpO{}
} \]
commutes.
This is because $N \leq G$ satisfies Condition 1 from \cite[Proposition 3.1.46]{BDS16}, \cite[Proposition 3.3.57]{Stolz_equivariant}.
We will sometimes refer to the top horizontal functor as the ``relative'' geometric fixed points $\Phi^{N}_{\mathrm{rel}}$ and to the composite through the lower left hand corner as the ``absolute'' geometric fixed points $\Phi^{N}_{\mathrm{abs}}$. 

In fact, because $G$ is abelian, it commutes with the action of $N$ and so acts on $\Phi^N X$. The action of $N \leq G$ is trivial by \cite[Proposition 3.25]{Mal17}, so $\Phi^{N}_{\mathrm{abs}}X$ inherits a $G/N$-action. As we explain in the proof of Lemma \ref{lem:compatible-Cr-realizations} below, $\Phi^{N}_{\mathrm{rel}}X$ and $\Phi^{N}_{\mathrm{abs}}X$ are isomorphic as $G/N$-spectra. We are therefore free to think of geometric fixed points in the ``absolute'' or ``relative'' sense without risk of confusion. This fact that the two notions of $\Phi^N$ coincide is also used implicitly in \cite{ABG15} and \cite{Mal17}. 

\begin{remark}
The property that $\Phi^N$ strictly commutes with forgetful functors holds more generally for certain subgroups of the $n$-torus \cite[Lemma 3.1.42 and Lemma 4.4.20]{BDS16}. For general $N \leq G$, this commutation is only known to hold on the subcategory of cofibrant spectra, because with consistent choices of universe there is a canonical map $\Phi^{N}_{\mathrm{rel}} X \to \Phi^{N}_{\mathrm{abs}} X$ of non-equivariant spectra, and this map is an isomorphism when $X$ is free and therefore also when $X$ is cofibrant.
\end{remark}

We summarize some of the properties of $\Phi^N$ needed later. 
\begin{proposition}\label{prop:properties-geom-fixed}
Let $X$ and $Y$ be orthogonal $G$-spectra, $N \leq G \leq S^1$, and $L$ be a based $G$-space.  
\begin{enumerate}[(i)]
\item The functor $\Phi^N$ preserves cobase changes along levelwise closed inclusions and sequential colimits of levelwise closed inclusions. 
\item There is a canonical natural map $\Phi^N(X) \sm \Phi^N(Y) \to \Phi^N(X \sm Y)$ that turns $\Phi^N$ into a lax symmetric monoidal functor.  
\item\label{it:geom-fixed-points-vs-suspension} There is a canonical isomorphism $\Phi^N (X \sma L) \cong (\Phi^{N} X) \sma L^N$.
\end{enumerate}
\end{proposition}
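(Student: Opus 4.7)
My plan is to verify each of (i), (ii), (iii) by unpacking the coequalizer definition~\eqref{geometric_fixed_points_definition} and then invoking standard properties of the $N$-fixed-point functor $(-)^N$ on based $G$-spaces, together with elementary facts about orthogonal $G$-representations such as $(V\oplus W)^N = V^N \oplus W^N$ and $(S^V)^N = S^{V^N}$. In effect, all three parts should reduce to recognizing that the operations assembling $\Phi^N(X)$ from $X$ (namely levelwise $N$-fixed points, smash products with free $G/N$-spectra of the form $\mathcal L_S(V^N,-)$, wedges, and a single coequalizer) each have the desired interaction with colimits, smash products, and suspension.

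For (i), I would argue that $(-)^N\colon\cT_*^G \to \cT_*^{G/N}$ preserves cobase changes along closed inclusions and sequential colimits of such, because in the compactly generated weak Hausdorff setting closed $G$-inclusions restrict to closed inclusions on fixed points and these colimits are computed in the underlying category. Wedges, smash products with a fixed based space, and coequalizers are colimit-preserving, so plugging these into~\eqref{geometric_fixed_points_definition} and a straightforward diagram chase yield the statement for $\Phi^N$. For (iii), I would use that for the diagonal $G$-action one has $(X(V)\sm L)^N = X(V)^N \sm L^N$; substituting this into~\eqref{geometric_fixed_points_definition} and pulling $L^N$ out of the wedge and the coequalizer (both of which commute with smashing by a based space) immediately gives the claimed isomorphism $\Phi^N(X\sm L) \cong \Phi^N(X) \sm L^N$.

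For (ii), I plan to construct the lax monoidal map from three ingredients: the lax symmetric monoidal natural transformation $A^N \sm B^N \to (A\sm B)^N$ on based $G$-spaces (with diagonal action on the target); the identification $(V\oplus W)^N = V^N \oplus W^N$; and the canonical map $\mathcal L_S(V^N,-) \sm \mathcal L_S(W^N,-) \to \mathcal L_S(V^N\oplus W^N,-)$ inherent in the Day convolution product on orthogonal spectra. Together with the Day-convolution map $X(V)\sm Y(W)\to (X\sm Y)(V\oplus W)$, these assemble, for each pair $(V,W)$, into a candidate morphism of wedge summands that matches the indexing of~\eqref{geometric_fixed_points_definition} for $\Phi^N(X\sm Y)$. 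After checking compatibility with the parallel arrows in the coequalizer on both sides, the universal property gives a unique induced map $\Phi^N(X)\sm\Phi^N(Y) \to \Phi^N(X\sm Y)$.

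The main obstacle will be verifying, in (ii), that this assembly satisfies the associativity, unit, and symmetry coherences required for a lax symmetric monoidal structure. These are forced by the corresponding coherences for $(-)^N$ as a lax symmetric monoidal functor on $\cT_*^G$ and for $\oplus$ as a symmetric monoidal structure on $\cL$, but the verification requires careful bookkeeping with the coequalizer relations, the composition in $\mathcal L_S$, and the symmetry isomorphism $V\oplus W \cong W\oplus V$ restricted to $N$-fixed subspaces. Everything else in the proof should be essentially formal.
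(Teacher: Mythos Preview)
Your proposal is correct, and your direct unpacking of the coequalizer formula~\eqref{geometric_fixed_points_definition} is a legitimate way to establish all three parts. In particular, the crucial input for (iii) --- that $(X(V)\sm L)^N \cong X(V)^N \sm L^N$ for based $G$-spaces --- is valid: a non-basepoint $[x,l]$ in the smash is $N$-fixed precisely when both coordinates are, and fixed points commute with quotients by closed $G$-subspaces in the compactly generated setting. The rest is, as you say, essentially formal.

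The paper takes a different route: it does not prove the proposition directly but simply cites the literature (Mandell--May \S V.4, Hill--Hopkins--Ravenel Appendix~B, and Malkiewich) for each part. Your approach is more self-contained and transparent about why the claims hold, while the paper's is more economical for results that are by now standard. One point the paper adds that you do not address is the \emph{uniqueness} (not merely existence) of the maps in (ii) and (iii), which it imports from~\cite{Mal17}; this uniqueness is used later, e.g.\ in the proof of Proposition~\ref{compatibility}, to avoid checking that various instances of these maps constructed in different ways actually agree. Your construction is canonical in the sense of being natural and choice-free, but if the rest of the paper relies on uniqueness in the stronger sense (that any natural transformation with the right behavior on free spectra must equal yours), you would need to add that argument or cite it.
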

\begin{proof}
Part (i) is discussed in \cite[\S V.4]{MM02} and \cite[Proposition B.197]{HHR}, part (ii) is \cite[Proposition V.4.7]{MM02} or \cite[(B.198)]{HHR}, and part (iii) is  \cite[(B.196)]{HHR}. The uniqueness of the maps in (ii) and (iii) is from \cite[Theorem 3.20 and Remark 3.21]{Mal17} and the above observation that $\Phi^N$ may be interpreted in the ``absolute'' sense.\end{proof}

The following result about the geometric fixed points of smash powers is a key
property of the geometric fixed points functor. 
\begin{proposition}\label{prop:diagonal-iso}
Let $X$ be an orthogonal spectrum and let $r \geq 1$. Then the diagonal map $X \to \Phi^{C_r}(X^{\sm r})$ is an isomorphism 
if $X$ is flat. 
\end{proposition}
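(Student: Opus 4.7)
The plan is to proceed by induction along a flat cell structure of $X$. By Proposition~\ref{prop:properties-geom-fixed}(i), the functor $\Phi^{C_r}$ commutes with cobase changes along levelwise closed inclusions and with sequential colimits of such, and the same holds for the $r$-fold smash power in each variable. The diagonal $\Delta\colon X \to \Phi^{C_r}(X^{\sm r})$ is natural, so both sides behave well under the colimits used to build a flat $X$ from its generating cells. The essential point in the induction step is that when attaching a flat cell to a subspectrum $X'$, the ``mixed'' strata appearing in the canonical filtration of $(X' \cup_{\partial} \mathrm{cell})^{\sm r}$ are $C_r$-free and hence killed by $\Phi^{C_r}$, leaving only the ``all-old'' and ``all-new'' strata to which the inductive hypothesis applies; this is the standard Hill--Hopkins--Ravenel style analysis of the norm.

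The argument therefore reduces to verifying the claim on a free orthogonal spectrum $F_V L := \mathcal{L}_S(V,-) \sm L$, where $L$ is a based CW space with trivial $C_r$-action. Here the Day convolution formula gives a natural $C_r$-equivariant isomorphism
\[
(F_V L)^{\sm r} \;\cong\; \mathcal{L}_S(V^{\oplus r}, -) \sm L^{\sm r},
\]
with $C_r$ cyclically permuting the summands of $V^{\oplus r}$ and the smash factors of $L^{\sm r}$. Applying $\Phi^{C_r}$ and using Proposition~\ref{prop:properties-geom-fixed}(iii) with the pointed $C_r$-space $L^{\sm r}$ yields
\[
\Phi^{C_r}\bigl((F_V L)^{\sm r}\bigr) \;\cong\; \Phi^{C_r}\mathcal{L}_S(V^{\oplus r}, -) \sm (L^{\sm r})^{C_r},
\]
and the diagonal identifies $(L^{\sm r})^{C_r}$ with $L$.

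It remains to compute $\Phi^{C_r}\mathcal{L}_S(V^{\oplus r}, -)$ via the coequalizer definition~\eqref{geometric_fixed_points_definition}. For any $C_r$-representation $W$, the $C_r$-fixed isometries $\mathcal{L}(V^{\oplus r}, W)^{C_r}$ are precisely $r$-tuples of linear isometries $V \to W$ sharing the same image, that image necessarily landing in $W^{C_r}$; this identifies $\mathcal{L}(V^{\oplus r},W)^{C_r}$ with $\mathcal{L}(V, W^{C_r})$ via the diagonal $V \hookrightarrow V^{\oplus r}$, and a parallel identification holds at the level of Thom spaces. Passing to coequalizers produces a natural isomorphism $\Phi^{C_r}\mathcal{L}_S(V^{\oplus r}, -) \cong \mathcal{L}_S(V, -)$, realized by the diagonal. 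Chasing these three identifications back through the free-spectrum case shows that the composite agrees with $\Delta$, completing the proof for $X = F_V L$.

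The main obstacle is the cell-induction step: one must check carefully that the mixed strata of the $r$-fold smash power of a flat cell attachment (in particular for cells involving a nontrivial isotropy $H \leq O(V)$) carry $C_r$-actions that Proposition~\ref{prop:properties-geom-fixed}(i) permits us to discard, so that the induction reduces to the free case handled above. This equivariant bookkeeping is standard but is the technical heart of the argument and makes essential use of flatness.
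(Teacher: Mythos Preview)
The paper does not prove this proposition; it cites \cite[Proposition B.209]{HHR} for the construction of the diagonal and \cite[Theorem 3.2.16]{BDS16} or \cite[Theorem 3.4.26]{Stolz_equivariant} for the isomorphism. Your sketch follows the strategy of those references: induct over a flat cell structure, verify the claim on generating cells, and observe that the mixed strata in the $r$-fold smash of a pushout are induced from proper subgroups of $C_r$ and hence annihilated by~$\Phi^{C_r}$.

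There is, however, a concrete error in your free-spectrum computation. The asserted identification $\cL(V^{\oplus r}, W)^{C_r} \cong \cL(V, W^{C_r})$ is false. A $C_r$-fixed isometry $\phi\colon V^{\oplus r} \to W$ is a $C_r$-\emph{equivariant} isometry; writing $\phi = (\phi_1,\dots,\phi_r)$ with $\phi_i\colon V \to W$, equivariance forces $\phi_{i+1} = g_W^{-1}\phi_i$ for the generator $g$, so the images of the summands are mutually orthogonal and cyclically permuted by~$C_r$, not equal and contained in~$W^{C_r}$. Concretely, take $r=2$, $V=\bR$, and $W=\bR^2$ with the swap action: the set of $C_2$-equivariant isometries $\bR^2 \to \bR^2$ has four elements, while $\cL(\bR, W^{C_2})$ has two. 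The conclusion $\Phi^{C_r}\cL_S(V^{\oplus r}, -) \cong \cL_S(V, -)$ is nonetheless correct: it is the specialization of the general identity $\Phi^N \cL_S(U,-) \cong \cL_S(U^N,-)$ for an $N$-representation $U$, which one obtains from the coequalizer~\eqref{geometric_fixed_points_definition} by a co-Yoneda reduction (the summand $W = U$ already surjects onto the coequalizer), see \cite[\S V.4]{MM02}. The term-by-term identification you attempt simply does not hold before passing to the coend.

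You are right to flag the semifree cells, those with nontrivial isotropy $H \leq O(V)$, as the technical heart; your case $\cL_S(V,-)\sm L$ does not literally cover them, and the cited references handle them directly.
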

The proposition generalizes to arbitrary finite groups $G$ (see e.g.~\cite[Proposition B.209]{HHR}), but we shall not make use of this extra generality. 
\begin{proof}[Proof of Proposition~\ref{prop:diagonal-iso}]
The diagonal map is for example constructed in ~\cite[Proposition B.209]{HHR}, and the statement that
it is an isomorphism for flat $X$ can be found in \cite[Theorem 3.2.16]{BDS16} or \cite[Theorem 3.4.26]{Stolz_equivariant}.
\end{proof}

We recall that orthogonal $G$-spectra are endowed with a stable model structure~\cite[III.4.2]{MM02} in which the weak equivalences are measured by the homotopy groups
\[ \pi_k^H(X) = \begin{cases}\quad
\underset{V \subset \mathcal U}\colim\, \pi_k([\Omega^V X(V)]^H)  &\text{if } k \geq 0 \\
\underset{V \subset \mathcal U,\ \R^{|k|} \subset V}\colim\, \pi_0([\Omega^{V-\R^{|k|}} X(V)]^H)  &\text{if } k < 0,\end{cases} \]
where the colimits are taken along inclusions of finite-dimensional subspaces of $\mathcal U$, and again $\cI_{\R^\infty}^{\mathcal U}$ is used implicitly to describe the levels $X(V)$.

The geometric fixed points functors $\Phi^H$ are not left adjoints, but they preserve weak equivalences between the cofibrant spectra in this model structure. We therefore refer to $\Phi^H (X^c)$, where $X^c$ is any cofibrant replacement of $X$, as the left-derived geometric fixed points $\mathbb{L}\Phi^H(X)$. A map of $G$-spectra $X \to Y$ is a weak equivalence precisely when it induces an equivalence on the derived geometric fixed points $\mathbb{L}\Phi^H$ for all closed subgroups $H \leq G$ (e.g. \cite[Theorem XVI.6.4]{alaska}). 

For a $G$-representation $V$, we get a natural interchange morphism  
\begin{equation}\label{eq:Phi-G-vs-loops} \Phi^{G} (\Omega^V X) \to \Omega^{V^{G}} \Phi^{G} (\Sigma^V \Omega^V X) \to \Omega^{V^{G}} \Phi^{G} X \end{equation}
where the first map is the adjoint of the isomorphism in Proposition~\ref{prop:properties-geom-fixed}\eqref{it:geom-fixed-points-vs-suspension} for $L = S^V$ and the second map is the counit.
We derive the interchange by assuming that $X$ is cofibrant and by inserting a cofibrant replacement of $\Omega^V X$ as shown.
\begin{equation}\label{eq:derived-Phi-G-vs-loops} \Phi^{G} ((\Omega^V X)^c) \to \Omega^{V^{G}} \Phi^{G} (\Sigma^V (\Omega^V X)^c) \to \Omega^{V^{G}} \Phi^{G} (\Sigma^V \Omega^V X) \to \Omega^{V^{G}} \Phi^{G} X. \end{equation}
\begin{lemma}\label{lem:PhiG-vs-OmegaV} Let $X$ be an orthogonal $G$-spectrum such that a cofibrant replacement $X^c \to X$ in $(\SpO{})^G$ induces a stable equivalence $\Phi^G(X^c) \to \Phi^G(X)$. Then the composite \eqref{eq:derived-Phi-G-vs-loops} is a stable equivalence. 
\end{lemma}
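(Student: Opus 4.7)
Let me write $Y = \Omega^V X$ so that the composite~\eqref{eq:derived-Phi-G-vs-loops} reads
\[ \Phi^G(Y^c) \xrightarrow{\alpha} \Omega^{V^G}\Phi^G(\Sigma^V Y^c) \xrightarrow{\beta} \Omega^{V^G}\Phi^G(\Sigma^V\Omega^V X) \xrightarrow{\gamma} \Omega^{V^G}\Phi^G(X). \]
The plan is to show that $\alpha$ and $\gamma\circ\beta$ are each stable equivalences, from which the claim follows by composition.

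For $\alpha$, by construction it is the adjoint under $\Sigma^{V^G}\dashv\Omega^{V^G}$ of the isomorphism $\Sigma^{V^G}\Phi^G(Y^c)\cong \Phi^G(\Sigma^V Y^c)$ from Proposition~\ref{prop:properties-geom-fixed}(\ref{it:geom-fixed-points-vs-suspension}) with $L=S^V$. Thus $\alpha$ factors as the unit $\Phi^G(Y^c)\to \Omega^{V^G}\Sigma^{V^G}\Phi^G(Y^c)$ followed by $\Omega^{V^G}$ of this isomorphism. Since $V^G$ is a trivial representation, the adjunction is the standard suspension-loop adjunction on $\SpO$, and its unit is a stable equivalence on any orthogonal spectrum (the induced map on $\pi_\ast$ is the identity after reindexing the sequential colimit that defines $\pi_\ast$). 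So $\alpha$ is a stable equivalence.

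For $\gamma\circ\beta$, unraveling the definitions shows that this composite equals $\Omega^{V^G}\Phi^G(c)$, where $c\colon \Sigma^V Y^c\to X$ is the $G$-equivariant map obtained by applying $\Sigma^V$ to the cofibrant replacement $Y^c\to Y=\Omega^V X$ and composing with the counit $\Sigma^V\Omega^V X\to X$. The first factor is a stable equivalence because $\Sigma^V$ is a Quillen left adjoint, and the counit is a stable equivalence because $\Sigma^V\dashv\Omega^V$ is a Quillen equivalence on $(\SpO)^G$; hence $c$ is a stable equivalence of $G$-spectra with cofibrant source (the source is cofibrant since $\Sigma^V$ preserves cofibrant objects). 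Choosing $X^c\to X$ as an acyclic fibration, the lifting axiom produces a map $\tilde c\colon \Sigma^V Y^c\to X^c$ with $(X^c\to X)\circ\tilde c = c$; by $2$-out-of-$3$, $\tilde c$ is a stable equivalence between cofibrant $G$-spectra, so $\Phi^G(\tilde c)$ is a stable equivalence. Composing with the hypothesis that $\Phi^G(X^c)\to \Phi^G(X)$ is a stable equivalence yields that $\Phi^G(c)$ is a stable equivalence. Finally $\Omega^{V^G}$ preserves $\pi_\ast$-isomorphisms of orthogonal spectra, since it induces a shift on the sequential colimit defining $\pi_\ast$, so $\gamma\circ\beta$ is a stable equivalence.

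The main subtlety is the identification of $\alpha$ with the unit of the non-equivariant suspension-loop adjunction via the iso of Proposition~\ref{prop:properties-geom-fixed}(\ref{it:geom-fixed-points-vs-suspension}), and the lifting argument producing $\tilde c$ so that the hypothesis $\Phi^G(X^c)\simeq\Phi^G(X)$ can be invoked to replace $X$ by $X^c$ inside $\Phi^G$. Everything else is standard equivariant stable homotopy theory, and composing the two stable equivalences gives the lemma.
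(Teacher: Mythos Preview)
Your proof is correct and follows essentially the same strategy as the paper: split the composite into the map $\alpha$ and the composite $\gamma\circ\beta = \Omega^{V^G}\Phi^G(c)$, and handle each piece separately using that $\Omega^{V^G}$ preserves all stable equivalences and that $\Phi^G$ preserves stable equivalences between cofibrant $G$-spectra. The only organizational difference is where the hypothesis on $X$ is invoked: the paper begins by reducing to the case of cofibrant $X$ (so that the target of $c$ is already cofibrant), whereas you keep $X$ general and at the end lift $c$ through an acyclic fibration $X^c\to X$ to a map $\tilde c$ between cofibrant objects, then compose $\Phi^G(\tilde c)$ with the assumed equivalence $\Phi^G(X^c)\to\Phi^G(X)$. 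Both routes are equally valid and amount to the same argument.
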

\begin{proof} By the assumption on $X$ it is sufficient to show that \eqref{eq:derived-Phi-G-vs-loops} is a stable equivalent for cofibrant $X$. Since $\Omega^{V^G}$ preserves stable equivalences between all objects, the first map in \eqref{eq:derived-Phi-G-vs-loops} is a stable equivalence since it is the adjoint along a Quillen equivalence of 
an isomorphism with cofibrant source. The remaining two maps are $\Omega^{V^G} \Phi^G$ of the composite $\Sigma^V (\Omega^V X)^c \overset\sim\to \Sigma^V \Omega^V X \overset\sim\to X$. As the source and target are cofibrant $G$-spectra, we still get an equivalence after applying $\Omega^{V^G} \Phi^G$. 
\end{proof}

Below we use the theory of $G$-diagrams in the sense of \cite{DM16}. Let $G$ be a finite group and $\mathcal C$ a category with a (strict) $G$-action. We recall that a $G$-diagram of orthogonal spectra is a functor $Z \colon \mathcal C \to \SpO{}$ equipped with maps of orthogonal spectra $\phi_g\colon Z_c\to Z_{gc}$ for every $g\in G$ and every object $c$ of $\mathcal C$. These maps are natural with respect to the morphisms of $\mathcal C$ and they satisfy $\phi_1=\id$ and $\phi_h\phi_g=\phi_{hg}$ for every $g,h\in G$. We observe that in particular the spectrum $Z_c$ has an action of the stabilizer group $G_c=\{g\in G\ |\ gc=c\}$, and therefore it is an object of $(\SpO)^{G_c}$. In particular if $c$ is a fixed point, $Z_c$ is a $G$-spectrum. The homotopy colimit of $Z \colon \mathcal C \to \SpO{}$, defined as the Bousfield-Kan formula
\[\hocolim_{\mathcal C} Z=
|\bigvee\limits_{\underline{c}\in N\mathcal C} Z_{c_q}|
\]
where $\underline{c} = (c_q \to \dots \to c_0)$ denotes a $q$-simplex in the nerve of $\mathcal C$,
inherits a $G$-action from the maps $\phi_g$ (\cite[Definition 1.16]{DM16}). The action of $g\in G$ is defined as the geometric realization of the simplicial map that takes the $\underline{c}$-summand to the $g\underline{c}=(gc_q \to \dots \to gc_0)$-summand via the map $\phi_{g}$.

\begin{lemma}\label{lem:hocolim-PhiG-interchange}
Let $G$ be a finite group, $\mathcal C$ a category with $G$-action, $Z \colon \mathcal C \to \SpO{}$ a $G$-diagram, and $\Delta \colon \mathcal C^G \to \mathcal C$ the fixed points inclusion. Then there is a natural isomorphism 
\[\Phi^{G} \hocolim_{\mathcal C} Z  \xrightarrow{\iso} \hocolim_{\mathcal C^G} \Phi^{G} (Z\circ \Delta)\, .\]
\end{lemma}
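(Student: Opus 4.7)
The plan is to verify the isomorphism at each simplicial level of the Bousfield--Kan formula and then pass to realization. Let $B_\bullet$ denote the simplicial orthogonal $G$-spectrum with $B_q=\bigvee_{\underline c\in N_q\mathcal C} Z_{c_q}$, equipped with the $G$-action that permutes summands via $g\cdot\underline c=g\underline c$ and acts on the $\underline c$-summand by $\phi_g\colon Z_{c_q}\to Z_{gc_q}$, so that $|B_\bullet|=\hocolim_{\mathcal C} Z$. Similarly $\hocolim_{\mathcal C^G}\Phi^G(Z\circ\Delta)=|B_\bullet^G|$ for $B_q^G=\bigvee_{\underline c\in N_q(\mathcal C^G)}\Phi^G Z_{c_q}$. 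The first step is to show that $\Phi^G$ commutes with geometric realization of simplicial orthogonal $G$-spectra: realization is assembled via iterated pushouts along levelwise closed inclusions and a sequential colimit of the same, from smashes $X_n\sma\Delta^n_+$. Because $\Delta^n_+$ carries the trivial $G$-action, Proposition~\ref{prop:properties-geom-fixed}(iii) gives natural isomorphisms $\Phi^G(X_n\sma\Delta^n_+)\cong \Phi^G X_n\sma\Delta^n_+$, and Proposition~\ref{prop:properties-geom-fixed}(i) ensures compatibility with the attaching pushouts and sequential colimit, yielding a natural isomorphism $\Phi^G|X_\bullet|\cong|\Phi^G X_\bullet|$.

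Next, at each simplicial level $q$ I would identify $\Phi^G B_q$ with $B_q^G$. Decompose $N_q\mathcal C$ into $G$-orbits, so that $B_q$ correspondingly splits as a wedge of sub-wedges; Proposition~\ref{prop:properties-geom-fixed}(i), together with the presentation of an infinite wedge as a sequential colimit of finite sub-wedges along closed inclusions, lets $\Phi^G$ commute past this decomposition. For a $G$-fixed simplex $\underline c$ the corresponding summand is $Z_{c_q}$ with its natural $G$-action (coming from the full stabilizer), contributing $\Phi^G Z_{c_q}$. For an orbit $G\underline c$ with proper stabilizer $H=G_{\underline c}\subsetneq G$, the sub-wedge is $G$-equivariantly isomorphic to the induced spectrum $G_+\sma_H Z_{c_q}$, whose geometric $G$-fixed points vanish. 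Combined with the identification $(N\mathcal C)^G=N(\mathcal C^G)$ --- a chain of morphisms is $G$-fixed precisely when each object and each morphism is $G$-fixed and hence lies in $\mathcal C^G$ --- this matches the $G$-fixed summands with those indexed by $N_q(\mathcal C^G)$, giving $\Phi^G B_q\cong B_q^G$ naturally in $q$.

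Assembling the two steps produces $\Phi^G\hocolim_{\mathcal C} Z = \Phi^G|B_\bullet|\cong|\Phi^G B_\bullet|\cong|B_\bullet^G|=\hocolim_{\mathcal C^G}\Phi^G(Z\circ\Delta)$, naturally in $Z$, which is the asserted isomorphism. The main obstacle will be the vanishing $\Phi^G(G_+\sma_H Y)\cong *$ for proper $H\subsetneq G$: this is a standard property of geometric fixed points, reflecting that the $G$-fixed subset of a $G$-orbit with proper stabilizer is empty, but to establish it cleanly within the present framework one would either invoke a standard reference such as~\cite{MM02} or inspect the coequalizer~\eqref{geometric_fixed_points_definition} directly to check that the contributions from non-$G$-fixed summands collapse to the basepoint.
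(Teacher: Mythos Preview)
Your proof is correct and follows the same two-step strategy as the paper: commute $\Phi^G$ past geometric realization (using that $\Phi^G$ preserves cobase changes along levelwise closed inclusions and sequential colimits thereof), then past the indexed wedge at each simplicial level, and finally invoke $(N\mathcal C)^G = N(\mathcal C^G)$. The only difference is in how the wedge step is justified: rather than decomposing $N_q\mathcal C$ into $G$-orbits and invoking the vanishing $\Phi^G(G_+\sma_H Y)\cong *$ for proper $H\subsetneq G$, the paper observes that $\Phi^G$ commutes with $G$-indexed wedges directly from the coequalizer formula~\eqref{geometric_fixed_points_definition}, since space-level fixed points already satisfy $\bigl(\bigvee_{s\in S} X_s\bigr)^G \cong \bigvee_{s\in S^G} X_s^G$ for a $G$-set $S$, and the remaining ingredients of the coequalizer (smashing with free spectra and passing to the coequalizer) commute with wedges. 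This direct route sidesteps the obstacle you flagged; your orbit-decomposition argument is also valid, but the vanishing you need is most quickly seen by the same observation, namely that $(G_+\sma_H Y(V))^G=*$ whenever $H\subsetneq G$.
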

\begin{proof}
The isomorphism arises as the composition of the canonical isomorphisms
\begin{eqnarray*}
\Phi^{G} \hocolim_{\mathcal C} Z&=&
\Phi^{G}|\bigvee\limits_{\underline{c}\in N\mathcal C} Z_{c_q}|\cong
|\Phi^{G}\bigvee\limits_{\underline{c}\in N\mathcal C} Z_{c_q}|\cong
|\bigvee\limits_{\underline{c}\in (N\mathcal C)^G} \Phi^{G}Z_{c_q}|
\\&=&
|\bigvee\limits_{\underline{c}\in N(\mathcal C^G)} \Phi^{G}Z_{c_q}|
=\hocolim_{\mathcal C^G} \Phi^{G} (Z\circ \Delta).
\end{eqnarray*}
For the first isomorphism we use that $\Phi^G$ commutes with geometric realizations since it preserves cobase change along closed inclusions and sequential colimits of closed inclusions. The fact that $\Phi^{G}$ commutes with indexed coproducts can be immediately verified from the coend formula defining $\Phi^{G}$.
\end{proof}

To set up a final compatibility result involving $\Phi^G$, we let
$\Lambda_r$ be the variant of the cyclic category discussed in
~\cite[Definition 1.5]{BHM93} and recall that the realization of a
$\Lambda_r$-space has a canonical $\mathbb R/r\mathbb Z$-action that
extends a $C_r$-action defined on the underlying simplicial space.

Let $Y_{\bullet} \colon \Lambda_r^{\op} \to \SpO$ be a
$\Lambda_r$-object in orthogonal spectra. Since the $C_r$-action also
commutes with the $C_{rn}$-action in simplicial degree $n-1$, we may
view $Y_{\bullet}$ as a $\Lambda_r$-object in orthogonal $C_r$-spectra. Taking
(absolute) $C_r$-geometric fixed points in each level gives rise to a
$\Lambda_r$-orthogonal spectrum $\Phi^{C_r}(Y_{\bullet})$. Its realization 
$|\Phi^{C_r}(Y_{\bullet})|$ has an $S^{1}$-action with trivial underlying 
$C_r$-action, and thus an $S^1/C_r$-action.  On the other hand, we may apply 
the (relative) $\Phi^{C_r}$ to the orthogonal $S^1$-spectrum~$|Y_{\bullet}|$. 
\begin{lemma}\label{lem:compatible-Cr-realizations}
In this situation, there is a natural $S^1/C_r$-equivariant 
isomorphism $|\Phi^{C_r}(Y_{\bullet})| \cong \Phi^{C_r}|Y_{\bullet}|$. 
\end{lemma}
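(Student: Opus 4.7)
The plan is to deduce the isomorphism by first proving it on underlying orthogonal spectra and then verifying $S^1/C_r$-equivariance by naturality.

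First I would show that $\Phi^{C_r}$ commutes with the geometric realization of any simplicial orthogonal spectrum $Z_\bullet$. Indeed, $|Z_\bullet|$ is a sequential colimit, along levelwise closed inclusions, of pushouts of levelwise closed inclusions coming from the skeletal filtration, and Proposition~\ref{prop:properties-geom-fixed}(i) says precisely that $\Phi^{C_r}$ preserves both these types of colimit; this is the same observation that is invoked in the proof of Lemma~\ref{lem:hocolim-PhiG-interchange}. Specialised to $Z_\bullet = Y_\bullet$, viewed as a simplicial object in orthogonal $C_r$-spectra by restricting the $C_{rn}$-action in degree $n-1$ to the subgroup $C_r$, this produces a natural isomorphism $\Phi^{C_r}|Y_\bullet| \cong |\Phi^{C_r}(Y_\bullet)|$ of underlying orthogonal spectra.

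To check the equivariance, I would unwind the two $S^1/C_r$-actions and see that both are assembled by the same recipe from compatible data. On the left, $|Y_\bullet|$ carries an $\mathbb R/r\mathbb Z$-action coming from the $\Lambda_r$-structure, whose restriction to $C_r$ is the levelwise $C_r$-action; this $C_r$-action is killed by $\Phi^{C_r}$, leaving a residual $(\mathbb R/r\mathbb Z)/C_r$-action. On the right, the commutation of $\Phi^{C_r}$ with forgetful functors between equivariant categories (noted earlier in Section~\ref{sec:orthogonal-G}, since all groups in sight lie in $S^1$) equips each $\Phi^{C_r}(Y_{n-1})$ with the quotient $C_{rn}/C_r \cong C_n$-action induced from the original $C_{rn}$-action on $Y_{n-1}$; thus $\Phi^{C_r}(Y_\bullet)$ is a cyclic object in orthogonal spectra, and its realization carries an $\mathbb R/\mathbb Z$-action. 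Since the order-$rn$ cyclic operator $\tau_n$ on $Y_{n-1}$ descends under $\Phi^{C_r}$ to the order-$n$ cyclic operator on $\Phi^{C_r}(Y_{n-1})$, and both $S^1$-actions are built from these operators and the rotation of the standard simplices by the same formula, the isomorphism from step one intertwines the two actions under the standard identification $(\mathbb R/r\mathbb Z)/C_r \cong \mathbb R/\mathbb Z$.

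The main obstacle is the careful bookkeeping needed to match the two $S^1$-action formulas, in particular the normalisation of the angle of rotation under the identification $(\mathbb R/r\mathbb Z)/C_r \cong \mathbb R/\mathbb Z$ by multiplication by $1/r$. This bookkeeping is substantially streamlined by the fact that in our torus setting $\Phi^{C_r}$ is unambiguously both ``absolute'' and ``relative'' (i.e.\ commutes strictly with forgetful functors), so that the residual $C_n$-action on $\Phi^{C_r}(Y_{n-1})$ is literally $\Phi^{C_r}$ applied to the $C_{rn}$-action on $Y_{n-1}$; naturality of the isomorphism produced in step one then automatically promotes it to the claimed $S^1/C_r$-equivariant isomorphism.
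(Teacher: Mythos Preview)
Your approach is essentially the same as the paper's: both arguments first obtain the underlying isomorphism by commuting $\Phi^{C_r}$ with geometric realization (using that $\Phi^{C_r}$ preserves the relevant colimits), and then verify $S^1/C_r$-equivariance. The difference lies only in how the equivariance is checked. You argue abstractly via naturality and the fact that the cyclic operators determine the circle action; the paper instead writes down the two candidate $S^1$-actions explicitly on a representative point $((\alpha,w),\beta,x)$ in the coequalizer~\eqref{geometric_fixed_points_definition} and observes that the two resulting points are identified. One minor discrepancy: the paper keeps $\Phi^{C_r}(Y_\bullet)$ as a $\Lambda_r$-object whose realization carries an $\mathbb R/r\mathbb Z$-action with trivial $C_r$-part (hence an $S^1/C_r$-action), rather than passing to a genuine cyclic object as you do; the two descriptions are equivalent, but staying in $\Lambda_r$ avoids the need to normalise by $1/r$ that you flag as the main bookkeeping obstacle.
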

\begin{proof}
We construct a homeomorphism which is $S^1$-equivariant with
respect to the $S^1$-action that restricts to a trivial $C_r$-action. 
As a first step, commuting the realization with coequalizer, smash
products and $C_r$-fixed points provides a homeomorphism 
$|\Phi^{C_r}(Y_{\bullet})| \cong \Phi^{C_r}|(Y_{\bullet})|$ where 
$\Phi^{C_r}$ is understood in the absolute sense. This homeomorphism
can be checked to be $S^1$-equivariant. In the second step, we 
note that the $S^1$-actions on the relative and the absolute
versions of  $\Phi^{C_r}|(Y_{\bullet})|$ coincide: The action of $g \in S^1$ sends a point represented by \[ ((\alpha,w),\beta, x) \in  \mathcal L_S(V^{C_r},\mathbb R^k) \sm \mathcal L(\mathbb R^{\mathrm{dim}\,V},V)_+ \sm |(Y_{\bullet})|(\mathbb R^{\mathrm{dim}\,V}) \] to  $((\alpha,w),\beta, gx)$ in the first case and to $((\alpha\circ g^{-1},w),g\circ \beta, gx)$ in the second case, and these points get identified in 
the coequalizer. 
\end{proof}

\section{Cyclotomic structures}\label{sec:cyclotomic-str}
In this section we show that the chain of stable equivalences in Theorem~\ref{thm:THH-model-comparison} is in fact a chain of equivalences of cyclotomic spectra, in the sense to be reviewed below. 
\subsection{Construction of cyclotomic structures}
We recall the following notion of cyclotomic spectra from~\cite{BM16}.
As usual $\rho_r\colon S^{1}\to S^{1}/C_r$ is the group isomorphism induced by the $r$-fold root, and $\rho_{r}^\ast$ denotes restriction along $\rho_r$.
\begin{definition}\phantomsection\label{def:cyclotomic-spectra}\begin{enumerate}[(i)]
\item A \textit{pre-cyclotomic spectrum} is an orthogonal $S^1$-spectrum $T$ with choices of maps of $S^1$-spectra
\[ c_r\colon \rho_r^*\Phi^{C_r}T \to T \] for every integer $r \geq 1$ such that following square commutes for any $r$ and $s$:
\begin{equation}\label{eq:cyclotomic-compatibility} \xymatrix{
\rho_{rs}^* \Phi^{C_{rs}} T \ar[rr]^-{c_{rs}} \ar[d]^-{it} && T \\
\rho_r^* \Phi^{C_r} \rho_s^* \Phi^{C_s} T \ar[rr]^-{\rho_r^*\Phi^{C_r} c_s} && \rho_r^* \Phi^{C_r} T \ar[u]_-{c_r} \rlap{\ .}} \end{equation}
Here $it$ is the map $c_{r,s}$ from \cite[Proposition 2.6]{BM16}; its definition is also forced from \cite[Theorem 3.23]{Mal17}.
\item A \textit{cyclotomic spectrum} is a pre-cyclotomic spectrum $T$ such that the induced map out of the derived geometric fixed points
\[ \rho_r^*\mathbb{L}\Phi^{C_r}T \to \rho_r^*\Phi^{C_r}T \stackrel{c_r}{\longrightarrow} T \]
is a stable equivalence of underlying orthogonal spectra for all $r\geq 1$. 
\end{enumerate}
\end{definition}

\begin{remark}
The definition of a cyclotomic spectrum \cite[Definition 4.10]{BM16} requires the map $\rho_r^*\mathbb{L}\Phi^{C_r}T \to \rho_r^*\Phi^{C_r}T \to T$ to be an $\mathcal{F}$-equivalence, that is, a map that induces an equivalence on derived $C_s$-geometric fixed-points for every integer $s\geq 1$. It is however sufficient to require these maps to be underlying equivalences. Indeed by condition (i) the diagram
\[
\xymatrix{
\rho_s^*\mathbb{L}\Phi^{C_s}\rho_r^*\mathbb{L}\Phi^{C_r}T
\ar[rr]\ar[d]_{\simeq}
&& \rho_s^*\mathbb{L}\Phi^{C_s}\rho_r^*\Phi^{C_r}T\ar[rr]^-{\rho_s^*\mathbb{L}\Phi^{C_s}c_r}\ar[d]
&&
\rho_s^*\mathbb{L}\Phi^{C_s}T\ar[d]
\\
\rho_s^*\Phi^{C_s}\rho_r^*\mathbb{L}\Phi^{C_r}T
\ar[rr]
&& \rho_s^*\Phi^{C_s}\rho_r^*\Phi^{C_r}T\ar[rr]^-{\rho_s^*\Phi^{C_s}c_r}
&&
\rho_s^*\Phi^{C_s}T\ar[d]^{c_s}
\\
\rho_{sr}^*\mathbb{L}\Phi^{C_{sr}}T\ar[u]^-{\simeq}_{it}
\ar[rr]
&&\rho_{sr}^*\Phi^{C_{sr}}T\ar[rr]^-{c_{sr}}\ar[u]_{it}
&&
T
}
\]
commutes, where the upper left vertical map is an equivalence since $\rho_r^*\mathbb{L}\Phi^{C_r}T$ is a cofibrant $S^1$-spectrum, and the lower left vertical map is an equivalence by \cite[Proposition 2.6]{BM16}. By condition (ii) the lower horizontal and the right vertical composites are equivalences, and therefore so is the composite of the top row.

We also recall from~\cite[Proposition 5.5]{BM16} that if $T$ and $T'$ are cyclotomic spectra, any stable equivalence $T \to T'$ of underlying non-equivariant spectra commuting with the maps $c_r$ is automatically an $\mathcal F$-equivalence.
\end{remark}


Let $E$ be a symmetric spectrum object in $\SpO{}$ such that $E^n$ is a flat orthogonal spectrum for all $n \geq 0$. As usual, we write $\mathrm{sd}_r$ for the $r$-fold edgewise subdivision. Now consider the composite 
\begin{equation} 
\label{eq:restriction-on-thhEq}
\begin{split}
\gamma_r\colon & \Phi^{C_r} (\mathrm{sd}_r \mathrm{thh}_{\bullet}(E))_q 
\xrightarrow{=} \Phi^{C_r} \hocolimsubscr_{\cI^{\times (q+1)r}} \Omega^{\cI}_{(q+1)r}(E)
\\
& \xrightarrow{\cong}\; 
\hocolimsubscr_{\cI^{\times (q+1)}} \Phi^{C_r} (\Omega^{\cI}_{(q+1)r}(E) \circ \Delta_r) \\  & \xrightarrow{=}\; 
  \hocolimsubscr_{(\bld{n_0},\dots, \bld{n_q}) \in \cI^{\times (q+1)}} \Phi^{C_r} \Omega^{r(n_0+ \dots + n_q)}\left((E^{n_0}\sm \dots \sm E^{n_q})^{\sm r}\right)\\ & \xrightarrow{\lambda}\; 
\hocolimsubscr_{(\bld{n_0},\dots, \bld{n_q}) \in \cI^{\times (q+1)}} \Omega^{n_0+ \dots + n_q}\left(\Phi^{C_r}(E^{n_0}\sm \dots \sm E^{n_q})^{\sm r}\right)\\ & \xrightarrow{\cong}\;  
\hocolimsubscr_{(\bld{n_0},\dots, \bld{n_q}) \in \cI^{\times (q+1)}} \Omega^{n_0+ \dots + n_q}(E^{n_0}\sm \dots \sm E^{n_q}).
\end{split}
\end{equation}
Here $\Delta_r$ stands for the diagonal $\cI^{\times (q+1)} \to \cI^{\times (q+1)r}$, which is the inclusion of the fixed-points category of $\cI^{\times (q+1)r}$ under the $C_r$-action.  The first isomorphism is the canonical interchange discussed in Lemma~\ref{lem:hocolim-PhiG-interchange}. The map $\lambda$ is an instance of the interchange map~\eqref{eq:Phi-G-vs-loops}. 
It is a straightforward but tedious diagram-chase to verify that $\lambda$ defines a map of diagrams over $\cI^{\times (q+1)}$. Finally the last map above is induced by the inverse of the diagonal isomorphism  from Proposition~\ref{prop:diagonal-iso} for the flat orthogonal spectrum $X = E^{n_0}\sm \dots \sm E^{n_q}$. Note that $\Phi^{C_r}$ is taken in the ``absolute'' sense when defining these maps.

For the statement of the following result we use the fact that $\Phi^{C_r} \sd_r(-)$ preserves cyclic objects in orthogonal spectra, see e.g. \cite[Theorem 4.6]{ABG15} or \cite[Proposition 4.1]{Mal17}.

\begin{proposition} \label{cyclic map} Let $E$ be a ring bispectrum. The map $\gamma_r$ from (\ref{eq:restriction-on-thhEq}) is a morphism of cyclic objects.
\end{proposition}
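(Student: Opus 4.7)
The plan is to verify that $\gamma_r$ intertwines face maps, degeneracies, and cyclic rotations separately, by exploiting naturality of each of the five constituent maps in \eqref{eq:restriction-on-thhEq}. By Corollary \ref{cyclic_structure} together with Proposition \ref{bokstedt_functoriality}, every cyclic operator on $\mathrm{thh}_\bullet(E)$ is a $\cB(f,\{\varphi_j\})$-map for a suitable ordered set map $f$ and component maps $\varphi_j$ built from the unit and multiplication of $E$. The cyclic operators on $\sd_r \mathrm{thh}_\bullet(E)$ at simplicial level $q$ are, by the standard subdivision recipe, the composites of $r$ such $\cB(f,\{\varphi_j\})$-maps in simplicial level $r(q{+}1){-}1$, arranged so that $f$ restricts to the original $f$ on the fixed-points subcategory $\cI^{\times(q+1)} = (\cI^{\times(q+1)r})^{C_r}$, and the $\varphi_j$ are $r$-fold external smash powers of the original component maps. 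Applying $\Phi^{C_r}$ levelwise then gives the cyclic structure on $\Phi^{C_r}\sd_r \mathrm{thh}_\bullet(E)$.

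The key observation is that each of the five maps in~\eqref{eq:restriction-on-thhEq} is natural with respect to $\cB(f,\{\varphi_j\})$-maps. The identity in the first line is tautological. The interchange isomorphism of Lemma \ref{lem:hocolim-PhiG-interchange} is natural in the $C_r$-diagram and in the $C_r$-equivariant functor between indexing categories, so it transports a $\cB(f,\{\varphi_j\})$-map on the subdivided side to its $C_r$-fixed-point restriction on the target side. The interchange $\lambda$ of \eqref{eq:Phi-G-vs-loops} is natural in the $C_r$-spectrum and commutes with the lax monoidal structure $\Phi^{C_r}(X)\sma \Phi^{C_r}(Y)\to \Phi^{C_r}(X\sma Y)$ of Proposition \ref{prop:properties-geom-fixed}(ii), uniquely so by \cite[Theorem 3.20 and Remark 3.21]{Mal17}. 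Finally, the diagonal isomorphism from Proposition \ref{prop:diagonal-iso} is both natural in the flat orthogonal spectrum and compatible with the lax monoidal structure of $\Phi^{C_r}$; concretely, if $\mu\colon E^n\sma E^m \to E^{n+m}$ is the ring multiplication, then the diagram
\[\xymatrix@C+1pc{ E^n\sma E^m \ar[r]^-{\cong}\ar[d]_{\mu} & \Phi^{C_r}((E^n)^{\sma r}\sma (E^m)^{\sma r}) \ar[d]^{\Phi^{C_r}(\mu^{\sma r})} \\ E^{n+m} \ar[r]^-{\cong} & \Phi^{C_r}((E^{n+m})^{\sma r})} \]
commutes, because the diagonal is characterised by naturality and its value on the sphere. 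The same remark applied to the unit handles degeneracies.

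Combining these ingredients gives the face and degeneracy compatibilities immediately: both sides of the relevant square are $\hocolim_{\cI^{\times(q+1)}}$ (respectively $\hocolim_{\cI^{\times q}}$) of the same natural transformation of $\cI$-diagrams built from $\mu$ or the unit, produced either before or after threading through $\Phi^{C_r}$, and all the intermediate naturalities line up.

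The main obstacle, and the only step that is not purely formal, is the compatibility with the cyclic operator $\tau_q$. One has to track how the cyclic permutation $\tau_{r(q+1)-1}^{?}$ of the $r(q{+}1)$ tensor factors of $\mathrm{thh}_{r(q+1)-1}(E)$ reduces, upon passing to $\Phi^{C_r}$ and restricting along the diagonal $\Delta_r\colon \cI^{\times(q+1)}\hookrightarrow \cI^{\times(q+1)r}$, to the cyclic permutation of $q{+}1$ factors in $\mathrm{thh}_q(E)$. Here one uses that the diagonal isomorphism $X\xrightarrow{\cong}\Phi^{C_r}(X^{\sma r})$ is $\Sigma_r$-equivariant with respect to the permutation action on $X^{\sma r}$ (again by uniqueness \cite[Remark 3.21]{Mal17}), so that the $C_r$-part of the cyclic permutation is absorbed by the diagonal, leaving precisely the induced rotation on $\cI^{\times(q+1)}$ and on $(E^{n_0}\sma\dots\sma E^{n_q})$. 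Checking this once on generators, together with the naturality argument above, completes the verification that $\gamma_r$ is cyclic.
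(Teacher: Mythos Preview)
Your plan follows essentially the same route as the paper: break $\gamma_r$ into its constituent isomorphisms, argue that the hocolim/$\Phi^{C_r}$ interchange and the map $\lambda$ are natural in the structure maps, declare faces and degeneracies straightforward, and isolate the cyclic rotation $t_q$ as the non-formal step. The paper does exactly this; its proof even organizes the face compatibility via the same naturality square for the change-of-diagram map you describe.

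There is, however, a genuine gap in your treatment of the cyclic operator. You invoke $\Sigma_r$-equivariance of the diagonal $X \xrightarrow{\cong} \Phi^{C_r}(X^{\sma r})$ for $X = E^{n_0}\sma\cdots\sma E^{n_q}$ to ``absorb the $C_r$-part of the cyclic permutation''. But the subdivided rotation $t^{[r]}_q$ restricted to the diagonal is \emph{not} $(t_q)^{\sma r}$ composed with a block permutation in $\Sigma_r$ acting on $X^{\sma r}$. The paper computes that $t^{[r]}_q\circ\Delta_r$ is $(t_q)^{\sma r}$ followed by a cyclic permutation of only the $r$ copies of $E^{n_q}$ (and the corresponding permutation of the sphere coordinates). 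This extra permutation is not a map of the form $\sigma\cdot(-)$ on $X^{\sma r}$ for $\sigma\in\Sigma_r$; rather, using monoidality to write $(E^{n_q}\sma Z)^{\sma r} \cong (E^{n_q})^{\sma r}\sma Z^{\sma r}$, it is the generator of the $C_r$-action on the factor $(E^{n_q})^{\sma r}$ alone. One then uses that $\Phi^{C_r}$ sends the $C_r$-action on any $Y^{\sma r}$ to the identity (this is \cite[Proposition 3.26]{Mal17} or \cite[Lemma 4.5]{ABG15}), together with monoidality of the diagonal, to conclude. Your $\Sigma_r$-equivariance statement is the right flavor of fact, but applied to the wrong $X$; without the explicit decomposition of $t^{[r]}_q$ into $(t_q)^{\sma r}$ and this particular residual permutation, the argument does not close.

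A smaller imprecision: the cyclic rotation $t^{[r]}_q$ on $\sd_r\mathrm{thh}_\bullet(E)$ is the single operator $t_{r(q+1)-1}$, not a composite of $r$ $\cB$-maps as you state (your description is correct for $d_i^{[r]}$ and $s_i^{[r]}$, which are $r$-fold composites, but not for the rotation).
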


\begin{proof}

We claim that the naturality of the isomorphism in Lemma~\ref{lem:hocolim-PhiG-interchange} defines uniquely a cyclic structure on 
\[\hocolimsubscr_{\cI^{\times (q+1)}} \Phi^{C_r} (\Omega^{\cI}_{(q+1)r}(E) \circ \Delta_r)\] 
so that the first isomorphism in (\ref{eq:restriction-on-thhEq}) is cyclic. For example, in the case of the face operators, we have the following commutative diagram
\[\xymatrix@C=10pt{\hocolimsubscr_{\cI^{\times (q+1)}} \Phi^{C_r} (\Omega^{\cI}_{(q+1)r}(E) \circ \Delta_r) \ar[r]^-{\cong} \ar[d]^{\hocolimsubscr_{\cI^{\times (q+1)}} \Phi^{C_r}(d_i^{[r]} \Delta_r)} & \Phi^{C_r} \hocolimsubscr_{\cI^{\times (q+1)r}} \Omega^{\cI}_{(q+1)r}(E)  \ar[d]^{\Phi^{C_r} \hocolimsubscr_{\cI^{\times (q+1)r}} (d_i^{[r]})} \\ \hocolimsubscr_{\cI^{\times (q+1)}} \Phi^{C_r} (\Omega^{\cI}_{qr}(E) \circ d_i^{[r]} \circ \Delta_r )  \ar[r]_-\cong \ar[d]^{can} & \Phi^{C_r} \hocolimsubscr_{\cI^{\times (q+1)r}} (\Omega^{\cI}_{qr}(E) \circ d_i^{[r]}) \ar[d]^{\Phi^{C_r}(can)}   \\ \hocolimsubscr_{\cI^{\times q}} \Phi^{C_r} (\Omega^{\cI}_{qr}(E) \circ \Delta_r) \ar[r]^\cong & \Phi^{C_r} \hocolimsubscr_{\cI^{\times qr}} \Omega^{\cI}_{qr}(E)\rlap{\ .}}\]
Here $d_i^{[r]}$ stands for the $i$-th face operator of the $r$-fold subdivision. In fact, we abuse here notation and use $d_i^{[r]}$ for the natural $C_r$-transformation  of $C_r$-diagrams
\[d_i^{[r]}\colon \Omega^{\cI}_{(q+1)r}(E) \to \Omega^{\cI}_{qr}(E) \circ d_i^{[r]}  \]
and for the $C_r$-equivariant functor $d_i^{[r]}\colon \cI^{\times (q+1)r} \to \cI^{\times qr}$. The symbol $can$ denotes the change of diagrams transformation.
We also note that $d_i^{[r]} \circ \Delta_r= \Delta_r \circ d_i$. The vertical composites are the $i$-th face operators. All the other cyclic structure maps are handled similarly, including the cyclic operators.

The next step is to check that the map 
\[\hocolimsubscr_{\cI^{\times (q+1)}} \Phi^{C_r} (\Omega^{\cI}_{(q+1)r}(E) \circ \Delta_r) \to  \hocolimsubscr_{(\bld{n_0},\dots, \bld{n_q}) \in \cI^{\times (q+1)}} \Omega^{n_0+ \dots + n_q}(E^{n_0}\sm \dots \sm E^{n_q})\]
is cyclic. Compatibility with the degeneracy maps and all the face maps  except the last one is straightforward. We will now show that the latter map respects the cyclic operators. This will imply the desired result.

We need to show that the diagram
\[\xymatrix{\hocolimsubscr_{\cI^{\times (q+1)}} \Phi^{C_r} (\Omega^{\cI}_{(q+1)r}(E) \circ \Delta_r) \ar[r] \ar[d]^{\hocolimsubscr_{\cI^{\times (q+1)}} \Phi^{C_r}(t^{[r]}_q \Delta_r)} & \hocolimsubscr_{\cI^{\times (q+1)}} \Omega^{\cI}_{q+1}(E)  \ar[d]^{\hocolimsubscr_{\cI^{\times (q+1)}} (t_q)} \\ \hocolimsubscr_{\cI^{\times (q+1)}} \Phi^{C_r} (\Omega^{\cI}_{(q+1)r}(E) \circ t^{[r]}_q \circ \Delta_r )  \ar[r] \ar[d]^{can} & \hocolimsubscr_{\cI^{\times (q+1)}} (\Omega^{\cI}_{q+1}(E) \circ t_q) \ar[d]^{can}   \\ \hocolimsubscr_{\cI^{\times (q+1)}} \Phi^{C_r} (\Omega^{\cI}_{(q+1)r}(E) \circ \Delta_r) \ar[r] & \hocolimsubscr_{\cI^{\times (q+1)}} \Omega^{\cI}_{q+1}(E). }\]
\noindent commutes, where the vertical maps arise from the cyclic structure and the horizontal maps arise from \eqref{eq:restriction-on-thhEq}. Here $t^{[r]}_q$ denotes  the $\Lambda_r$-cyclic operator for the $r$-fold subdivision in degree $q$.  More precisely, we have a functor $t^{[r]}_q \colon {\cI}_{(q+1)r} \to {\cI}_{(q+1)r}$ and a natural transformation
\[t^{[r]}_q \colon \Omega^{\cI}_{(q+1)r}(E) \to \Omega^{\cI}_{(q+1)r}(E) \circ t^{[r]}_q. \]
Also note that  $t^{[r]}_q \circ \Delta_r= \Delta_r \circ t_q$. 

The commutativity of the lower square follows  from the naturality of the canonical change of diagrams map $can$. The commutativity of the upper square requires an argument. In fact the diagram commutes before passing to homotopy colimits, in other words, the diagram\[\xymatrix{ \Phi^{C_r} \Omega^{r(n_0+ \dots + n_q)}(E^{n_0}\sm \dots \sm E^{n_q})^{\sm r} \ar[r] \ar[d]^{\Phi^{C_r}(t^{[r]}_q \Delta_r)} & \Omega^{n_0+ \dots + n_q}(E^{n_0}\sm \dots \sm E^{n_q}) \ar[d]^{t_q} \\ \Phi^{C_r} \Omega^{r(n_q+ \dots + n_{q-1})} (E^{n_q}\sm \dots \sm E^{n_{q-1}})^{\sm r} \ar[r] &  \Omega^{n_q+ \dots + n_{q-1}}(E^{n_q}\sm \dots \sm E^{n_{q-1}}) }\]
is commutative. Recall that the horizontal maps are induced from the natural transformation $\Phi^G(\Omega^V X) \to \Omega^{V^G}\Phi^G X$ and the diagonal isomorphism $X \to \Phi^{C_r}(X^{\sm r})$. When analyzing the cyclic permutations on the left and right, we see that the one on left is the composition of the $r$-fold smash power of the one on the right and a cyclic permutation of the $n_q$ coordinates. The $C_r$-geometric fixed points of such a cyclic permutation of smash factors is the identity by \cite[Proposition 3.26]{Mal17} (see also \cite[Lemma 4.5]{ABG15}). Moreover $C_r$-fixed points of the cyclic permutation of the $r$-fold sum $\mathbb{R}^{rn} =\mathbb{R}^{n} \oplus \cdots \oplus \mathbb{R}^{n}$ is also the identity. This completes the proof by the naturality of $\Phi^G(\Omega^V X) \to \Omega^{V^G}\Phi^G X$ and the diagonal isomorphism. \end{proof}

As a consequence of the latter proposition we get an $S^1$-equivariant map
\begin{equation}\label{eq:cyc-str-part-one}|\gamma_r|\colon|\Phi^{C_r} (\mathrm{sd}_r \mathrm{thh}_{\bullet}(E))| \rightarrow |\mathrm{thh}_{\bullet}(E))|=\mathrm{thh}(E).\end{equation}

\begin{proposition}\label{prop:geomandsub}
There is an isomorphism of $S^1$-spectra
\begin{equation}\label{eq:cyc-str-part-two}|\Phi^{C_r} (\mathrm{sd}_r \mathrm{thh}_{\bullet}(E))| \xrightarrow{\iso} \rho_r^*\Phi^{C_r} | \mathrm{thh}_{\bullet}(E)|. \end{equation}
\end{proposition}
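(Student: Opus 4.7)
The plan is to assemble the statement from two main ingredients: Lemma~\ref{lem:compatible-Cr-realizations}, which commutes realization with $\Phi^{C_r}$ for $\Lambda_r$-objects, together with the classical identification of the realization of the $r$-fold edgewise subdivision of a cyclic object with the realization of the original cyclic object.

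First I would observe that $\mathrm{sd}_r \mathrm{thh}_\bullet(E)$ is naturally a $\Lambda_r$-object in orthogonal $C_r$-spectra. Its simplicial degree $q$ term is $\mathrm{thh}_{r(q+1)-1}(E)$, which carries a $C_{r(q+1)}$-action from the cyclic structure on $\mathrm{thh}_\bullet(E)$; the subgroup $C_r$ generated by the $(q+1)$st power of the cyclic operator gives an action commuting with the $\Lambda_r$-cyclic structure maps. Lemma~\ref{lem:compatible-Cr-realizations} then produces a natural isomorphism
\[|\Phi^{C_r}(\mathrm{sd}_r \mathrm{thh}_\bullet(E))| \xrightarrow{\iso} \Phi^{C_r}|\mathrm{sd}_r \mathrm{thh}_\bullet(E)|\]
which is $S^1$-equivariant with the $C_r \subset S^1$ subgroup acting trivially on both sides.

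Next I would invoke the standard subdivision identification (see \cite[Lemma 1.11]{BHM93}): for any cyclic orthogonal spectrum $X_\bullet$ there is a canonical natural homeomorphism $|\mathrm{sd}_r X_\bullet| \cong \rho_r^*|X_\bullet|$ of $S^1$-spectra, identifying the $\Lambda_r$-derived action on the left-hand side with the pullback along $\rho_r$ of the cyclic action on the right. Applying this to $X_\bullet = \mathrm{thh}_\bullet(E)$ and then $\Phi^{C_r}$, while noting that $\Phi^{C_r}$ depends only on the underlying $C_r$-action and therefore commutes with the group reparametrization $\rho_r^*$, yields a natural isomorphism
\[\Phi^{C_r}|\mathrm{sd}_r \mathrm{thh}_\bullet(E)| \xrightarrow{\iso} \rho_r^*\Phi^{C_r}|\mathrm{thh}_\bullet(E)|\]
of $S^1$-spectra. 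Composing with the first isomorphism gives the proposition.

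The main technical difficulty is the bookkeeping of the various $S^1$-actions: on the left of the proposition the action comes from the $\Lambda_r$-structure on $\mathrm{sd}_r \mathrm{thh}_\bullet(E)$, while on the right it arises by pulling an $S^1/C_r$-action back along $\rho_r$. Verifying that the classical subdivision homeomorphism is $S^1$-equivariant for precisely the twist $\rho_r$ appearing in the statement is the substantive input; once that is in place, the rest of the argument is essentially formal.
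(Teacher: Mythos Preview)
Your proposal is correct and follows essentially the same route as the paper's proof. The paper is in fact even more terse: it simply cites \cite[Proposition 4.1]{Mal17} and \cite[Theorem 4.6]{ABG15}, noting that the key middle isomorphism $|P_r \Phi^{C_r} \mathrm{sd}_r X_{\bullet}| \iso \Phi^{C_r}|\mathrm{sd}_r X_{\bullet}|$ in that cited argument is precisely Lemma~\ref{lem:compatible-Cr-realizations}; your two-step decomposition (Lemma~\ref{lem:compatible-Cr-realizations} followed by the classical subdivision homeomorphism) is exactly the content of that reference.
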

\begin{proof}
This is~\cite[Proposition 4.1]{Mal17}. The middle isomorphism
$|P_r \Phi^{C_r} \mathrm{sd}_r X_{\bullet}| \iso \Phi^{C_r}|\mathrm{sd}_r X_{\bullet}|$
appearing in that argument is a consequence of Lemma~\ref{lem:compatible-Cr-realizations}. 
The isomorphism in the proposition also appears in \cite[Theorem 4.6]{ABG15}.
\end{proof}
Combining the two morphisms~\eqref{eq:cyc-str-part-one} and \eqref{eq:cyc-str-part-two} we get $S^1$-equivariant maps
\[c_r\colon\rho_r^*\Phi^{C_r}\mathrm{thh}(E) \rightarrow \mathrm{thh}(E)\] for all $r \geq 1$.  

\begin{proposition} \label{compatibility}
The maps $c_r$ define a pre-cyclotomic structure on $\mathrm{thh}(E)$. 
\end{proposition}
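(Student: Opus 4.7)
The plan is to verify the pre-cyclotomic compatibility square from Definition~\ref{def:cyclotomic-spectra}(i) for the maps $c_r$ just constructed. Since the construction of $c_r$ factors as $|\gamma_r|$ composed with the natural isomorphism from Proposition~\ref{prop:geomandsub}, and both the realization functor and that isomorphism are natural, it will suffice to check the analogous compatibility at the simplicial level before geometric realization. Concretely, I want to show that after identifying $\mathrm{sd}_{rs} = \mathrm{sd}_r\, \mathrm{sd}_s$ and $\Phi^{C_{rs}} = \Phi^{C_r}\Phi^{C_s}$ (using the consistent choice of complete universe recalled in Section~\ref{sec:orthogonal-G} so that $\Phi^{C_{rs}}$ factors strictly as iterated geometric fixed points), the map $\gamma_{rs}$ from~\eqref{eq:restriction-on-thhEq} coincides with the composite $\gamma_r \circ \Phi^{C_r}(\gamma_s)$ in each simplicial degree $q$.

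The plan is to verify this compatibility one factor at a time, working through the five arrows that assemble into $\gamma_r$. For the hocolim-$\Phi^G$ interchange of Lemma~\ref{lem:hocolim-PhiG-interchange}, the necessary compatibility is bookkeeping about the fixed-points inclusions $\cI^{\times (q+1)} \xrightarrow{\Delta_s} \cI^{\times (q+1)s} \xrightarrow{\Delta_r} \cI^{\times (q+1)rs}$ and follows from naturality of the Bousfield--Kan model, using $\Delta_{rs} = \Delta_r \circ \Delta_s$. For the interchange map $\lambda\colon \Phi^G(\Omega^V X) \to \Omega^{V^G}\Phi^G X$, iterating over $C_r$ and $C_s$ and using $(V^{C_s})^{C_r} = V^{C_{rs}}$ yields the $\lambda$ for $C_{rs}$; this is a consequence of the uniqueness assertion of \cite[Theorem 3.23]{Mal17} (the same principle that forces the definition of the map $it$ in Definition~\ref{def:cyclotomic-spectra}(i)).

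The step where I expect the real content of the proof to lie is the compatibility of the diagonal isomorphism from Proposition~\ref{prop:diagonal-iso}. One must show that for a flat orthogonal spectrum $X$, the diagonal $X \xrightarrow{\cong} \Phi^{C_{rs}}(X^{\sm rs})$ factors as
\[ X \xrightarrow{\cong} \Phi^{C_s}(X^{\sm s}) \xrightarrow{\cong} \Phi^{C_r}\Phi^{C_s}\bigl((X^{\sm s})^{\sm r}\bigr) \xrightarrow{\cong} \Phi^{C_{rs}}(X^{\sm rs}), \]
where the last arrow re-indexes the iterated smash power using the canonical isomorphism $\Phi^{C_r}\Phi^{C_s} \cong \Phi^{C_{rs}}$. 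This multiplicativity of the diagonal is established in \cite[Proposition B.209]{HHR} in the form of a transitivity statement for the diagonal isomorphisms, and corresponding statements are proved in \cite[Theorem 3.2.16]{BDS16}, \cite[Theorem 3.4.26]{Stolz_equivariant}, and \cite[Proposition 3.25]{Mal17}; flatness of each orthogonal spectrum $E^{n_0}\sm\dots\sm E^{n_q}$ (granted by Lemma~\ref{lem:flat-preservation}(i) together with the hypothesis that all $E^n$ are flat, which follows from $E$ being a ring bispectrum in our applications to $\sh A$ and $\Sigma^\infty A$ via Theorem~\ref{thm:shifts-of-flat} and Lemma~\ref{lem:unit-is-flat-cof}) guarantees that all the diagonal maps involved really are isomorphisms, so that we can invert them in the definition of $\gamma_r$.

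Finally, I combine these three compatibilities into a pasting diagram whose outer boundary is precisely the simplicial version of the compatibility square~\eqref{eq:cyclotomic-compatibility}. Applying geometric realization and the $S^1$-equivariant identifications from Proposition~\ref{prop:geomandsub} (iterated for $C_r, C_s, C_{rs}$, and noting that $\rho_{rs} = \rho_r \circ (\rho_s/C_r)$ under the identification $S^1/C_{rs} = (S^1/C_s)/C_r$) then yields the pre-cyclotomic compatibility for the $c_r$ on $\mathrm{thh}(E)$, completing the proof.
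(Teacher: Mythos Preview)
Your approach is essentially the same as the paper's: reduce the pre-cyclotomic compatibility square to a simplicial statement and then verify it factor-by-factor for the constituents of $\gamma_r$ (the hocolim--$\Phi$ interchange, the map $\lambda$, and the diagonal isomorphism). The paper writes this out as one large pasting diagram and cites \cite[Proposition~3.29]{Mal17} for the diagonal compatibility and \cite[Theorem~1.2]{Mal17} as a labor-saving device for the remaining cells; your references to \cite[B.209]{HHR}, \cite{BDS16}, \cite{Stolz_equivariant} and \cite[Proposition~3.25]{Mal17} serve the same purpose.

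There is, however, one genuine gap in your reduction step. You write that since $c_r$ factors as the isomorphism of Proposition~\ref{prop:geomandsub} followed by $|\gamma_r|$, and both are natural, it suffices to check compatibility before realization. Naturality alone does not give this. The pre-cyclotomic square~\eqref{eq:cyclotomic-compatibility} involves the map $it\colon \rho_{rs}^*\Phi^{C_{rs}}T \to \rho_r^*\Phi^{C_r}\rho_s^*\Phi^{C_s}T$, and you must verify that under the isomorphisms of Proposition~\ref{prop:geomandsub} (applied once for $C_{rs}$ and twice for $C_r$ then $C_s$), this $it$ on the realization corresponds to the simplicial $it$ map $|\Phi^{C_{rs}}\mathrm{sd}_{rs}X_\bullet| \to |\Phi^{C_r}\Phi^{C_s}\mathrm{sd}_{rs}X_\bullet|$. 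This is precisely the right-hand rectangle in the paper's first displayed diagram, and it is not a formality: the paper disposes of it by invoking \cite[Theorem~4.6]{Mal17}. Your final paragraph gestures at ``iterating Proposition~\ref{prop:geomandsub} for $C_r,C_s,C_{rs}$'' but does not supply this compatibility, and your earlier remark that $\Phi^{C_{rs}}=\Phi^{C_r}\Phi^{C_s}$ ``strictly'' is too strong---the identification is via the map $it$, and its interaction with realization and subdivision is exactly what needs checking. Once you insert that citation (or reprove the relevant commuting squares), the rest of your argument goes through as written.
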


\begin{remark} When $E = \Sigma^\infty R$ for an orthogonal ring spectrum $R$ this proposition is \cite[Theorem 4.9]{BM12}, since the maps constructed above coincide with the maps constructed in \cite[Section 4]{BM12}. To construct the pre-cyclotomic structure maps in \cite{BM12} the authors use the original approach from \cite{HM97}, which specifies the morphisms levelwise and then Kan extends to pass to geometric fixed points. Our approach works entirely in the category of orthogonal spectra without specifying the levels, and we construct the pre-cyclotomic structure maps as the realization of cyclic maps (see Remark \ref{notcyclic} below). In our comparison theorem we will need Proposition \ref{compatibility} for $E = \sh{}R$.
\end{remark}

\begin{proof}[Proof of Proposition \ref{compatibility}]
Let us write $X_\bullet:=\mathrm{thh}_\bullet(E)$.
We have to show that diagram~\eqref{eq:cyclotomic-compatibility} commutes. This is the diagram 
\[
\xymatrix @C=6em{
|X_\bullet| & |\Phi^{C_{rs}} \sd_{rs} X_\bullet| \ar[l]_-{|\gamma_{rs}|} \ar[d]^-{|it|} \ar[r]_-\cong & \rho_{rs}^{\ast}\Phi^{C_{rs}} |X_\bullet| \ar[d]^-{it} \\
|\Phi^{C_r} \sd_r X_\bullet| \ar[u]_-{|\gamma_{r}|} \ar[d]^-{}_-\cong & |\Phi^{C_r} \Phi^{C_s} \sd_{rs} X_\bullet| \ar[l]_-{|\Phi^{C_r} \sd_r \gamma_s|} \ar[d]^-{}_-\cong \ar[r]^-{}_-\cong & \rho_{r}^{\ast}\Phi^{C_r} \rho_{s}^{\ast}\Phi^{C_s} |X_\bullet| \ar@{=}[d] \\
\rho_{r}^{\ast}\Phi^{C_r} |X_\bullet| & \rho_{r}^{\ast}\Phi^{C_r} |\Phi^{C_s} \sd_s X_\bullet| \ar[l]_-{\rho_{r}^{\ast}\Phi^{C_r} |\gamma_{s}|} \ar[r]^-{}_-\cong & \rho_{r}^{\ast}\Phi^{C_r} \rho_{s}^{\ast}\Phi^{C_s} |X_\bullet| \rlap{\ ,}}
\]
where the unlabeled isomorphisms are from Proposition \ref{prop:geomandsub} (we are also using that $\Phi^{C_r} \Phi^{C_s} \sd_{rs}=\Phi^{C_r} \sd_{r}\Phi^{C_s} \sd_{s}$).
The proof from \cite[Theorem 4.6]{Mal17} takes care of all but the top-left region. But all of the maps in that region are realizations of simplicial maps, so it suffices to show that the simplicial maps themselves commute:
\[
\xymatrix @C=7em{
X_{q} & \Phi^{C_{rs}} X_{rs(q+1)-1} \ar[l]_-{\gamma_{rs}} \ar[d]^-{it} \\
\Phi^{C_r} X_{r(q+1)-1} \ar[u]^-{\gamma_r} & \Phi^{C_r} \Phi^{C_s} X_{rs(q+1)-1} \ar[l]_-{\Phi^{C_r} \sd_r \gamma_s}\rlap{\ .}
}
\]
Remembering that $X_{q}$ is the $(q+1)$-fold B\"okstedt smash product
\[ \hocolimsubscr_{\cI^{\times (q+1)}} \Omega^{\cI}_{(q+1)}(E) = \hocolimsubscr_{(\bld{n_0},\ldots,\bld{n_{q}}) \in \cI^{\times (q+1)}} \Omega^{n_0+\ldots+n_{q}} (E^{n_0} \sma \ldots \sma E^{n_{q}}) \]
for a ring bispectrum $E$, this final square may be decomposed in the following way (where we omit the decorations of $\Omega$ and $E$ to improve readability): 
\[
\xymatrix@C=14pt@R=9pt{ 
  & & \Phi^{C_{rs}} \displaystyle\hocolimsubscr_{\cI^{\times rs(q+1)}} \Omega^{\dotsm}E^{\dotsm}\!\! \ar[d]^-\cong \ar@<10ex>@/^1.5pc/[ddddd]_-{it} \ar@<-2ex>[dll]_-{\gamma_{rs}}\\
  \displaystyle\hocolimsubscr_{\cI^{\times (q+1)}}  \Omega^{\dotsm}E^{\dotsm}\! \ar[r]^-{\delta_{rs}}_-\cong \ar[d]_-{\delta_r}^-\cong  &
  \displaystyle\hocolimsubscr_{\cI^{\times (q+1)}} \Omega^{\dotsm} \Phi^{C_{rs}} E^{\dotsm}\! \ar[d]^-{it} &
  \displaystyle\hocolimsubscr_{\cI^{\times (q+1)}} \Phi^{C_{rs}}\Omega^{\dotsm}E^{\dotsm}\!\! \ar[l]_-\lambda \ar[dd]^-{it} \\
  \displaystyle\hocolimsubscr_{\cI^{\times (q+1)}} \Omega^{\dotsm} \Phi^{C_r} E^{\dotsm}\! \ar[r]^-{\delta_s}_-\cong &
  \displaystyle\hocolimsubscr_{\cI^{\times (q+1)}} \Omega^{\dotsm} \Phi^{C_r}\Phi^{C_s} E^{\dotsm} &
   \\
  \displaystyle\hocolimsubscr_{\cI^{\times (q+1)}} \Phi^{C_r} \Omega^{\dotsm}E^{\dotsm}\! \ar[u]^-\lambda \ar[r]^-{\delta_s}_-\cong &
  \displaystyle\hocolimsubscr_{\cI^{\times (q+1)}} \Phi^{C_r} \Omega^{\dotsm} \Phi^{C_s} E^{\dotsm} \ar[u]^-\lambda &
  \displaystyle\hocolimsubscr_{\cI^{\times (q+1)}}  \Phi^{C_r} \Phi^{C_s}\Omega^{\dotsm}E^{\dotsm}\!\! \ar[l]_-\lambda  \\
 \Phi^{C_r}  \displaystyle\hocolimsubscr_{\cI^{\times r(q+1)}} \Omega^{\dotsm}E^{\dotsm}\! \ar[u]^-\cong \ar[r]^-{\delta_s}_-\cong \ar@<8ex>@/^1pc/[uuu]^-{\gamma_r}&
 \Phi^{C_r}  \displaystyle\hocolimsubscr_{\cI^{\times r(q+1)}} \Omega^{\dotsm} \Phi^{C_s} E^{\dotsm} \ar[u]^-\cong &
 \Phi^{C_r}  \displaystyle\hocolimsubscr_{\cI^{\times r(q+1)}} \Phi^{C_s}\Omega^{\dotsm}E^{\dotsm}\!\! \ar[u]^-\cong \ar[l]_-\lambda \\
 && \Phi^{C_r} \Phi^{C_s} \displaystyle\hocolimsubscr_{\cI^{\times rs(q+1)}} {\Omega^{\dotsm}}E^{\dotsm}\!\! \ar[u]^-\cong \ar@<2ex>[ull]^-{\Phi^{C_r} \sd_r \gamma_s}
}
\]
where $\delta_r\colon X \to \Phi^{C_r}(X^{\sm r})$ is the diagonal isomorphism from Proposition \ref{prop:diagonal-iso}.
The upper-left hand square commutes by \cite[Proposition 3.29]{Mal17}. The remaining regions are formal diagram-chases in which \cite[Theorem 1.2]{Mal17} occasionally saves some effort.
\end{proof}

 Our next task is to show that this pre-cyclotomic structure is actually cyclotomic:
\begin{proposition}\label{prop:pre-cyc-structures-are-cyc} Let $E$ be a ring bispectrum such that $E^n$ is a flat orthogonal spectrum for all $n \geq 0$. Then the above defined pre-cyclotomic structure on $\mathrm{thh}(E)$ is cyclotomic.
\end{proposition}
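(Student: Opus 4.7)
The plan is to verify the cyclotomic condition level-by-level on the cyclic object $\mathrm{thh}_\bullet(E)$. By Proposition~\ref{prop:geomandsub}, the map $\rho_r^*\mathbb{L}\Phi^{C_r}\mathrm{thh}(E) \to \mathrm{thh}(E)$ underlying the cyclotomic condition is identified with a derived replacement of the realization $|\gamma_r|$. Since $\mathrm{thh}_\bullet(E)$ is good by Proposition~\ref{goodness}, derivation of $\Phi^{C_r}$ commutes with geometric realization up to stable equivalence, so it suffices to show that at each simplicial level $q$ the map $\gamma_r$ from \eqref{eq:restriction-on-thhEq} becomes a stable equivalence once its source is replaced by the derived geometric fixed points of $(\sd_r \mathrm{thh}_\bullet(E))_q$.

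At each level $q$, I would analyze the steps of \eqref{eq:restriction-on-thhEq} in turn. The first isomorphism, provided by Lemma~\ref{lem:hocolim-PhiG-interchange}, interchanges $\Phi^{C_r}$ with a homotopy colimit computed via Bousfield--Kan, hence is compatible with derivation. The final diagonal step is an honest isomorphism by Proposition~\ref{prop:diagonal-iso} since $E^{n_0} \wedge \dots \wedge E^{n_q}$ is flat by Lemma~\ref{lem:flat-preservation}(i); moreover flatness of this smash product ensures that the underived and derived $\Phi^{C_r}$ of its $r$-fold smash power (with cyclic permutation action) agree, since the permutation smash power of a flat orthogonal spectrum is flat in the appropriate $C_r$-equivariant sense.

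The heart of the argument, and the main obstacle, is the middle interchange map $\lambda\colon \Phi^{C_r}\Omega^V \to \Omega^{V^{C_r}}\Phi^{C_r}$, applied to $(E^{n_0} \wedge \dots \wedge E^{n_q})^{\wedge r}$ with $V = \mathbb{R}^{r(n_0 + \dots + n_q)}$ equipped with the cyclic $C_r$-action. This interchange is not a strict isomorphism, but by Lemma~\ref{lem:PhiG-vs-OmegaV} its derived version is a stable equivalence whenever a cofibrant replacement of the input induces a $\Phi^{C_r}$-equivalence. I would establish this hypothesis by invoking the equivariant flat model structure on orthogonal $C_r$-spectra: the flatness of $E^{n_0} \wedge \dots \wedge E^{n_q}$ implies that its $r$-fold smash power with cyclic action is cofibrant enough in this model structure for $\Phi^{C_r}$ to be homotopical.

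Combining the three steps, $\gamma_r$ induces a derived stable equivalence at each simplicial level $q$. Passing to geometric realization, using the goodness of $\mathrm{thh}_\bullet(E)$ established in Proposition~\ref{goodness} together with the compatibility between realization and derived geometric fixed points, yields the required conclusion that $\rho_r^*\mathbb{L}\Phi^{C_r}\mathrm{thh}(E) \to \mathrm{thh}(E)$ is a stable equivalence. The technical subtlety I expect will require the most care is the equivariant cofibrancy control needed to apply Lemma~\ref{lem:PhiG-vs-OmegaV} simultaneously for all tuples $(\bld{n_0},\ldots,\bld{n_q})$, so that the resulting equivalences assemble into a map of $\cI^{\times(q+1)}$-diagrams before taking the homotopy colimit.
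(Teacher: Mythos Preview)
Your overall strategy matches the paper's: reduce to simplicial levels, then analyze the composite~\eqref{eq:restriction-on-thhEq} using Lemma~\ref{lem:hocolim-PhiG-interchange}, Lemma~\ref{lem:PhiG-vs-OmegaV}, and Proposition~\ref{prop:diagonal-iso}. The identification of the $\lambda$-step as the crux, and the use of equivariant flatness of $(E^{n_0}\sm\dots\sm E^{n_q})^{\sm r}$ to control $\Phi^{C_r}$, are exactly right.

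There is, however, a genuine gap in your reduction step. You write that ``since $\mathrm{thh}_\bullet(E)$ is good by Proposition~\ref{goodness}, derivation of $\Phi^{C_r}$ commutes with geometric realization up to stable equivalence.'' But Proposition~\ref{goodness} only gives \emph{nonequivariant} goodness of $\mathrm{thh}_\bullet(E)$. To model $\mathbb L\Phi^{C_r}|\sd_r\mathrm{thh}_\bullet(E)|$ by $|\Phi^{C_r}P_\bullet|$ for a Reedy cofibrant replacement $P_\bullet\to\sd_r\mathrm{thh}_\bullet(E)$ in simplicial $C_r$-spectra, you must know that $|P_\bullet|\to|\sd_r\mathrm{thh}_\bullet(E)|$ is a \emph{level equivalence of $C_r$-spectra}, which requires that for every $k\mid r$ the fixed-point simplicial object $(\sd_r\mathrm{thh}_\bullet(E))^{C_{r/k}}$ is good. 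This is a strictly stronger statement than Proposition~\ref{goodness}, and it needs the extra input that an $r/k$-fold smash power of a flat cofibration is a $C_{r/k}$-flat cofibration (so that the degeneracies have the $C_{r/k}$-equivariant homotopy extension property on each level). The paper supplies exactly this argument; without it, your passage from the realization to the individual simplicial levels is unjustified. You also need to check separately that $\Phi^{C_r}P_\bullet$ is good, which follows because $\Phi^{C_r}$ takes the Reedy cofibrations on degeneracies to cofibrations of orthogonal spectra.

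The diagram-assembly issue you flag at the end is handled in the paper by taking the cofibrant replacement not object-by-object but in the $C_r$-projective model structure on equivariant $\cI^{\times(q+1)r}$-diagrams \cite{DM16}; this guarantees cofibrant values at the fixed objects $\cI^{\times(q+1)}\cong(\cI^{\times(q+1)r})^{C_r}$ and makes the comparison a map of diagrams by construction.
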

\begin{proof} 
We need to prove that the composite
\[\mathbb{L}\Phi^{C_r} | \mathrm{thh}_{\bullet}(E)| \rightarrow \Phi^{C_r} | \mathrm{thh}_{\bullet}(E)| \stackrel{c_r}{\longrightarrow} |\mathrm{thh}_{\bullet}(E)| \]
is a stable equivalence of orthogonal spectra, where $\mathbb{L}\Phi^{C_r}$ stands for the derived geometric fixed points. 

Let $P_{\bullet} \rightarrow \mathrm{sd}_r \mathrm{thh}_{\bullet}(E)$ denote a projective cofibrant replacement in the Reedy model structure of simplicial $C_r$-spectra, built from the level model structure of $C_r$-spectra. Then the geometric realization $|P_{\bullet}|$ is cofibrant as a $C_r$-spectrum. To argue that $|P_\bullet| \to |\mathrm{sd}_r \mathrm{thh}_{\bullet}(E)|$ is a level equivalence of $C_r$-spectra, it suffices to show for each $k | r$ that both $P_\bullet^{C_{r/k}}$ and $\mathrm{sd}_r \mathrm{thh}_{\bullet}(E)^{C_{r/k}}$ are good, meaning that each degeneracy is a levelwise $h$-cofibration of spectra. For $\mathrm{sd}_r \mathrm{thh}_{\bullet}(E)^{C_{r/k}}$, we use essentially the same argument as \cite[Lemma 6.17(ii)]{LRRV17}, but with the extra ingredient that an $r/k$-fold smash power of a flat cofibration of orthogonal spectra is a $C_{r/k}$-flat cofibration \cite[3.4.23]{Stolz_equivariant}, and therefore has the $C_{r/k}$-equivariant homotopy extension property on each spectrum level. The goodness of $P_\bullet^{C_{r/k}}$ also follows from the fact that $C_r$-cofibrations of spectra have the $C_r$-equivariant homotopy extension property on each spectrum level.

Therefore $|P_{\bullet}| \to | \mathrm{thh}_{\bullet}(E)|$ is a level equivalence of $C_r$-spectra, so $\Phi^{C_r}|P_{\bullet}|$ models the derived geometric fixed points $\mathbb{L}\Phi^{C_r} | \mathrm{thh}_{\bullet}(E)|$. There is a commutative diagram
\[\xymatrix{|\Phi^{C_r}P_{\bullet}| \ar[r] \ar[d]^\cong & |\Phi^{C_r} (\mathrm{sd}_r \mathrm{thh}_{\bullet}(E))| \ar[d]^\cong \\ \Phi^{C_r}|P_{\bullet}| \ar[r] & \Phi^{C_r} |\mathrm{sd}_r \mathrm{thh}_{\bullet}(E)| }  \]
and it therefore suffices to show that the composite   
\[ |\Phi^{C_r}P_{\bullet}| \rightarrow |\Phi^{C_r} (\mathrm{sd}_r \mathrm{thh}_{\bullet}(E))|  \rightarrow  |\mathrm{thh}_{\bullet}(E)|=\mathrm{thh}(E) \]
is a weak equivalence. 

The simplicial object $\mathrm{thh}_{\bullet}(E)$ is good by Proposition~\ref{goodness}. To see that $\Phi^{C_r}P_{\bullet}$ is a good simplicial object we observe that the Reedy cofibrant object $P_{\bullet}$ on degeneracy maps provides $C_r$-cofibrations, which $\Phi^{C_r}$ takes to cofibrations of orthogonal spectra, which in particular are levelwise $h$-cofibrations. Since both simplicial objects are good, it is sufficient to show that for any $q \geq 0$, the map
\[\Phi^{C_r}P_q \rightarrow \Phi^{C_r} (\mathrm{sd}_r \mathrm{thh}_{q}(E)) \rightarrow \mathrm{thh}_{q}(E) \]
is a weak equivalence. Since a Reedy cofibration gives a cofibration in each simplicial level, $P_q \rightarrow  \mathrm{sd}_r \mathrm{thh}_{q}(E)$ is a cofibrant replacement in $C_r$-spectra. Since $\Phi^{C_r}$ preserves equivalences between cofibrant $C_r$-spectra, we may replace $P_q$ by any cofibrant approximation of the individual orthogonal $C_r$-spectrum $ \mathrm{sd}_r \mathrm{thh}_{q}(E)$.
The cofibrant replacement we use is the homotopy colimit of a cofibrant replacement of the diagram $\Omega^{\cI}_{(q+1)r}(E)$ in the $C_r$-projective model structure on equivariant diagrams of \cite[Theorem 2.6]{DM16}. Let us denote this cofibrant replacement by $(\Omega^{\cI}_{(q+1)r}(E))^c \xrightarrow{\sim} \Omega^{\cI}_{(q+1)r}(E)$. Then we have a commuting diagram
\[ \xymatrix@R=18pt{\displaystyle
\Phi^{C_r}\hocolimsubscr_{\cI^{\times (q+1)r}} (\Omega^{\cI}_{(q+1)r}(E))^c \ar[r] 
&
\displaystyle \Phi^{C_r} \hocolimsubscr_{\cI^{\times (q+1)r}} \Omega^{\cI}_{(q+1)r}(E) 
\\
\displaystyle\hocolimsubscr_{\cI^{\times (q+1)}} \Phi^{C_r}((\Omega^{\cI}_{(q+1)r}(E))^c\circ \Delta_r) \ar@{<-}[u]^\cong \ar[r] \ar@{}[rd]^(.2){}="a"^(.8){}="b" \ar "a";"b"^\sim 
&
\displaystyle\hocolimsubscr_{\cI^{\times (q+1)}} \Phi^{C_r} (\Omega^{\cI}_{(q+1)r}(E)\circ \Delta_r)  \ar@{<-}[u]^\cong  \ar@{}[d]^(.2){}="a"^(.8){}="b"^(.5){\lambda} \ar "a";"b" 
\\
&
\hspace{-2cm} \displaystyle\hocolimsubscr_{(\bld{n_0},\dots, \bld{n_q}) \in   \cI^{\times (q+1)}} \Omega^{n_0+ \dots + n_q}\left(\Phi^{C_r}(E^{n_0}\sm \dots \sm E^{n_q})^{\sm r}\right)
\\
&
\hspace{-2cm} \displaystyle\hocolimsubscr_{(\bld{n_0},\dots, \bld{n_q}) \in \cI^{\times (q+1)}} \Omega^{n_0+ \dots + n_q}(E^{n_0}\sm \dots \sm E^{n_q})\rlap{\ .}\ar[u]^-{\delta_r}_-\cong
 }\]
We remark that by \cite[Proposition 2.10]{DM16} the values of the diagram $\Omega^{\cI}_{(q+1)r}(E)^c$ at the objects of $\cI^{\times (q+1)}\cong (\cI^{\times (q+1)r})^{C_r}$ are cofibrant orthogonal $C_r$-spectra.
To see that the diagonal map is a stable equivalence, we apply Lemma~\ref{lem:PhiG-vs-OmegaV}. It reduces the claim to showing that
$\Phi^{C_r}$ captures the homotopy type of its left derived functor when
evaluated on $(E^{n_0}\sm \dots \sm E^{n_q})^{\sm r}$. This follows from
our flatness assumptions and \cite[Theorem 3.2.14 and Proposition 4.5.14]{BDS16}, which imply that this $C_r$-spectrum is built out of induced regular $C_r$-cells in the sense of \cite[Theorem 3.2.14]{BDS16} and \cite[Proposition 3.4.25]{Stolz_equivariant}.  The composite which goes through the upper right corner is exactly the map we are interested in.\end{proof}

We are now ready to prove the first main theorem from the introduction.
\begin{proof}[Proof of Theorem~\ref{thm:comparison-in-introduction}]
Theorem~\ref{thm:THH-model-comparison} provides a chain of stable equivalences
\[ B^{\mathrm{cy}}(A) \to  \mathrm{thh}(\sh{}A) \ot \THH(A),\] 
and it remains to show that these are maps of cyclotomic spectra. The cyclotomic structure on $\mathrm{thh}(\sh{}A)$ and
$\THH(A) = \mathrm{thh}(\Sigma^{\infty}A)$ was established in Proposition~\ref{prop:pre-cyc-structures-are-cyc} (whose hypotheses were already checked in the proof of Theorem~\ref{thm:THH-model-comparison}); the naturality of this construction with respect to $E$ demonstrates that the right-hand map of our zig-zag commutes with the cyclotomic structures. Finally we observe that when the cyclotomic structure map defined at the beginning of this section is restricted to the $(\bld{0},\dots, \bld{0}) \in \cI^{\times (q+1)}$ term, it becomes just the norm diagonal, which gives the cyclotomic structure map for $B^{\mathrm{cy}}(A)$~\cite{ABG15}; this takes care of the left-hand map.
\end{proof}

\begin{remark} The final stage of the proof of Proposition \ref{prop:pre-cyc-structures-are-cyc}, and the proof of Theorem \ref{thm:THH-model-comparison}, give equivalences on every simplicial level, not just on the realization. They also do not make use of any face or degeneracy maps, and hence do not use the ring structure on $A$. Thus we have also obtained a new proof of the main result of \cite{MR3556283}: for each flat orthogonal spectrum $X$ and integer $k \geq 1$, there is a natural zig-zag of genuine equivalences of orthogonal $C_k$-spectra $X^{\sma k} \simeq \mathrm{thh}_{k-1}(\Sigma^\infty X)$. In slightly more detail, the maps of our zig-zag
\[ X^{\sma k} \to \mathrm{thh}_{k-1}(\sh{}X) \ot \mathrm{thh}_{k-1}(\Sigma^\infty X) \]
are nonequivariant equivalences by the proof of Theorem \ref{thm:THH-model-comparison}, and equivalences on derived geometric fixed points by a reduction to the nonequivariant case. The reduction is by the following commuting diagram, in which the vertical equivalences are from the last stage of the proof of Proposition \ref{prop:pre-cyc-structures-are-cyc}.
\[ \xymatrix{
\mathbb{L}\Phi^{C_r} X^{\sma rs} \ar[r] \ar[d]^-\sim & \mathbb{L}\Phi^{C_r} \mathrm{thh}_{rs-1}(\sh{}X) \ar[d]^-\sim & \mathbb{L}\Phi^{C_r} \mathrm{thh}_{rs-1}(\Sigma^\infty X) \ar[l] \ar[d]^-\sim \\
X^{\sma s} \ar[r]^-\sim & \mathrm{thh}_{s-1}(\sh{}X) & \mathrm{thh}_{s-1}(\Sigma^\infty X) \ar[l]_-\sim
\rlap{\ .}} \]
\end{remark}

\section{Comparison of TC models}\label{sec:TC-comparison}
Let $A$ be an orthogonal ring spectrum. In this section we show that under suitable assumptions on $A$, the topological cyclic homology spectrum resulting from the cyclotomic structure on $B^{\mathrm{cy}}(A)$ is equivalent to the original topological cyclic homology spectrum built from $\THH(A)$. We start by restricting our attention to the underlying $p$-cyclotomic spectra, i.e., to the fixed points of $p$-power cyclic subgroups.  

Let $p$ be a prime and let $X$ be a pre-cyclotomic spectrum in the sense of Definition~\ref{def:cyclotomic-spectra}. The cyclotomic structure maps yield $S^1$-equivariant maps 
\begin{equation}\label{eq:R-maps-via-geometric-fixed-points} R_{p^m,p^n} \colon \rho_{p^{m+n}}^*X^{C_{p^{m+n}}} \to \rho^*_{p^n}X^{C_{p^n}}
\end{equation}
defined as the composite
\[ \rho_{p^{m+n}}^*X^{C_{p^{m+n}}} \to \rho^*_{p^n}(\rho_{p^m}^*\Phi^{C_{p^m}} X)^{C_{p^n}} \to \rho^*_{p^n}X^{C_{p^{n}}}, \]
where the first map is induced by the canonical map from the fixed points to the geometric fixed points \cite[\S V.4]{MM02} and the second map is the $C_{p^n}$-fixed points of the cyclotomic structure map. We also have non-equivariant maps
\begin{equation}\label{eq:F-maps} F_{p^{m},p^{n}} \colon X^{C_{p^{m+n}}} \to X^{C_{p^n}}\end{equation}
arising from the inclusion of fixed points of a larger group to a smaller group.

Let $\mathbb I$ be the category with objects $[r]$ for every natural number $r \geq 1$ and with  morphisms generated by $R_{r,s}$ and $F_{r,s}\colon [rs] \to [s]$ subject to the relations
\[ R_{1,s} = F_{1,s} = \id_{[s]}, \quad R_{s,t}R_{r,st} = R_{rs,t}, \]
\[ R_{s,t}F_{r,st} = F_{s,t}R_{r,st}, \quad  F_{s,t}F_{r,st} = F_{rs,t} \]
(see e.g. \cite{BM16} and \cite{madsen_survey}). Let $\mathbb I_p$ be the full subcategory of $\mathbb I$ on the objects $[p^m]$ with $m\geq 0$. Then the maps~\eqref{eq:R-maps-via-geometric-fixed-points} and~\eqref{eq:F-maps} define an $\mathbb I_p$-diagram sending $[p^n]$ to $X^{C_{p^n}}$, and the  $p$-typical topological cyclic homology of $X$ is defined as \[ \TC(X;p) = \holim_{F,R} X^{C_{p^n}}.\]
This is only a homotopy invariant notion if $X$ is a $C_{p^n}$-equivariant $\Omega$-spectrum for all $n\geq 0$. To enforce this condition, we write $X \to X^{\mathrm{fib}}$ for the fibrant replacement in the model${}^*$ structure on pre-cyclotomic spectra constructed in \cite[\S 5]{BM16} and note that $X^{\mathrm{fib}}$ has the desired $\Omega$-spectrum property. In particular, we can define the topological cyclic homology of a flat orthogonal ring spectrum $A$ by the formula $\TC((B^{\mathrm{cy}}(A))^{\mathrm{fib}};p)$; see \cite[\S 3.2]{ABG15}.

We will recall in Section~\ref{subsec:classic-cyc-structure} below how $\TC(A;p)$ is defined in terms of B\"okstedt's model for $\THH$ in earlier literature on $\TC$. 

\begin{remark}
Classically, topological cyclic homology was defined under certain connectivity assumptions. In the language of \cite{LRRV17} a ring spectrum $A$ is called \textit{strictly connective} if $A_n$ is $(n-1)$-connected, \textit{very well-pointed} if $A_n$ is well-pointed and the unit $S^0 \to A_0$ is an $h$-cofibration, and \textit{convergent} if the structure map $A_n \to \Omega A_{n+1}$ is $n + \lambda(n)$-connected, where $\lambda(n) \to \infty$ when $n \to \infty$. If $A$ is strictly connective, very well-pointed and convergent, then $\THH(A)$ is a fibrant cyclotomic spectrum. In this case $\TC(\THH(A);p)$ is then equivalent to the original definition of $\TC(A;p)$.

We further observe that since in our setting $A$ is an orthogonal ring spectrum, the same conclusion holds if we just assume that $A$ is strictly connective and very well-pointed. This is because convergence is used only to invoke B\"{o}kstedt's approximation Lemma, which holds automatically if $A$ is semistable (see \cite[Proposition 2.4]{HM97}).
\end{remark}

We can now prove our second main theorem from the introduction. 
\begin{proof}[Proof of Theorem \ref{thm:TC-comparison-intro}]
Since a fibrant replacement in pre-cyclotomic spectra sends a stable equivalence between cyclotomic spectra to a stable equivalence between $C_{p^n}$-equivariant $\Omega$-spectra for all $n\geq 0$, it follows from Theorem~\ref{thm:THH-model-comparison} that the first two maps in the zig-zag
\[\TC((B^{\mathrm{cy}}(A))^{\mathrm{fib}};p) \xrightarrow{\sim}  \TC((\mathrm{thh}(\sh{}A))^{\mathrm{fib}};p) \xleftarrow{\sim} \TC((\THH(A))^{\mathrm{fib}};p) \ot \TC(A;p) \]
are stable equivalences. Now assume that $A$ is strictly connective. By \cite[Proposition 2.4]{HM97}, $\THH(A) = \mathrm{thh}(\Sigma^{\infty}A)$ is a $C_{p^n}$-$\Omega$-spectrum for all $n\geq 0$. Hence the fibrant replacement $\THH(A) \to \THH(A)^{\mathrm{fib}}$ in pre-cyclotomic spectra induces a stable equivalence on $\TC$. The final step is to check that the resulting spectrum $\TC(\THH(A);p)$ is naturally equivalent to the classical definition of $\TC(A;p)$. The only non-obvious part is that our restriction maps agree with the classical ones found in e.g. \cite{HM97}, and we do this in Proposition~\ref{prop:compatible-R-maps} below.

We now explain how we can bootstrap the equivalences for the $p$-typical $\TC$'s to equivalences of integral topological cyclic homology. We recall from \cite[\S 6.4.3]{DGM13} that for a cyclotomic spectrum $X$, Goodwillie's integral topological cyclic homology $\TC(X)$ is defined as the homotopy pullback of the diagram 
\[\xymatrix{X^{hS^1} \ar[r] &  \prod_p (\holim_{F}X^{hC_{p^n}})^{\sm}_p & \ar[l] \prod_p \TC(X;p)^{\sm}_p} \]
where $p$ varies through all primes. The homotopy limit is taken over the fixed-points inclusions induced by $C_{p^n}\subset C_{p^{n+1}}$ and it is equivalent to $X^{hS^1}$ after $p$-completion (see \cite[Lemma 6.3.1.1]{DGM13}). The right pointing map is induced by the fixed-points inclusions of $C_{p^n}\subset S^1$ and by the maps into the $p$-completions. The left pointing map is the product of the completions of the maps
\[
\TC(X;p)=\holim_{F,R} X^{C_{p^n}}\longrightarrow \holim_{F} X^{C_{p^n}}\longrightarrow \holim_{F} X^{hC_{p^n}}
\]
where the first map restricts the limit to the subcategory generated by the maps $F$ and the second map is the limit of the forgetful maps from fixed-points to homotopy fixed-points.
 Since this pullback diagram is natural with respect to morphisms of cyclotomic spectra the zig-zag above extends to a zig-zag on the homotopy pullbacks
\[\TC((B^{\mathrm{cy}}(A))^{\mathrm{fib}}) \xrightarrow{\sim}  \TC((\mathrm{thh}(\sh{}A))^{\mathrm{fib}}) \xleftarrow{\sim} \TC((\THH(A))^{\mathrm{fib}}) \ot \TC(A), \]
where the third map is an equivalence when $A$ is strictly connective.
\end{proof}

\subsection{Equivalence of restriction maps}\label{subsec:classic-cyc-structure}

It remains to check that our method for defining restriction maps gives the same result as the more classical method. For this purpose we drop the prime $p$ and notice that the definition of the restriction maps in~\eqref{eq:R-maps-via-geometric-fixed-points} works for any cyclic group $C_r$. Applying this to $\THH(A) = \mathrm{thh}(\Sigma^{\infty}A)$ with the cyclotomic structure maps from~\eqref{eq:restriction-on-thhEq} provides maps 
\begin{equation}\label{eq:R-maps-on-THH-from-geom-fixed-points} R_{r,s}\colon \rho_{rs}^*\mathrm{thh}(\Sigma^{\infty}A)^{C_{rs}} \to \rho_{s}^*\mathrm{thh}(\Sigma^{\infty}A)^{C_s}.
\end{equation}
To compare these to the analogous maps from \cite{HM97}, we recall the construction from \cite{HM97}; see also \cite{DGM13} and \cite{LRRV17}.  Consider for each $n$ the map of cyclic spaces which in simplicial level $q$ is defined as
\begin{equation}\label{eq:HM-cyclotomic-structure}
\begin{split}
&\; {(\mathrm{sd}_r \mathrm{thh}_{\bullet}(\Sigma^\infty A))}_q(\mathbb{R}^n)^{C_r}
\xrightarrow{=} (\hocolimsubscr_{\cI^{\times (q+1)r}}\, \Omega^{\cI}_{(q+1)r}(\Sigma^\infty A)(\mathbb{R}^n))^{C_r}
\\
\xrightarrow{\cong}&\; \hocolimsubscr_{\cI^{\times (q+1)}}\, ((\Omega^{\cI}_{(q+1)r}(\Sigma^\infty A) \circ \Delta_r)(\mathbb{R}^n))^{C_r} \\  
\xrightarrow{=}&\; \hocolimsubscr_{(\bld{n_0},\dots, \bld{n_q}) \in \cI^{\times (q+1)}} (\Omega^{r(n_0+ \dots + n_q)}\left((A_{n_0}\sm \dots \sm A_{n_q} )^{\sm r} \wedge S^n \right))^{C_r}\\ 
\overset{\chi}\to &\; \hocolimsubscr_{(\bld{n_0},\dots, \bld{n_q}) \in \cI^{\times (q+1)}} \Omega^{n_0+ \dots + n_q}\left(((A_{n_0}\sm \dots \sm A_{n_q})^{\sm r})^{C_r} \wedge S^n \right)\\ 
\xrightarrow{\cong}&\; \hocolimsubscr_{(\bld{n_0},\dots, \bld{n_q}) \in \cI^{\times (q+1)}} \Omega^{n_0+ \dots + n_q}(A_{n_0}\sm \dots \sm A_{n_q} \wedge S^n) \\  
\xrightarrow{=}&\;\mathrm{thh}_{q}(\Sigma^\infty A)(\mathbb{R}^n)\ .
\end{split}
\end{equation}
The unlabeled maps are the canonical commutation of fixed points with homotopy colimits and the unique natural isomorphism of spaces $(B^{\sma r})^{C_r} \cong B$. The map $\chi \colon (\Omega^V X)^G \to \Omega^{V^G} X^G$ restricts each equivariant map out of a sphere to the fixed points of that sphere. Of course, as $n$ varies these fit together into a map of orthogonal spectra.

\begin{remark} \label{notcyclic} It is essential that the sphere $S^n$ in~\eqref{eq:HM-cyclotomic-structure} is a trivial $S^1$-representation sphere. The analogous map 
\[{(\mathrm{sd}_r \mathrm{thh}_{\bullet}(\Sigma^\infty A))}_q(V)^{C_r} \to \mathrm{thh}_{q}(\Sigma^\infty A)(V^{C_r})\]
considered in \cite[2.5]{HM97} for a nontrivial $S^1$-representation sphere $S^V$ is simplicial. However, the obvious residual actions of $C_{q+1} \cong C_{(q+1)r}/C_r$ on 
\begin{align*}&{(\mathrm{sd}_r \mathrm{thh}_{\bullet}(\Sigma^\infty A))}_q(V)^{C_r}  \\  \cong& \hocolimsubscr_{(\bld{n_0},\dots, \bld{n_q}) \in \cI^{\times (q+1)}} (\Omega^{r(n_0+ \dots + n_q)}\left((A_{n_0}\sm \dots \sm A_{n_q} )^{\sm r} \wedge S^V \right))^{C_r}\end{align*}
for all $q\geq 0$ do not define a cyclic object. For example the cyclic identities $d_0t_q=d_q$ fail in general since the left hand side involves a potentially non-trivial action on $S^V$ and the right hand side does not. However, one can still show that the map is $S^1$-equivariant after taking geometric realizations with respect to the diagonal $S^1$-action on the source.  This follows by checking that the realized map is $C$-equivariant for any finite $C \leq S^1$ using subdivisions and then observing that $\mathbb{Q}/\mathbb{Z}$ is dense in $S^1$. We thank Amalie H\o genhaven for pointing out this argument to us.
\end{remark}

After geometric realization, the composite in~\eqref{eq:HM-cyclotomic-structure} provides an $S^1$-equivariant map of orthogonal spectra on the trivial universe,
\[\rho_r^*\mathrm{thh}(\Sigma^{\infty}A)^{C_r} \to \mathrm{thh}(\Sigma^{\infty}A).  \]
By further taking $C_s$-fixed points we also get an $S^1$-map 
\begin{equation}\label{eq:R-maps-on-THH-from-fixed-points} \overline{R}_{r,s}\colon  \rho_{rs}^*\mathrm{thh}(\Sigma^{\infty}A)^{C_{rs}} \to \rho_{s}^*\mathrm{thh}(\Sigma^{\infty}A)^{C_s}.\end{equation}

\begin{proposition}\label{prop:compatible-R-maps} The restriction maps $\rho_{rs}^*\mathrm{thh}(\Sigma^{\infty}A)^{C_{rs}} \to \rho_{s}^*\mathrm{thh}(\Sigma^{\infty}A)^{C_s}$ defined in \eqref{eq:R-maps-on-THH-from-geom-fixed-points} and \eqref{eq:R-maps-on-THH-from-fixed-points} coincide. 
\end{proposition}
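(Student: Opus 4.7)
The approach is to reduce the claimed equality from realizations down to a pointwise comparison in each simplicial degree, and from there to a tautological identification for suspension spectra. Both maps are $C_s$-fixed points of an $S^1$-equivariant map produced from a (simplicial, in the sense of Remark~\ref{notcyclic}) construction, so by naturality of $(-)^{C_s}$ it suffices to treat $s=1$, i.e.\ to compare the two maps $\rho_r^*\mathrm{thh}(\Sigma^\infty A)^{C_r} \to \mathrm{thh}(\Sigma^\infty A)$. Both are realizations of simplicial maps of orthogonal spectra, so the comparison reduces to checking equality in each simplicial degree $q$. There I would commute $C_r$-fixed points (respectively $C_r$-geometric fixed points) past the homotopy colimit over $\cI^{\times(q+1)r}$ using Lemma~\ref{lem:hocolim-PhiG-interchange} and its genuine fixed-point analogue. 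After this step, both composites factor through $\hocolim$ over $\cI^{\times(q+1)}$ along the diagonal $\Delta_r$, and the equality reduces to a pointwise comparison of two maps
\[ (\Omega^{r(n_0+\cdots+n_q)}((\Sigma^\infty A_{n_0}\sm\cdots\sm A_{n_q})^{\sm r}))^{C_r} \longrightarrow \Omega^{n_0+\cdots+n_q}(\Sigma^\infty A_{n_0}\sm\cdots\sm A_{n_q}). \]

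The key identification is that on suspension spectra, genuine and geometric $C_r$-fixed points coincide. Writing $B = A_{n_0}\sm\cdots\sm A_{n_q}$, I would check that the diagonal isomorphism $\Sigma^\infty B \xrightarrow{\iso} \Phi^{C_r}(\Sigma^\infty B^{\sm r})$ of Proposition~\ref{prop:diagonal-iso} agrees with the composite of $\Sigma^\infty$ applied to the space-level untwisting homeomorphism $B \iso (B^{\sm r})^{C_r}$ followed by the canonical monoidal isomorphism $\Sigma^\infty((-)^{C_r}) \iso \Phi^{C_r}(\Sigma^\infty -)$; this follows from the uniqueness of such monoidal transformations recorded in \cite[Theorem 3.20 and Remark 3.21]{Mal17}, together with Proposition~\ref{prop:properties-geom-fixed}\eqref{it:geom-fixed-points-vs-suspension}. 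Under this identification the canonical map from fixed points to geometric fixed points becomes the identity, and the interchange $\lambda$ of \eqref{eq:Phi-G-vs-loops} reduces to the classical map $\chi\colon (\Omega^V X)^G \to \Omega^{V^G}X^G$ appearing in \eqref{eq:HM-cyclotomic-structure}. The inverse of the diagonal isomorphism collapses to the space-level untwisting. Assembling these identifications, the composite \eqref{eq:R-maps-on-THH-from-geom-fixed-points} equals \eqref{eq:R-maps-on-THH-from-fixed-points} as required.

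The main obstacle will be the careful bookkeeping of interchange isomorphisms between $\Phi^{C_r}$, $\Omega^V$, $\Sigma^\infty$, and smash products, together with switching consistently between the relative and absolute versions of $\Phi^{C_r}$ needed for the residual $S^1/C_r$-action. A useful preliminary simplification is to observe that all maps in sight are natural in $B$, so the diagram chase can be done universally for a single based $C_r$-space $B$ with the cyclic permutation action, which substantially cuts down the combinatorial complexity.
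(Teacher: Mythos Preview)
Your proposal is correct and follows essentially the same route as the paper's proof: reduce to $s=1$, pass to each simplicial level, commute fixed points (categorical and geometric) past the homotopy colimit to land on the diagonal of $\cI^{\times(q+1)}$, and then verify the three compatibilities that make the two composites agree pointwise. The paper organizes these as three ``observations about $R$'': that $R$ commutes with the hocolim interchange, that $R$ intertwines $\chi$ with $\lambda$, and that $R$ coincides with the diagonal isomorphism on $\Sigma^\infty(B^{\wedge r})$; your formulation (``under the identification of genuine and geometric fixed points on suspension spectra, $R$ becomes the identity, $\lambda$ reduces to $\chi$, and the diagonal collapses to the space-level untwisting'') is a repackaging of exactly the same content.
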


\begin{proof}
It is enough to show that $R_{r,1}=\overline{R}_{r,1}$. For this we recall the canonical map $R$ from fixed points to geometric fixed points from \cite[Section V.4]{MM02} in more detail. For any orthogonal $C_r$-spectrum $X$, let $X^{C_r}$ denote the point-set level categorical fixed points. One has a natural morphism of orthogonal spectra 
\[ R\colon X^{C_r} \to  \Phi^{C_r}X=\int^{V} \mathcal{L}_S(V^{C_r}, -) \wedge X(V)^{C_r}, \]
where $\int^V$ is shorthand for the coequalizer from~\eqref{geometric_fixed_points_definition}.
The map $R$ sends $x \in X(\mathbb{R}^n)^{C_r}$ to the equivalence class of $\mathrm{id} \wedge x \in \mathcal{L}_S(\mathbb{R}^n, \mathbb{R}^n) \wedge X(\mathbb{R}^n)^{C_r}$.

In order to prove $R_{r,1}=\overline{R}_{r,1}$ it suffices to show that the composite of $R$ with the cyclotomic structure map of this paper
\[{(\mathrm{sd}_r \mathrm{thh}_{\bullet}(\Sigma^\infty A))}_q(\mathbb{R}^n)^{C_r} \overset{R}\to \Phi^{C_r} {(\mathrm{sd}_r \mathrm{thh}_{\bullet}(\Sigma^{\infty}A))}_q(\mathbb{R}^n) \to \mathrm{thh}_{q}(\Sigma^\infty A)(\mathbb{R}^n)\]
coincides with the cyclic map~\eqref{eq:HM-cyclotomic-structure}. This follows from several general observations about $R$. The first observation is that it commutes with the interchange of fixed points with homotopy colimits in the following way: for each $C_r$-diagram $Z$ over a $C_r$-category $\mathcal C$ the diagram
\[ \xymatrix @C=5em{
\hocolimsubscr_{\mathcal C^{C_r}}\, (Z\circ\Delta)^{C_r} \ar[r]^-{\hocolim\, R} \ar[d]^-\cong & \hocolimsubscr_{\mathcal C^{C_r}}\, \Phi^{C_r} (Z\circ\Delta)  \ar[d]^-\cong \\ (\hocolimsubscr_{\mathcal C}\, Z)^{C_r} \ar[r]^-R
 & \Phi^{C_r} \hocolimsubscr_{\mathcal C}\, Z}\]
 commutes, where $\Delta\colon\mathcal{C}^{C_r}\to\mathcal{C}$ is the inclusion of fixed-points.

The second observation is that on each spectrum level $n$ we have a commutative diagram 
\[ \xymatrix{{(\Omega^V X)(\mathbb{R}^n)}^{C_r} \ar[d]^-\chi \ar[r]^-R & \Phi^{C_r}(\Omega^V X)(\mathbb{R}^n) \ar[d]^-\lambda \\ \Omega^{V^{C_r}} (X(\mathbb{R}^n)^{C_r}) \ar[r]^-R & \Omega^{V^{C_r}} \Phi^{C_r}X(\mathbb{R}^n) }\]
where $\chi$ restricts an equivariant map to the fixed-points. This can be verified directly, by describing $\lambda$ as the map induced by the composite
\[
\bigvee_{W} \mathcal L_S\sm (\Omega^V X(W))^N\stackrel{\chi}{\longrightarrow} \bigvee_{W} \mathcal L_S \sm \Omega^{V^{N}} (X(W)^N)\longrightarrow \Omega^{V^{N}}\bigvee_{W} \mathcal L_S\sm  (X(W)^N)
\]
where $\mathcal L_S=\mathcal L_S(W^N,-)$ and the second map is the composite of the assembly of the loop functor and the canonical map that commutes a functor with a coproduct.
The last observation is that $R$ coincides with the diagonal isomorphism on the spectrum $\Sigma^{\infty}(B^{\wedge r})$ for any cofibrant space $B$, for instance using \cite[Theorem~1.2]{Mal17}.
\end{proof}

\begin{remark} The remaining steps that match the maps $\overline{R}_{r,s}$ with those in classical models of $\TC$ are straightforward translations between different models of spectra. For instance, in~\cite{HM97} Hesselholt and Madsen assume $A$ is a continuous functor instead of an orthogonal spectrum. But since they only use the symmetric spectrum structure of $A$, this poses no issue. Furthermore, they take the underlying $S^1$-prespectrum $t$ (indexed by $\mathcal U$) of the orthogonal $S^1$-spectrum $\THH(A)$ we defined here, and apply a thickening $t'$ and spectrification functor to get an $S^1$-equivariant spectrum $T$, in the sense of \cite{LMS}. But they prove that the spectrification map induces an equivalence on the $C$-fixed points of every representation level (\cite[Proposition 2.4]{HM97}, see also \cite{LRRV17}), and therefore the homotopy limit that defines $\TC$ takes this spectrification map to an equivalence of prespectra. Translating their definition of $\overline{R}$ back to the thickening $t'$ by restricting to the first term of the colimit systems for $T^{C_r}$ and $\Phi^{C_r} T$, translating further to $t$, and finally restricting to the trivial representation levels (since those are the only levels we use to define $\TC$), we get the map $t^{C_r}(\R^n) \to t((\R^n)^{C_r}) = t(\R^n)$ given by \eqref{eq:HM-cyclotomic-structure}.
In summary, the notion of $\TC$ defined using the maps $\overline{R}_{r,s}$ above is naturally equivalent to the one considered in \cite{HM97}.
\end{remark}

\begin{bibdiv}
\begin{biblist}

\bib{ABG15}{article}{
      author={Angeltveit, V.},
      author={Blumberg, Andrew~J.},
      author={Gerhardt, T.},
      author={Hill, Michael A.},
      author={Lawson, T.},
      author={Mandell, M.},
      title={Topological cyclic homology via the norm},
      journal={Doc. Math.},
      volume={23},
      date={2018},
      pages={2101--2163},
}

\bib{MR3556283}{article}{
   author={Angeltveit, Vigleik},
   author={Blumberg, Andrew J.},
   author={Gerhardt, Teena},
   author={Hill, Michael A.},
   author={Lawson, Tyler},
   title={Interpreting the B\"okstedt smash product as the norm},
   journal={Proc. Amer. Math. Soc.},
   volume={144},
   date={2016},
   number={12},
   pages={5419--5433},
   issn={0002-9939},
}

\bib{BM12}{article}{
   author={Blumberg, Andrew J.},
   author={Mandell, Michael A.},
   title={Localization theorems in topological Hochschild homology and
   topological cyclic homology},
   journal={Geom. Topol.},
   volume={16},
   date={2012},
   number={2},
   pages={1053--1120},
   issn={1465-3060},
}

\bib{BM16}{article}{
   author={Blumberg, Andrew J.},
   author={Mandell, Michael A.},
   title={The homotopy theory of cyclotomic spectra},
   journal={Geom. Topol.},
   volume={19},
   date={2015},
   number={6},
   pages={3105--3147},
   issn={1465-3060},
}

\bib{Boekstedt_THH}{article}{
  author = {B\"okstedt, M.},
  title ={Topological {H}ochschild Homology},
  note = {Preprint, Bielefeld},
  year = {1985},
}


\bib{BHM93}{article}{
   author={B\"okstedt, M.},
   author={Hsiang, W. C.},
   author={Madsen, I.},
   title={The cyclotomic trace and algebraic $K$-theory of spaces},
   journal={Invent. Math.},
   volume={111},
   date={1993},
   number={3},
   pages={465--539},
   issn={0020-9910},
}

\bib{BCD}{article}{
  title={Covering homology},
  author={Brun, Morten},
  author={Carlsson, Gunnar},
  author={Dundas, Bj{\o}rn Ian},
  journal={Advances in Mathematics},
  volume={225},
  number={6},
  pages={3166--3213},
  year={2010},
  publisher={Elsevier}
}

\bib{BDS16}{misc}{
  author={Brun, Morten},
  author={Dundas, Bj\o rn Ian},
  author={Stolz, Martin},
  title={Equivariant Structure on Smash Powers},
  date={2016},
  note={\arxivlink{1604.05939}},
}

\bib{DMPP}{misc}{
   author={Dotto, Emanuele},
   author={Moi, Kristian},
   author={Patchkoria, Irakli},
   author={Precht Reeh, Sune},
   title={Real topological Hochschild homology},
   date={2017},
   note={\arxivlink{1711.10226}}
}

\bib{DM16}{article}{
   author={Dotto, Emanuele},
   author={Moi, Kristian},
   title={Homotopy theory of $G$-diagrams and equivariant excision},
   journal={Algebr. Geom. Topol.},
   volume={16},
   date={2016},
   number={1},
   pages={325--395},
   issn={1472-2747},
}

\bib{DGM13}{book}{
   author={Dundas, Bj\o rn Ian},
   author={Goodwillie, Thomas G.},
   author={McCarthy, Randy},
   title={The local structure of algebraic K-theory},
   series={Algebra and Applications},
   volume={18},
   publisher={Springer-Verlag London, Ltd., London},
   date={2013},
   pages={xvi+435},
   isbn={978-1-4471-4392-5},
   isbn={978-1-4471-4393-2},
}

\bib{HM97}{article}{
   author={Hesselholt, Lars},
   author={Madsen, Ib},
   title={On the $K$-theory of finite algebras over Witt vectors of perfect
   fields},
   journal={Topology},
   volume={36},
   date={1997},
   number={1},
   pages={29--101},
   issn={0040-9383},
}

\bib{HHR}{article}{
   author={Hill, M. A.},
   author={Hopkins, M. J.},
   author={Ravenel, D. C.},
   title={On the nonexistence of elements of Kervaire invariant one},
   journal={Ann. of Math. (2)},
   volume={184},
   date={2016},
   number={1},
   pages={1--262},
   issn={0003-486X},
}

\bib{Hovey-general}{article}{
   author={Hovey, Mark},
   title={Spectra and symmetric spectra in general model categories},
   journal={J. Pure Appl. Algebra},
   volume={165},
   date={2001},
   number={1},
   pages={63--127},
   issn={0022-4049},
}
\bib{HSS00}{article}{
   author={Hovey, Mark},
   author={Shipley, Brooke},
   author={Smith, Jeff},
   title={Symmetric spectra},
   journal={J. Amer. Math. Soc.},
   volume={13},
   date={2000},
   number={1},
   pages={149--208},
   issn={0894-0347},
}
		
\bib{Kro_involutions}{misc}{
      author={Kro, Tore},
       title={Involutions on $S[\Omega(M)]$},
        date={2005},
        note={Ph.D. Thesis, University of Oslo \arxivlink{math/0510221}},
}

\bib{LMS}{book}{
   author={Lewis, L. G., Jr.},
   author={May, J. P.},
   author={Steinberger, M.},
   author={McClure, J. E.},
   title={Equivariant stable homotopy theory},
   series={Lecture Notes in Mathematics},
   volume={1213},
   note={With contributions by J. E. McClure},
   publisher={Springer-Verlag, Berlin},
   date={1986},
   pages={x+538},
   isbn={3-540-16820-6},
}

\bib{LRRV17}{article}{
   author={L\"uck, Wolfgang},
   author={Reich, Holger},
   author={Rognes, John},
   author={Varisco, Marco},
   title={Algebraic K-theory of group rings and the cyclotomic trace map},
   journal={Adv. Math.},
   volume={304},
   date={2017},
   pages={930--1020},
   issn={0001-8708},
}

\bib{madsen_survey}{article}{
   author={Madsen, Ib},
   title={Algebraic $K$-theory and traces},
   conference={
      title={Current developments in mathematics, 1995 (Cambridge, MA)},
   },
   book={
      publisher={Int. Press, Cambridge, MA},
   },
   date={1994},
   pages={191--321},
}

\bib{Mal17}{article}{
   author={Malkiewich, Cary},
   title={Cyclotomic structure in the topological Hochschild homology of
   $DX$},
   journal={Algebr. Geom. Topol.},
   volume={17},
   date={2017},
   number={4},
   pages={2307--2356},
   issn={1472-2747},
}

\bib{MMSS}{article}{
   author={Mandell, M. A.},
   author={May, J. P.},
   author={Schwede, S.},
   author={Shipley, B.},
   title={Model categories of diagram spectra},
   journal={Proc. London Math. Soc. (3)},
   volume={82},
   date={2001},
   number={2},
   pages={441--512},
   issn={0024-6115},
}

\bib{MM02}{article}{
   author={Mandell, M. A.},
   author={May, J. P.},
   title={Equivariant orthogonal spectra and $S$-modules},
   journal={Mem. Amer. Math. Soc.},
   volume={159},
   date={2002},
   number={755},
   pages={x+108},
   issn={0065-9266},
}

\bib{alaska}{book}{
  title={Equivariant homotopy and cohomology theory: {D}edicated to the memory of {R}obert {J}. {P}iacenza},
  author={May, J. P.},
  author={Piacenza, R. J.},
  author={Cole, M.},
  number={91},
  year={1996},
  publisher={American Mathematical Soc.}
}

\bib{NS17}{article}{
  author={Nikolaus, Thomas},
  author={Scholze, Peter},
  title={On topological cyclic homology},
  journal={Acta Math.},
  volume={221},
  date={2018},
  number={2},
  pages={203--409},
}

\bib{MR3513565}{article}{
   author={Patchkoria, Irakli},
   author={Sagave, Steffen},
   title={Topological Hochschild homology and the cyclic bar construction in
   symmetric spectra},
   journal={Proc. Amer. Math. Soc.},
   volume={144},
   date={2016},
   number={9},
   pages={4099--4106},
   issn={0002-9939},
}

\bib{Schlichtkrull_Thom-symmetric}{article}{
   author={Schlichtkrull, Christian},
   title={Thom spectra that are symmetric spectra},
   journal={Doc. Math.},
   volume={14},
   date={2009},
   pages={699--748},
   issn={1431-0635},
}

\bib{Schwede_global}{book}{
   author={Schwede, Stefan},
   title={Global homotopy theory},
   series={New Mathematical Monographs},
   volume={34},
   publisher={Cambridge University Press, Cambridge},
   date={2018},
   pages={xviii+828},
   isbn={978-1-108-42581-0},
}

\bib{SS02}{article}{
   author={Schwede, Stefan},
   author={Shipley, Brooke E.},
   title={Algebras and modules in monoidal model categories},
   journal={Proc. London Math. Soc. (3)},
   volume={80},
   date={2000},
   number={2},
   pages={491--511},
   issn={0024-6115},
}

\bib{MR1740756}{article}{
   author={Shipley, Brooke},
   title={Symmetric spectra and topological Hochschild homology},
   journal={$K$-Theory},
   volume={19},
   date={2000},
   number={2},
   pages={155--183},
   issn={0920-3036},
}

\bib{solomon}{article}{
   author={Solomon, Afework},
   title={A note on Kieboom's pullback theorem for cofibrations},
   language={English, with French summary},
   journal={Cah. Topol. G\'eom. Diff\'er. Cat\'eg.},
   volume={49},
   date={2008},
   number={2},
   pages={129--141},
   issn={1245-530X},
}

\bib{Stolz_equivariant}{misc}{
      author={Stolz, Martin},
       title={Equivariant structure on smash powers of commutative ring
  spectra},
        date={2011},
        note={Ph.D. Thesis, University of Bergen},
}

\end{biblist}
\end{bibdiv}

\end{document}